\titleformat*{\section}{\bf\Large\center}
\newcommand{\GG}[1]{}
\theoremstyle{definition}
\newtheorem*{theorem*}{Theorem}
\newtheorem{theorem}{Theorem}
\newtheorem*{rmk*}{Remark}
\newtheorem{lemma}{Lemma}
\newtheorem*{corollary*}{Corollary}
\def\pr{\mathbb{P}}
\def\CV{\text{CV}}
\def\IC{\text{IC}}
\def\UE{\text{UE}}
\def\opt{\text{opt}}
\def\PE{\text{PE}}
\apptocmd{\sloppy}{\hbadness 10000\relax}{}{} 
\DeclareMathOperator*{\argmin}{arg\,min}
\def\ind{\begin{picture}(9,8)
         \put(0,0){\line(1,0){9}}
         \put(3,0){\line(0,1){8}}
         \put(6,0){\line(0,1){8}}
         \end{picture}
        }
\def\Var{\mathbb{V}}
\def\I{\mathbbm{1}}
\newcommand{\V}{\mathbb{V}}
\newcommand{\E}{\mathbb{E}}
\newcommand{\trainingset}{\mathcal{T}}
\newcommand{\loss}{\mathcal{L}}
\newcommand{\asympprob}{\overset{\pr}{\asymp}}
\newcommand{\F}{{\cal F}}
\numberwithin{equation}{section}
\newcommand{\Smiley}{\odot}
\def\newchange{\color{black}}
\providecommand{\NOOP}[1]{}
\begin{document}

\allowdisplaybreaks
\singlespacing

\title{\bf \large  A Multi-resolution Theory for Approximating Infinite-$p$-Zero-$n$:
Transitional Inference, Individualized Predictions, and a World Without Bias-Variance Trade-off}
\author{Xinran Li and Xiao-Li Meng
\footnote{Xinran Li is Assistant Professor, Department of Statistics, University of Illinois, Champaign, IL 61820 (e-mail: \href{mailto:xinranli@illinois.edu}{xinranli@illinois.edu}). 
Xiao-Li Meng is Whipple V.N. Jones Professor of Statistics, Harvard University, Cambridge, MA 02138 (e-mail: \href{mailto:meng@stat.harvard.edu}{meng@stat.harvard.edu}).}
}
\date{}
\maketitle
\begin{abstract}
	{\newchange \textit{Transitional inference} is an empiricism concept, rooted and practiced in clinical medicine since ancient Greece. Knowledge and experiences gained from treating one entity (e.g., a disease or a group of patients) are applied to treat a related but distinctively different one (e.g., a similar disease or a new patient). This notion of ``transition to the similar'' renders individualized treatments an operational meaning, yet its theoretical foundation defies the familiar inductive inference framework. The uniqueness of entities  is the result of potentially an infinite number of attributes (hence $p=\infty$), which entails zero direct training sample size (i.e., $n=0$) because genuine guinea pigs do not exist.} However, the literature on wavelets and on sieve methods for non-parametric estimation suggests a principled \textit{approximation} theory for transitional inference  via a multi-resolution (MR) perspective, where we use the resolution level to index the degree of approximation to ultimate individuality \citep{meng2014trio}. MR inference seeks a primary resolution indexing an \textit{indirect} training sample, which provides enough matched attributes to increase the relevance of the results to the target individuals and yet still accumulate sufficient indirect sample sizes for robust estimation.  Theoretically, MR inference relies on an infinite-term ANOVA-type decomposition, providing an alternative way to model sparsity via the decay rate of the resolution bias as a function of the primary resolution level. Unexpectedly, this decomposition reveals a world without variance when the outcome is a  deterministic function of potentially infinitely many predictors. In this deterministic  world, the optimal resolution prefers over-fitting in the traditional sense when the resolution bias decays sufficiently rapidly. 
	Furthermore, there can be many ``descents'' in the prediction error curve, when the contributions of predictors are inhomogeneous and the ordering of their importance does not align with the order of their inclusion in prediction. These findings may hint at a deterministic approximation theory for understanding the apparently over-fitting resistant phenomenon of some over-saturated models in machine learning. 
\end{abstract}

{\bf Keywords}: Double descent; Machine Learning; Multiple descents; Personalized medicine; Sieve methods; Sparsity; 
Transition
to the Similar; Wavelets.  

\section{Motivation and Resolution}
\subsection{Individualized predictions and transitional inference}\label{sec:unique}
Predicting an individual's outcome, such as for personalized medicine, is an alluring proposition. Who would not want to know how a treatment would work for \textit{me} before 
such treatment even begins?
But in order to test the effectiveness of a treatment, we will need some guinea pigs. But who can approximate \textit{me}?  Someone with my genetic profiles, age, diet, exercise habit, and medical history?  But how detailed should the medical history be? What about family medical history? And how extended should my ``family" be? 

The arrival of Big Data permits us to look into such questions at deeper levels than before, but it does not make our job easier in any fundamental way. Finding a proxy population to approximate an individual is inherently an ill-defined problem from a mathematical perspective, since each of us is defined by an essentially infinite number of attributes, denoted by $p=\infty$. The implied uniqueness of ``me" then renders 
$n=0$, that is, there will never be any genuine guinea pig for me.  {\newchange Epistemologically, this need of ``transition to the similar" has been pondered by philosophers from Galen to Hume \citep[e.g., see ][]{hankinson1987causes,hankinson1995growth}. For example, Galen, a physician and philosopher in the Roman Empire, wrote (see \citet{hankinson1987causes}): 
\begin{quote}
``In cases in which there is no history, or in which there is none of sufficient similarity, there is not much hope. And the same thing is true in the case of transference of one remedy from one ailment to another similar to it: one has a greater or smaller basis for expectation of success in proportion to the increase or decrease in similarity of the ailment, whether or not history is involved. And the same goes for the transference from one part of the body to another part: expectation of success varies in direct proportion to the similarity." \end{quote}

Galen's framing is essentially a statistical one, with a nice blend of Bayesian (the reliance on history) and frequentist (the emphasis on proportions regardless of history), albeit long before any of these qualifying terms was invented. Perhaps it is a surprise then that, to the best of our knowledge, there is no statistical theory for this kind of \textit{transitional inference} \citep{hankinson1995growth}.  We surmise that this absence is largely due to the fact that transitional inference goes outside of our traditional inductive framework since it is not about inferring a population from samples of individuals, but rather about predicting individuals' outcomes by learning from a proxy population.  The notion of \textit{similarity}, central to transitional inference, is also a challenging one to metricize in general.}

However, the concept of multi-resolution (MR) analysis in engineering and applied mathematics, such as wavelets \cite[see][]{meyer1993wavelets,daubechies1992ten}, turns out to be rather useful for establishing such a theoretical framework. For wavelets, variations in data are first decomposed according to their resolution levels. For image data, the resolution level is the pixel resolution as we ordinarily define, and the concept of multi-resolution can be easily visualized by the common practice of zooming in and out when taking pictures.  Zooming too much or too little both would result in losing seeing the big picture, figuratively and literally. Our central task is then to identify a suitable \textit{primary resolution} to separate signals (i.e., lower-resolution wavelet coefficients) from noise (i.e., higher-resolution wavelet coefficients); see \citet{donoho1995wavelet} and especially \citet{johnstone2011gaussian} for a survey. 
The choice of primary resolution thus determines the unit of our inference, that is, the degree of individualization. 
The search for the primary resolution is generally a quest for an age-old bias-variance trade-off: estimating more precisely a less relevant individual assessment versus estimating less precisely a more relevant one.

Because the MR framework permits the resolution level to be potentially infinite, it can also be viewed as the predictive counterpart of the estimation method of sieves for dealing with infinite-dimension models. In order to reveal as early as possible what this framework can offer, we follow a reviewer's suggestion to defer a literature review and comparison with the standard large-$p$-small-$n$ framework to the end of our article.

\subsection{A fundamental resolution decomposition}\label{sec:identity}
To set up our MR framework, we consider an outcome variable $Y$ sharing the same probability space $(\Omega, \F, P)$ as an information filtration $\{\F_r, r=0, 1, ..., \}$, where $\F_{r-1}\subset \F_{r}$,  and $r$  indexes  our resolution level.  Here $\F_0$ corresponds to a population of interest (e.g., those who are infected by a 
certain virus) from where target individuals come, and $\F_{\infty}=\cup_{r=0}^\infty \F_r$ permits us to define (unique) individuality. For example, $\F_r$ is the $\sigma$-field generated by covariates $\{X_0, X_1, \ldots, X_r\}$, and hence determining  the primary resolution is the same as determining how many covariates should be used for predicting $Y$ for a given information filtration (see Section~\ref{sec:ordering} for the issue of ordering the covariates).  
Let $\E(\cdot)$ and $\V(\cdot)$ denote mean and variance, respectively.
Denote $\mu_r=\E[Y|\F_r]$ and  $\sigma_r^2=\V[Y|\F_r]$ for all $r$'s, including $r=0$ and $r=\infty$ (and assume these are well defined). Then by repeatedly applying the iterative law $\V[Y|\F_r]=\E[\V( Y|\F_{s})|\F_r]+\V[\E( Y|\F_{s})|\F_{r}]$,
where $s>r$,  we have the usual ANOVA decomposition \citep{meng2014trio}, 
\begin{equation}\label{eq:keyi}
\sigma^2_r=\E[\sigma^2_{\infty}|\F_r]+\sum_{i=r}^{\infty}\E[(\mu_{i+1}-\mu_i)^2|\F_r], \quad {\rm for\ any }\  r\ge 0.
\end{equation}

Decomposition \eqref{eq:keyi} reminds us that the usual dichotomy between  \textit{variance}, as a measure of random variations,  and \textit{bias}, as a measure of systematic differences, is an artificial one, except possibly at the infinite  resolution level. That is, the variance at any particular resolution level is merely the accumulation of all the (squared consecutive) systematic differences,  i.e., biases, at higher resolution levels, plus $\sigma^2_\infty$, the \textit{intrinsic variance}.  Conceptually $\sigma^2_\infty$ cannot be ascertained from any empirical data, because we can never be sure whether the residual variance from whatever model we fit is due to $\sigma^2_\infty$ or to a limitation of our always finite amount of data. It therefore seems inconsequential to set $\sigma_\infty^2=0$ 
since we can never prove it false. This proposition should be particularly acceptable to those who believe that the world is ultimately deterministic once all its operating mechanisms are measured and understood 
\citep[e.g., see][]{peat2002certainty}. 

However, as we shall reveal in this article, whether or not to set $\sigma_\infty^2$ to zero has profound implications on the bias-variance trade-off phenomenon. To the best of our knowledge, the statistical literature has not investigated this phenomenon for chaotic dynamic systems \citep[e.g.,][]{devaney2018introduction}, since when $\sigma_\infty^2=0$, the setup here enters the realm of deterministic but potentially chaotic systems. The corresponding findings therefore may be counter-intuitive (initially) to statisticians, but they might provide a bridge to the growing literature in machine learning that casts doubts on the applicability of bias-variance trade-off, especially the literature surrounding the phenomenon of ``double descent" \citep[e.g.,][]{belkin2019reconciling, belkin2019two, hastie2019surprises,nakkiran2019deep}, which we shall explain and extend to ``multiple descents" later in this article.

Regardless of how we treat $\sigma^2_\infty$, declaring that a resolution level $R$ is our primary resolution implies that all the information conveyed by variations at resolution levels higher than $R$ can be effectively ignored when predicting $Y$. The MR formulation therefore permits us to quantify the degree of individualization, and to be explicit about the two contributing factors of our overall prediction error:  (I) the resolution bias due to choosing a finite $R$; and (II) the estimation error at the given resolution $R$. 
The MR framework therefore integrates the model selection step (I) with the model estimation step (II), and hence it does not need to treat the issue of selection post-hoc \citep[e.g.,][]{berk2013,lee2016exact,tibshirani2016exact}. Furthermore, since the filtration $\{\F_r, r=0, 1, \ldots \}$ forms  a cumulative ``information basis", the choice of optimal $R_n$ for a given data set with size $n$  is in the same spirit as finding a \textit{sparse representation} in wavelets, for which there is a large literature \cite[see][]{poggio1998sparse,donoho2003optimally}, though here perhaps it is more appropriate to term it as \textit{parsimonious representation}.

\subsection{Time-honored intuitions, and timely new insights?}
Our findings confirm some time-honored intuitions and build new ones. Specifically, in Section \ref{sec:multi_resol_pred} we first decompose  
 the total prediction error into three components:
 the ultimate risk, the resolution bias and the estimation error. 
We then provide an overview and highlight on how the optimal resolution depends on the decay rates of the resolution bias and the corresponding estimation error under a particular ordering of covariates, respectively, in the stochastic world (i.e.,  $\sigma^2_\infty>0$) and deterministic world (i.e., $\sigma^2_\infty=0$). Section~\ref{sec:multi_resol_pred} concludes with some theoretical insights on the issue of ordering the covariates.  

Sections~\ref{sec:theory_linear} and \ref{sec:theory_tree} then establish our general results with an infinite number of continuous and categorical predictors, and illustrates them with linear regression and tree regression, respectively. In particular, in Sections~\ref{sec:linear_zero_tau} and \ref{sec:determin_categorial}, we  report some intriguing findings when $\sigma_\infty^2=0$ respectively for these two regression models. In this world without variance, the optimal resolution may rightly prefer the direction of over-fitting in the traditional sense; indeed the optimal resolution level can even approach infinity. But this preference does not violate the time-honored bias-variance trade-off principle because, without variance, the optimal trade-off may have to put all its eggs in the basket of bias. 

We also find that the predictive error curve can exhibit double descents or even arbitrarily many descents without ever entering the over-parameterized realm.  These findings might provide a new angle to investigate very flexible and saturated models, such as deep learning networks, to understand their seemingly magical ability to resist over-fitting. That is, with a huge amount of data, it is conceivable that an exceedingly rich and flexible deterministic model class can learn to practically exhaust all patterns detectable with reasonable chances in reality (which can be far fewer than in theory). In such cases, we would not need  $\sigma_\infty^2>0$ to absorb the imperfection of the model, effectively rendering it a deterministic system,
a system that prefers ``over-fitting" in the traditional sense. This is also explored empirically in Section \ref{sec:finite}, 
where we summarize
a simulation study with linear models that investigates the practicality of
the MR approach that employs cross validation and other methods for selecting the primary resolution in practice.
The details of the study, as well as all the technical proofs in our article, are deferred to the Appendices. Section~\ref{sec:main_practical} completes our exploration by making connections to relevant literature and discussing further work.

\section{A Multi-Resolution Framework}\label{sec:multi_resol_pred}

\subsection{Prediction with potentially infinitely many predictors}

To start, let 
$\odot$ be a member of a target population, which can be as small as a single individual,
and $Y(\odot)$ be a univariate response from $\odot$, which can be discrete (e.g., a treatment success indicator) or continuous (e.g., the change of the cholesterol level due to a treatment). Typically the investigators have some prior knowledge about which set of the individual's attributes play more critical roles in determining $Y$. But,  philosophically and practically,  no one can be certain about what constitutes the complete set of relevant predictors. Statistically we can model such a situation by requiring the distribution of $Y(\odot)$ to depend on potentially infinitely many attributes of $\odot$, denoted by $\vec{\bm{X}}_{\infty}(\odot)=\{X_0(\odot), X_1(\odot), X_2(\odot), \ldots \}$.  In reality we can never observe infinitely many covariates, but the arrival of the digital age has created many situations where we have far more predictors than the sample size. Our job is to seek a small subset of the predictors of the outcome with accuracy that makes our prediction useful. 

We use $f_{\odot}$ to denote the joint probability mass/density function of the response and covariates for the target individual $\odot$. 
To learn about  $f_{\odot}$, especially the dependence of $Y(\odot)$ on $\vec{\bm{X}}_{\infty}(\odot)$, 
we need to collect a training set 
$\trainingset_n=\{(y_i,\vec{\bm{x}}_{i \infty}): i=1,2,\ldots,n\}$,
which are (assumed to be) independent and identically distributed (i.i.d.) samples from a training (proxy) population.  Clearly the phrase ``training" implies that we need some assumptions to link $\trainingset_n$ to the target population. The ideal assumption of course is that 
$f_{\odot}$ equals the joint probability mass/density function $f$
of $(Y, \vec{\bm{X}}_\infty)$ for the training population. Whereas all attempts should be made to mimic the target population when we form the training population, it is wise to  permit our framework 
sufficient flexibility 
to admit cases where $f$ may differ from $f_{\odot}$ but in an approximately known way. Mathematically, this flexibility can be handled by introducing a weight function  
\begin{equation}\label{eq:weight}
w_{\odot}(Y, \vec{\bm{X}}_{\infty})= \frac{f_{\odot}(Y, \vec{\bm{X}}_{\infty})}{f(Y, \vec{\bm{X}}_{\infty})}=\frac{f_{\odot}(Y| \vec{\bm{X}}_{\infty})}{f(Y|\vec{\bm{X}}_{\infty})}\frac{f_{\odot}(\vec{\bm{X}}_{\infty})}{f(\vec{\bm{X}}_{\infty})}. 
\end{equation}
Normally it is almost inevitable to assume $f_{\odot}(Y| \vec{\bm{X}}_{\infty}) \approx f(Y|\vec{\bm{X}}_{\infty})$, that is, the (stochastic) relationships between the outcome and the predictors for the target population and the training population must be approximately the same, because otherwise our selection of the training sample is a very poor one. Consequently, (\ref{eq:weight}) implies 
$w_{\odot}(Y, \vec{\bm{X}}_{\infty})\approx f_{\odot}(\vec{\bm{X}}_{\infty})/f(\vec{\bm{X}}_{\infty})$, which is easier to estimate since it merely involves adjusting the marginal distribution of the $\vec{\bm{X}}_\infty$, known as a ``covariate shift" in the  literature \citep[see, e.g.,][]{learningdiff2007, sugiyama2012machine}. 
{\newchange However, when $\odot$ is indeed a single individual or beyond the support of the training population, the weight $w_{\odot}(Y, \vec{\bm{X}}_{\infty})$ is not defined without lowering the resolution level for evaluation; see \citet{meng2020}. We leave the choice of weights for a future study, as our focus in this article is on the choice of optimal resolutions with given weight functions.}

To avoid confusion, 
we use $\E_{\odot}$ and $\E$  to denote the expectations over the target and the training populations respectively.  To evaluate the prediction performance of a prediction function $\hat{y}(\vec{\bm{X}}_\infty)$, we can adopt a loss function $\loss(y, \hat{y})$, which is problem-dependent.  Clearly, 
we can minimize the expected loss $\E_{\odot}[\loss(Y, \hat{y}(\vec{\bm{X}}_\infty ))]$ via minimizing
$\E[\loss_{\odot}(Y, \hat{y}(\vec{\bm{X}}_\infty ))]$,
where $\loss_{\odot}(Y, \hat{y}(\vec{\bm{X}}_\infty )) \equiv \loss(Y, \hat{y}(\vec{\bm{X}}_\infty )) w_{\odot}(Y, \vec{\bm{X}}_\infty)$; 
the subscript $\odot$ indicates its dependence on the utility of prediction and the target population of interest. 
With this setup, we proceed as follows.
At each resolution $r$, 
we restrict our prediction to a family of functions $\{g(\vec{\bm{x}}_{r}; \bm{\theta}_r)\}$, where $\vec{\bm{x}}_{r}=(x_0, \ldots, x_r)$. 
For notational simplicity, we suppress the explicit dependence of $g(\cdot)$ on $r$, 
but rather use the inputs $\vec{\bm{x}}_{r}$ and $\bm{\theta}_r$ to emphasize such dependence implicitly.
Note that $\bm{\theta}_r$ denotes a generic parameter whose dimension can vary with $r$. For example, $\dim(\bm{\theta}_r) = 
\binom{r+2}{2}$
if $g(\vec{\bm{x}}_{r};\bm{\theta}_r)$ is a linear function of covariates up to resolution $r$ and of all their 
quadratic terms and pairwise interactions.
Generally, we will choose $g(\cdot)$ such that the family of prediction functions becomes richer as resolution increases. 
That is, for any $r< r'$, 
any prediction function $g(\vec{\bm{x}}_{r}; \bm{\theta}_r)$ at resolution $r$, viewed as a function of $\vec{\bm{x}}_{r'}$, belongs to the family of prediction functions at resolution $r'$. At each resolution $r$,
the optimal prediction is then $g(\vec{\bm{x}}_{r}; \bm{\theta}_r^*)$, with 
$
\bm{\theta}^*_r \equiv \argmin_{\bm{\theta}_r} 
\E[\loss_{\odot}( Y, g(\vec{\bm{X}}_{r}; \bm{\theta}_r))]. 
$
A usual estimator for $\bm{\theta}^*_r$ 
is obtained by minimizing the empirical risk:
$
\hat{\bm{\theta}}_r \equiv
\arg\min_{\bm{\theta}_r}
\sum_{i=1}^{n}
\loss_{\odot}(y_i, g(\vec{\bm{x}}_{ir};\bm{\theta}_r)).
$
Hence, once we choose the primary resolution $R$, we predict $Y$ by $g(\vec{\bm{x}}_{R}; \hat{\bm{\theta}}_R)$ for an individual with covariate $\bm{x}_\infty$, and estimate 
the prediction error 
$
\E[\loss_{\odot}( Y, g(\vec{\bm{X}}_{R}; {\bm{\hat\theta}}_R) )]
$ by  the empirical risk $n^{-1}\sum_{i=1}^{n}
\loss_{\odot}(y_i, g(\vec{\bm{x}}_{iR};\bm{\hat\theta}_R))
$, or by cross-validation. 

\subsection{A trio decomposition of the prediction error}\label{sec:decomposition}

To better understand the prediction error at a resolution $R$, we decompose
$\E[\loss_{\odot}( Y, g(\vec{\bm{X}}_{R}; \hat{\bm{\theta}}_R) )]$ into three parts: 
the ultimate risk, the resolution bias at resolution $R$, and the estimation error at resolution $R$. 
{\newchange The \textit{ultimate risk} is
$\tau^2 \equiv \E[\loss_{\odot}(Y, g(\vec{\bm{X}}_\infty; \bm{\theta}_{\infty}^*))],$
which depends on the families of functions used for prediction. Specifically, it has two sources, one due to model misspecification and the other due to the \textit{intrinsic variation} at the infinite resolution,
i.e., $f(Y|\vec{\bm{X}}_{\infty})$. That is, the intrinsic variance $\sigma^2_{\infty}=\Var(Y|\vec{\bm{X}}_{\infty})$ can be positive (or even infinity) in a stochastic world.} 
The \textit{resolution bias} at resolution $R$ 
then is
\begin{align*}
A(R) & = 
\sum_{r=R+1}^\infty
\left\{
\E[\loss_{\odot}(Y, g(\vec{\bm{X}}_{r-1};\bm{\theta}_{r-1}^*))]
-
\E[\loss_{\odot}(Y, g(\vec{\bm{X}}_{r};\bm{\theta}_{r}^*))]
\right\}.
\end{align*}
When the family of prediction functions becomes richer as resolution increases, 
$A(R)$ is non-increasing in $R$ and approaches zero as $R\rightarrow\infty$, i.e., 
$\lim_{R\rightarrow \infty} A(R) =  0.$ 
Finally, 
the \textit{estimation error} at resolution $R$, 
\begin{align*}
\varepsilon(R,\trainingset_n) = 
\E[\loss_{\odot}(Y, g(\vec{\bm{X}}_{R};\hat{\bm{\theta}}_R))]
- 
\E[\loss_{\odot}(Y, g(\vec{\bm{X}}_{R};\bm{\theta}_R^*))],
\end{align*}
is non-negative by the optimality of $\bm{\theta}_R^*.$
From the above,  the prediction error at resolution $R$ using training set $\trainingset_n$ can be decomposed as
\begin{align}\label{eq:decomposition}
\E[\loss_{\odot}( Y, g(\vec{\bm{X}}_{R}; \hat{\bm{\theta}}_R))] & = 
\tau^2
+ A(R) + \varepsilon(R,\trainingset_n).
\end{align}
As we shall show shortly, theoretically, we can gain good insight by considering the averaged version of this decomposition, that is, 
\begin{align}\label{eq:decomposition_average_training}
\E_n\left[\E[\loss_{\odot}( Y, g(\vec{\bm{X}}_{R}; \hat{\bm{\theta}}_R))]\right] & = 
\tau^2
+ A(R) +  \varepsilon(R, n),  
\end{align}
where, with slight abuse of notation, $ \varepsilon(R, n)=\E_n [\varepsilon(R,\trainingset_n )]$, and $\E_n$ denotes the expectation over all training sets of size $n$. 

{\newchange It is worthy noting that \eqref{eq:decomposition} is an extension of the ANOVA decomposition \eqref{eq:keyi} in expectation,
with \eqref{eq:keyi} being a special case with $\loss_{\Smiley}(y,\hat{y}) = (y-\hat{y})^2$ and $g(\bm{X}_{\vec{r}}; \bm{\theta}_r^*) = \E(Y \mid \vec{\bm{X}}_r)$ for $r\ge 1$, i.e., the prediction functions are correctly specified.
Under this special case, the ultimate risk $\tau^2$ reduces to $\E(\sigma^2_{\infty})$. We remark that in general
$\tau^2 \ge \E(\sigma^2_{\infty})$, 
with equality holds when we correctly specified the prediction functions.
Because $\sigma^2_{\infty} \ge 0$, a zero $\tau^2$ then must imply $\sigma^2_{\infty}=0$ (almost surely), i.e., a deterministic world without variance. Here, 
as in \eqref{eq:keyi}, $\sigma^2_r=\Var[Y \mid \bm{X}_{\vec{r}}]$ and $\mu_r=\E[Y \mid \bm{X}_{\vec{r}}] = g(\bm{X}_{\vec{r}}; \bm{\theta}_r^*)$, which is estimated by $\hat{\mu}_r = g(\bm{X}_{\vec{r}}; \hat{\bm{\theta}}_r)$. The resolution bias at resolution $R$ reduces to $\sum_{r=R}^{\infty}[\E(\sigma^2_r) - \E(\sigma^2_{r+1})] = \sum_{r=R}^{\infty}\E (\mu_{r+1}-\mu_r)^2$, and 
the estimation error to $\E(\hat{\mu}_R - \mu_R)^2$. 
Consequently, \eqref{eq:decomposition} reduces to
\begin{align}
& \E(\sigma_R^2) + \E(\hat{\mu}_R - \mu_R)^2  =\E (\sigma^2_{\infty}) +\sum_{r=R}^{\infty}\E (\mu_{r+1}-\mu_r)^2 +
\E(\hat{\mu}_R - \mu_R)^2, \label{eq:quad} 
\end{align}
which is equivalent to \eqref{eq:keyi} by further averaging over $\F_r$ (i.e., the conditioning in \eqref{eq:keyi}).}

Because  in  \eqref{eq:decomposition} and \eqref{eq:decomposition_average_training} 
the ultimate risk 
is not affected by the resolution (under the assumption that the function form is 
the same at the infinite resolution),
for any training set $\trainingset_n$, 
the optimal primary resolution that minimizes the prediction error in \eqref{eq:decomposition} is 
\begin{align*}
R_{\trainingset_n,\text{opt}}  & = \arg\min_{R} \E[\loss_{\odot}( Y, g(\vec{\bm{X}}_{R}; \hat{\bm{\theta}}_R) )]
=
\arg\min_{R}\left[
A(R) + \varepsilon(R,\trainingset_n)
\right].
\end{align*}
Similarly,
the optimal primary resolution that minimizes the prediction error in \eqref{eq:decomposition_average_training} is
\begin{align*}
R_{n,\text{opt}}  & = \arg\min_{R} \E_n\left[ \E[\loss_{\odot}( Y, g(\vec{\bm{X}}_{R}; \hat{\bm{\theta}}_R) )]\right]
=
\arg\min_{R}\left[
A(R) + \varepsilon(R, n)
\right].
\end{align*}
Studying $R_{\trainingset_n,\text{opt}}$ or $R_{n,\text{opt}}$ for a particular training set $\trainingset_n$ or a particular size $n$ is generally 
difficult. We therefore resort to the usual asymptotic strategy. 
That is, as $n$ goes to infinity, we seek a sequence $\{R_n\}_{n=1}^\infty$ such that $A(R_n)+ \varepsilon(R_n,\trainingset_n)$ or 
$A(R_n) + \varepsilon(R_n, n)$
converges to zero 
(in probability) 
as fast as possible. We will adopt the notation $a_n \asymp b_n$  if two sequences $\{a_n\}$ and $\{b_n\}$ satisfy $a_n = O(b_n)$ and $b_n = O(a_n)$, and similarly, $\tilde{a}_n \overset{\pr}{\asymp} \tilde{b}_n$ if random sequences
$\{\tilde{a}_n\}$ and $\{\tilde{b}_n\}$ satisfy $\tilde{a}_n = O_{\pr}(\tilde{b}_n)$ and $\tilde{b}_n = O_{\pr}(\tilde{a}_n)$, using the usual definition of $O_{\pr}$.
We also use the notation $a_n \gtrsim b_n$ for $b_n=O(a_n)$.

\subsection{Optimal resolution and learning rate in the stochastic world} \label{sec:rate_optimal_resolution}

Intuitively, there must be a trade-off in determining the optimal $R_n$. To control the resolution bias $A(R_n)$,  we desire large $R_n$ because of the monotonically decreasing nature of $A(R)$. For $A(R_n)$, we will consider four scenarios, representing four different levels of sparsity. 
However, to control the estimation error, we want small $R_n$ to reduce the number of model parameters to be estimated. When the intrinsic variance $\sigma_\infty^2>0$, under some regularity conditions (e.g., our estimation methods are efficient), we have the usual
$\varepsilon(R_n, n) \asymp \dim (\bm{\theta}_{R_n})/n$ asymptotics. Hence we need $\dim (\bm{\theta}_{R_n})=o(n)$ to ensure $\varepsilon(R_n,n)$ converges to zero as $n \rightarrow\infty$.

\begin{table}
	\centering
	\caption{Rate-optimal $R_n$ and minimal error $L_n\equiv A(R_n) + \varepsilon(R_n,n)$ in a stochastic world. All $c_n$'s are of $O(1)$ but satisfy different constraints as specified in Theorem~\ref{th:cont} (Section~\ref{sec:gen_result_linear}) and Theorem~\ref{th:disc} (Section~\ref{sec:general_regression_tree}).}
	\label{tab:optimal_rate}
	\resizebox{\columnwidth}{!}{%
	\begin{tabular}{|l|c|c|c|c|}
		\hline
		\diagbox{$\varepsilon(r,n)$}{$A(r)$} & $\substack{{\rm Hard\ Thresholding}\\ 1_{\{r < r_0\}}}$ 
		&  $\substack{{\rm Exponential\ Decay}\\ e^{-\xi r} \ (\xi>0) }$ & $\substack{{\rm Polynomial\ Decay} \\ r^{-\xi} \ (\xi>0) }$ & $\substack{{\rm Logarithmic\ Decay}\\
		 \log^{-\xi}(r) \ (\xi>0) }$
		\\
		\hline
	Polynomial in $r$ & $R_n \asymp c_n \ge r_0$ & $c_n\log{n}$ & $c_n n^{1/(\xi+\alpha)}$ & $\frac{c_nn^{1/\alpha}}{\log^{\xi/\alpha}(n)}$\\ $r^{\alpha}/{n}$ 
	$(\alpha > 0)$ 	& $L_n\asymp 1/n$ & $\log^{\alpha}(n)/n$ & $n^{-\xi/(\xi+\alpha)}$ &  $[\log(n)]^{-\xi}$ \\
		\hline
Exponential in $r$ & $R_n \asymp c_n \ge r_0$	& $\frac{\log{n}+\log c_n}{\xi+\log\alpha}$ & $c_n\log(n)$ & $c_n\log(n)^{}$\\ $\alpha^r/n$ 
	$(\alpha>1)$ & $L_n\asymp 1/n$	& $ n^{-\xi/(\xi+\log\alpha)}$ & {$[\log(n)]^{-\xi}$} & $[\log\log(n)]^{-\xi}$\\
		\hline
	\end{tabular}%
	}
\end{table}

Table~\ref{tab:optimal_rate} provides a high-level preview of the general asymptotic results under the above setting, with four (common) choices of the decay rate for $A(r)$. What do these asymptotic results tell us? First, the hard-thresholding cases correspond to the classical parametric setting, with a fixed number ($r_0$) of predictors. Hence, as long as our resolution level $R_n$ exceeds $r_0$ (arbitrarily often), we will reach the classical $n^{-1}$ error rate, excluding the ultimate risk (which includes the intrinsic variation). 

Second, the rate-optimal resolution $R_n$---and hence the minimal prediction error---depends critically on both the decay rate $A(r)$ and estimation error $\varepsilon(r, n)$. When $\varepsilon(r, n)$ grows polynomially with the resolution level (e.g., the continuous covariates cases), we can still practically achieve the $n^{-1}$ rate when $A(r)$ decays exponentially, because the price we pay is merely a $\log^\alpha(n)$ term. However, if $\varepsilon(r, n)$ grows exponentially (e.g., with discrete covariates), then although $R_n$ is still practically of $\log(n)$ type,  the parametric error rate $n^{-1}$ is no longer achievable even if $A(r)$ decays exponentially. Instead, we can achieve only a non-parametric like error rate in the form of $n^{-\xi/(\xi+\log\alpha)}$, which reduces to $n^{-1}$ only if the decay rate parameter $\xi$ for $A(r)$ goes to infinity. 

Third, when $A(r)$ decays polynomially, $R_n$ takes on different rate forms depending on how the estimation error varies with the resolution level $r$,  that is, (A) polynomial in $n$ for polynomial estimation error versus (B) $\log(n)$ for exponential estimation error. More importantly, the difference in the corresponding minimal prediction errors tells us that in case (A), the individualized prediction and learning rate is slow but still practical. However, case (B) belongs to the situation where the individualized learning rate is too slow to be useful.  The same is true once the decay rate is logarithmic because then the prediction error rate is no better than that of case (B); see the last column of Table~\ref{tab:optimal_rate}. Therefore, among the eight scenarios in Table~\ref{tab:optimal_rate}, only the first five (counting first top to bottom then
left to right) of these permit practical individualized learning.

Here we give a side note on the asymptotic expression in Table \ref{tab:optimal_rate}. 
First, a more rigorous expression for the polynomial estimation error is $\varepsilon(r,n) \asymp \max\{r^\alpha,1\}/n$. We simply use $r^\alpha/n$ not only for descriptive convenience, but also since $r\ge 1$ is required for achieving rate optimal prediction when $A(0)>0$. Second, the decay rates for resolution biases, e.g., $r^{-\xi}$ and $\log^{-\xi}(r)$, may be well-defined only for $r$ larger than a certain value. 
Whenever such a quantity is not prescribed, we can view it as a finite positive constant. 
Again, this complication has little relevance for our asymptotic theory for the rate-optimal resolution, which must go to infinity as $n\rightarrow \infty$ when $A(r)>0$ for any finite $r$.

\subsection{Optimal resolution and learning rate in the deterministic world} \label{sec:rate_optimal_resolution_zero}
\begin{table}
\centering
\caption{Rate-optimal $R_n$ and minimal error $L_n\equiv \PE_n$ in a deterministic world. All $c_n$'s are of $O(1)$ but satisfy different constraints as specified in Theorem~\ref{th:contzero} (Section~\ref{sec:linear_zero_tau}), Theorems~\ref{thm:binary_zero_tau} and \ref{thm:lower_bound_binary_tau2_zero} (Section~\ref{sec:determin_categorial}). Note:  like in Table~\ref{tab:optimal_rate}, $\xi>0$. In some cases, the forms of rate-optimal $R_n$ are only sufficient but not necessary for achieving the optimal rate.
}\label{tab:optimal_rate_zero}
\resizebox{\columnwidth}{!}{%
\begin{tabular}{|l|c|c|c|c|}
\hline
		\diagbox{Model}{$A(r)$} & $\substack{{\rm Hard\ Thresholding}\\ 1_{\{r < r_0\}}}$ 
		&  $\substack{{\rm Exponential\ Decay}\\ e^{-\xi r} }$ & $\substack{{\rm Polynomial\ Decay} \\ r^{-\xi} }$ & $\substack{{\rm Logarithmic\ Decay}\\
		 \log^{-\xi}(r) } $
		\\
		\hline
{Linear} & $n-3 \ge R_n \ge  r_0$  & {$R_n=n - c_n$} & {$c_n n$} & {$c_n n^k$, $k\in (0,1]$}
\\
{regression} & {$L_n=0$} & {$L_n\asymp n e^{-\xi n}$} & {$n^{-\xi}$} & {$[\log(n)]^{-\xi}$} \\
\hline Regression\ tree& $R_n \ge r_0$ & {$
\begin{cases}
\gtrsim c_n \log (n), & \xi > \log(M) \\
= c_n \log (n), & \xi = \log(M) \\
= c_n\log (n), &  \xi <\log(M)
\end{cases}
$} & {\small$c_n\log^{}(n)$} & {\small$c_n\log(n)^{}$}
\\ with \ predictors  &{} &{} &{} &
\\ 
$\substack{X_i's\ \text{are\ i.i.d.} \\ {\rm Uniform}\{1, \ldots, M\}}$
& $L_n\asymp (1-M^{-r_0})^n$
&
$\begin{cases}
\lesssim n^{-1}, & \xi > \log(M)\\
\lesssim n^{-1}\log(n), &\xi = \log(M)\\
n^{-\xi/\log(M)}, & \xi < \log(M)
\end{cases}
$
 & $[\log(n)]^{-\xi}$& $[\log\log(n)]^{-\xi}$\\
\hline
\end{tabular}
}
\end{table}

The case with $\sigma^2_\infty=0$ or more precisely zero ultimate risk, however, behaves rather differently, and will be studied in Sections \ref{sec:linear_zero_tau} and \ref{sec:determin_categorial} for two popular models. We restrict to specific models because we have not been able to obtain general results parallel to those in Table~\ref{tab:optimal_rate}. But even with these specific results, we already see asymptotic behaviors, as revealed in Table~\ref{tab:optimal_rate_zero}, that are quite different from those in Table~\ref{tab:optimal_rate}. The trivial ones are for hard thresholding, where for the linear model, as long as 
sample sizes are large enough
to solve the linear system, we will have zero error.  Similarly, for the regression tree case, where the only possible error is when no exact match of the target case exists with respect to the $r_0$ important predictors in the training sample of size $n$.
The probability of this occurring is exactly  $(1-M^{-r_0})^n$ under our model assumption that all predictors $X_j$'s are independently and identically distributed as  uniform on $\{1, 2, \ldots, M\}$, a mathematically convenient assumption that permits us to obtain analytical results. 

The more interesting cases are when $A(r)$ decays exponentially, which permits the optimal $R_n$ to be infinity; {\newchange for example, for the regression tree model, when the resolution bias decays exponentially with $\xi>\log(M)$, choosing $R_n = \infty$ can lead to prediction error no worse than order $n^{-1}$.} 
That is, we are not worried about over-fitting because the benefit from exact matching outweighs the imprecision in solving, say, the linear system. This phenomenon does not occur when we restrict ourselves to statistical models with a finite number of predictors, which would force us to adopt an error term to capture the unexplained residual variations in the outcome variable. With an infinite number of predictors, there is at least a theoretical possibility that  collectively they can explain all the variations in the outcome variable. There is no free lunch, however, as this full-explanatory power requires that the predictive model is specified correctly. Nevertheless, the discovery of this phenomena by permitting models with an infinite number of predictors should remind us of the value of exploring this line of thinking, as it might lead to alternative insights into why certain highly saturated black box models (e.g., deep learning networks) can have a seemingly over-fitting resistant nature. We shall explore this line of thinking in Section~\ref{sec:double_descent}, where we show how easily we can go beyond the intriguing ``double descents" phenomenon 
\citep[e.g.,][]{belkin2019reconciling,hastie2019surprises,nakkiran2019deep} in the deterministic world with infinitely many predictors, without even having to actually enter the realm of over-fitting.

\subsection{The impact of ordering}\label{sec:ordering}

So far we have assumed that the order of the covariates 
is pre-determined. In reality, the investigators may have some ``low resolution'' knowledge of the importance of \textit{groups} of the covariates (e.g., age and gender are typically among the predictors to be included in predicting health outcome). However, they often do not possess the refined knowledge to specify the exact order of the covariates in terms of their predictive power (if they did, the problem would be much easier). Mathematically, when the resolution levels change, we can change all the covariates included in the model. But to utilize our partial knowledge, however imprecise, we wish to investigate the dependence of  prediction error on the order of the covariates, and in particular the degree of mis-ordering that can fundamentally alter the prediction error rate.  That is, how much misspecification of the order can we 
tolerate before it really matters?
Assume that the family of prediction functions becomes richer as resolution increases, and they are invariant to the ordering of the covariates, i.e., 
for any $r$ and any permutation $\pi$ of $\{0, 1, 2, \ldots, r\}$, the families of functions $\{g(\vec{\bm{x}}_r; \bm{\theta}_r)\}$ and $\{g(\vec{\bm{x}}_{\pi(r)}; \bm{\theta}_r)\}$ are the same, where $\vec{\bm{x}}_{\pi(r)} \equiv (x_{\pi(0)}, x_{\pi(1)}, \ldots, x_{\pi(r)})$.
Consequently,
the ultimate risk $
\tau^2=\E[\loss_{\odot}(Y, g(\vec{\bm{X}}_\infty; \bm{\theta}_{\infty}^*))]$
is invariant to the ordering of covariates. This is most clearly seen under squared loss and correctly specified conditional mean function, where $\tau^2=\E(\sigma_\infty^2)$, as discussed prior to arriving at \eqref{eq:quad}.
Below we will focus on the resolution bias and estimation error. 

We begin by considering a specific ordering of the covariates, $\{X_0, X_1, X_2,\ldots\}$, identified with its resolution bias $A(\cdot)$,  estimation error $\varepsilon(\cdot, n)$, and rate-optimal resolution  $R_n$. 
Let $A'$, $\varepsilon'$ and $R_n'$ be their counterparts under a new ordering 
$\{X'_{0}, X'_{1}, \ldots\}$. 
Generally, the estimation errors $\varepsilon(r_n, n)$ and $\varepsilon'(r_n, n)$ under both orderings (i.e., $r_n = R_n$ or $R'_n$) are of the same order after some proper scaling of ``unit noise'',
because they involve estimation for the same number of parameters. 
In the following discussion, we assume $\varepsilon(r_n, n)/[A(r_n) + \tau^2] \asymp \varepsilon'(r_n, n)/[A'(r_n) + \tau^2]$, which reduces to $\varepsilon(r_n, n) \asymp \varepsilon'(r_n, n)$ when $\tau^2 > 0$. 
As shown later, this assumption is motivated by the linear regression and tree regression models. 
Then, a sufficient condition for the new order to achieve the optimal rate under the original ordering is that $A’(R_n) = O(A(R_n))$. 
This condition should be intuitive because all it requires is that the new ordering does not delay the inclusion of  covariates which are considered important by the original ordering. 

Suppose now that every covariate matters, in the sense that the resolution bias at any finite resolution is positive, regardless of the ordering of covariates. 
From Section \ref{sec:rate_optimal_resolution}, 
for any ordering of covariates, 
its optimal primary resolution must go to infinity as $n\rightarrow \infty$; that is, we exclude the hard-thresholding case (which is too ideal for the kind of individualized learning we address in this article).
To measure the difference between $A(\cdot)$ and $A'(\cdot)$, we introduce $M_r(A, A')$ to denote the minimum non-negative integer such that the first $r-M_r(A, A')+1$ covariates in ordering $A(\cdot)$ is ranked 
among
the first $r+1$ positions in ordering $A'(\cdot)$, i.e., variables 
$\{X_0, \ldots, X_{r - M_r(A, A')}\}$ are included in $\{X'_0, \ldots, X'_r\}$. 
Note that $M_r(A, A') \le r$ because we can assume $X'_0= X_0$ since they both denote the constant term. It is asymmetric in $A$ and $A'$, 
and the farther $M_r(A, A')$ is away from zero, the more different $A$ and $A'$ will be. 
That is, $M_r(A, A')$ is the number of mistakes we make in choosing the first $r+1$ covariates with respect to the original ordering $A(\cdot)$. The following theorem tells us how many mistakes are acceptable, asymptotically.

\begin{theorem}\label{th:ordering}
Assume that 
(a) the family of prediction functions becomes richer as resolution increases, and is invariant to the permutation of the covariates at each resolution; (b)
the estimation error rate is invariant to the ordering: 
$\varepsilon(r_n, n)/[A(r_n)+\tau^2] \asymp \varepsilon'(r_n, n)/[A'(r_n)+\tau^2]$. 
Then a sufficient condition for $A'(R_n) = O(A(R_n))$  under each decay scenario (as underlined and where $\xi>0$) is given below. 
\begin{itemize}
	\item[(i)] \underline{Exponential Decay: $A(r) \asymp e^{-\xi r}$}:\quad 
	$\limsup_{r \rightarrow \infty}$
	$M_r(A, A') \leq Constant$.
	
	\item[(ii)] \underline{Polynomial Decay:  $A(r) \asymp r^{-\xi}$ }:\quad  $\limsup_{r \rightarrow \infty}$
	$M_r(A, A')/r <1$. 
	
	\item[(iii)] \underline{Logarithmic Decay:  $A(r) \asymp \log^{-\xi}(r)$}: \quad $M_r(A, A') = r - r^{1/a_r}$ with $a_r = O(1)$. 
\end{itemize}
\end{theorem}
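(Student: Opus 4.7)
The plan is to show that the resolution bias under the new ordering at any level $r$ is no worse than that under the original ordering at the reduced level $r - M_r(A, A')$, and then to verify that under each decay scenario the conditions on $M_r$ guarantee $A(R_n - M_{R_n}(A, A')) = O(A(R_n))$, which together with the monotone inequality yields $A'(R_n)=O(A(R_n))$.

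First I would establish the key monotone inequality $A'(r) \le A(r - M_r(A, A'))$ for every $r$. By the definition of $M_r(A, A')$, the predictor set $\{X_0, X_1, \ldots, X_{r - M_r(A, A')}\}$ is contained in $\{X'_0, X'_1, \ldots, X'_r\}$. Under assumption (a), the family of prediction functions grows richer with resolution and is invariant to the ordering of covariates within a given resolution. Consequently, the family at resolution $r$ under the new ordering contains, as a subfamily, the family at resolution $r - M_r(A, A')$ under the original ordering, so minimizing the expected loss $\E[\loss_{\odot}(Y, g)]$ over the larger family cannot increase it. Since $\tau^2$ is invariant to the ordering and, by the telescoping nature of the definition, $A(R) = \E[\loss_{\odot}(Y, g(\vec{\bm{X}}_R; \bm{\theta}_R^*))] - \tau^2$ (and analogously for $A'$), subtracting $\tau^2$ from both sides of the risk inequality gives the desired bound.

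Substituting $r = R_n$ and noting that $R_n \rightarrow \infty$ as $n \rightarrow \infty$ (since every covariate matters, $A(R) > 0$ for every finite $R$, forcing the rate-optimal resolution to diverge), it suffices to show $A(R_n - M_{R_n}(A, A')) = O(A(R_n))$ in each scenario. In the exponential decay case, if $M_r \le C$ for all large $r$, then for large $n$ we have $A(R_n - M_{R_n}) \asymp e^{-\xi(R_n - M_{R_n})} \le e^{\xi C}\, e^{-\xi R_n} \asymp A(R_n)$. In the polynomial decay case, if $M_r/r \le c < 1$ for large $r$, then $R_n - M_{R_n} \ge (1-c)R_n$, whence $A(R_n - M_{R_n}) \asymp (R_n - M_{R_n})^{-\xi} \le (1-c)^{-\xi}\, R_n^{-\xi} \asymp A(R_n)$. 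In the logarithmic decay case with $M_r = r - r^{1/a_r}$ and $a_r$ uniformly bounded by some $A_0$, we have $R_n - M_{R_n} = R_n^{1/a_{R_n}} \ge R_n^{1/A_0} \rightarrow \infty$, so $A(R_n - M_{R_n}) \asymp \log^{-\xi}(R_n^{1/a_{R_n}}) = a_{R_n}^{\xi}\, \log^{-\xi}(R_n) \le A_0^{\xi}\, A(R_n)$. Combining with the monotone inequality yields $A'(R_n) = O(A(R_n))$ in all three cases.

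The main obstacle is cleanly deriving the inequality $A'(r) \le A(r - M_r(A, A'))$ from the telescoping definition of the resolution bias, which hinges on correctly invoking both the richness-with-resolution and permutation-invariance parts of assumption (a) to embed the $(r - M_r, A)$ prediction family into the $(r, A')$ family. Once that inclusion is pinned down, the three case verifications are elementary manipulations of $e^{-\xi r}$, $r^{-\xi}$, and $\log^{-\xi}(r)$ that rely only on $R_n \rightarrow \infty$ and the stated bounds on $M_r$, so no further estimation-error analysis is needed to reach the stated conclusion $A'(R_n) = O(A(R_n))$.
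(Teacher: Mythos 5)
Your proposal is correct and follows essentially the same route as the paper's proof: the key inequality $A'(r) \le A(r - M_r(A, A'))$ obtained from the nesting and permutation-invariance of the prediction families, followed by the same three elementary verifications that $A(R_n - M_{R_n}(A,A')) = O(A(R_n))$ under each decay rate (the factors $e^{\xi C}$, $(1-c)^{-\xi}$, and $a_{R_n}^{\xi}$ appear in both arguments). No gaps.
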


The qualitative message of Theorem~\ref{th:ordering} is rather intuitive. The fewer of the important predictors that exist, the surer we need to include them in our prediction model.  Although we still need to obtain the necessary conditions, the quantitative messages here can be taken as theoretical guidelines. With exponential decay, the number of forgivable mistakes is very limited, and it cannot be permitted to grow with the resolution level. Under polynomial decay, which still includes the practically
learn-able case when the estimation error is also polynomial in resolution $r$, we can permit the number of mistakes to increase linearly with $r$ (but of course less rapidly than the growth rate of $r$).  

This learn-able case is perhaps the practically most important scenario, since polynomial decay and polynomial estimation error are the kind of cases that we hope to encounter in practice. Exponential decay is likely too much for which to hope in many practical situations, and logarithmic decay is hopeless in terms of individualized learning, as seen in Table~\ref{tab:optimal_rate} and Table~\ref{tab:optimal_rate_zero}. The result in Theorem \ref{th:ordering} with logarithmic decay indicates that we can be almost entirely wrong in our ordering but still maintain the optimal rate. This seemingly too-good-to-be-true result indeed is a negative one,
because it is made possible by the fact that there is really not much information in the predictors,  so whatever orders one uses will not improve the situation.

\section{Prediction with Infinitely Many Continuous Predictors }\label{sec:theory_linear}

\subsection{Normal linear models with infinitely many continuous covariates}\label{sec:linear_model}

Consider the simple linear regression model with infinitely many covariates, which we assume to hold for both the target and training populations:
\begin{align}\label{eq:linear_model}
    Y & = \bm{\beta}_{\infty}^\top \vec{\bm{X}}_{\infty} + \eta \equiv \sum_{r=0}^{\infty} \beta_r X_r + \eta, \quad \eta \sim \mathcal{N}(0, \sigma^2_\eta), \quad 
    \eta \ind \vec{\bm{X}}_{\infty},
    \nonumber
    \\  
    & {\rm where } 
     \ X_0 = 1, \ \ 
    \{X_1, X_2, \ldots\} \text{ are jointly normally distributed}.  
\end{align}
Clearly for $\V(Y) < \infty$, always the case in practice, there will be restrictions on $\beta_r$'s. 
Here we choose the loss function to be $\loss_{\odot}(y, \hat{y}) = \loss (y, \hat{y}) = (y - \hat{y})^2$, 
and the prediction function at resolution $r$ to be linear in the first $r+1$ covariates, i.e., $g(\vec{\bm{x}}_{r}, \bm{\theta}_r)=\bm{\theta}_r^\top \vec{\bm{x}}_{r}.$

Under this setting, 
the optimal prediction function is $g(\vec{\bm{x}}_{r}, \bm{\theta}_r^*) = \E(Y \mid \vec{\bm{X}}_{r} = \vec{\bm{x}}_{r})$. 
The estimator $\hat{\bm{\theta}}_r$ for the true $\bm{\theta}_r^*$ using empirical risk minimization is the least-squares estimator based on the first $r+1$ covariates in the training set $\trainingset_n$. Thus, our prediction for a unit with covariates $\bm{x}_{\infty}$ using primary resolution $r$ is $g(\vec{\bm{x}}_{r}, \hat{\bm{\theta}}_r)=\hat{\bm{\theta}}_r^\top \vec{\bm{x}}_{r}$. 
Now we investigate the prediction error at a specific resolution $r$ and in particular its decomposition as in Section \ref{sec:decomposition}. 
First, because we consider square loss and specify the prediction function perfectly, the ultimate risk $\tau^2=
\sigma^2_\infty\equiv \Var(Y\mid\vec{\bm{X}}_{\infty})=\sigma^2_\eta$.
Note here because of the additivity of the error term $\eta$ in \eqref{eq:linear_model}, $\sigma^2_\infty$ is a constant. In general, $\tau^2$ and $\sigma_\infty^2$ are different. 
In the following we
will use $\tau^2=0$ to indicate the world without variance.

Second, define $\delta_k^2 \equiv \Var(Y\mid \vec{\bm{X}}_{k-1}) - \Var(Y \mid \vec{\bm{X}}_{k})$ as the variance of the response  explained by the $k$th covariates in excess to that by the previous ones. Then $A(r) = \sum_{k=r+1}^\infty \delta_k^2$. 
Third, the estimation error is 
$
\varepsilon(r, \trainingset_n) = (\hat{\bm{\theta}}_r-\bm{\theta}^*_r)^\top
\E(
\vec{\bm{X}}_{r} \vec{\bm{X}}_{r}^\top
)
(\hat{\bm{\theta}}_r-\bm{\theta}^*_r),
$
and its expectation over all training sets of size $n$ is (see the Appendices)
\begin{align}\label{eq:epsilon_r_n_linear}
    \varepsilon(r,n) = \E_n \left[\varepsilon(r, \trainingset_n)\right] = \frac{A(r)+\tau^2}{n-r-2}\left(
    \frac{n-2}{n}+r
    \right).
\end{align}
Consequently, the average prediction error in \eqref{eq:decomposition_average_training} at resolution $r$ is
\begin{align}\label{eq:pred_loss_linear}
\E_n\left\{\E[Y - g( \vec{\bm{X}}_{r}, \hat{\bm{\theta}}_r)]^2 \right\}
& = \tau^2 + \sum_{k=r+1}^\infty \delta_k^2 + 
\E_n\left[
(\hat{\bm{\theta}}_r-\bm{\theta}^*_r)^\top
\E(
\vec{\bm{X}}_{r} \vec{\bm{X}}_{r}^\top
)
(\hat{\bm{\theta}}_r-\bm{\theta}^*_r)
\right]
\nonumber
\\
& = \left[
	\tau^2 + A(r)
	\right]
	\cdot
	\frac{(n+1)(n-2)}{n(n-r-2)}.
\end{align}

The prediction error under linear models is also reported in \citet{hastie2019surprises}, where the authors studied ridgeless regression in the growing-$p$-\&-$n$ setting, with $p/n$ 
assumed to converge to a limit $\gamma$.
Like most articles in the large-$p$-small-$n$ literature, they assumed the residual variance, in our notation $A(p)+\tau^2$,  is free of 
$p$. Under such an assumption, we see from (\ref{eq:pred_loss_linear}) (after replacing $r$ by $p$), that for any value of $\tau^2>0$, the prediction error always explodes when $\gamma = p/n$ approaches 1, yielding the turning point for the ``double descent" phenomenon that we will discuss in Section~\ref{sec:double_descent}. 

However, under our MR framework, it is clear that as the number of predictors increases, the variance unexplained, that is, the residual variance will decrease in general.  Hence it makes little statistical sense to assume $A(r)$ will stay as a constant as $r$ changes -- if this were the case, what would be the point of including more predictors?  By explicitly considering the behavior of the unexplained variance as number of predictors increases, the prediction error can have very different characteristics  under different scenarios. In particular, it is quite clear from \eqref{eq:pred_loss_linear} that when $\tau^2=0$, the prediction error may not explode when $r/n$ approaches one, because $A(r)$ is approaching zero as well, creating a limit of the form $0/0$, whose value will depend on the rate at which $A(r)$ approaches zero.  We will investigate this issue shortly in Section \ref{sec:linear_zero_tau} when $\tau^2=0$, where we reveal
the phenomenon for the optimal resolution $R$ to be as close to $n$ as possible, traditionally considered impossible because it is in the region of (nearly) over-fitting.

\subsection{General results motivated and illustrated by linear regression}\label{sec:gen_result_linear}

Under the linear model \eqref{eq:linear_model}, when the intrinsic variance is positive, i.e., $\tau^2 > 0$, 
we can show that 
for any sequence of resolution levels $\{r_n\}$, a necessary condition for  $\varepsilon(r_n,n)=o(1)$ is 
$\lim_{n\rightarrow \infty} r_n/n=0$.
Moreover, under this condition, $\varepsilon(r_n,n)\asymp r_n/n$; see  the Appendices for a proof. More generally,
we expect that  $\varepsilon(r_n, n) \asymp \dim(\bm{\theta}_r)/n$ holds for continuous predictors under regularity conditions. 

In general cases with continuous covariates,
typically  $\dim (\bm{\theta}_r)\asymp r^{\alpha}$ for some $\alpha>0$. 
The following theorem considers an assumption involving $\varepsilon(r, n) \asymp \dim(\bm{\theta}_r)/n \asymp r^\alpha/n$. That is, the linear model motivates us to consider this assumption of polynomial estimation error rate in resolution, but the result below is not restricted to the linear model. All proofs are given in the Appendices.

\begin{theorem}\label{th:cont}
Let $R_n$ be a rate-optimal resolution, and 
$L_n=A(R_n) + \varepsilon(R_n, n)$ be the corresponding minimal prediction error (after removing the ultimate risk). Then we have the following asymptotic results under each condition on the decay rate of $A(r)$ (as underlined), but all assume \textit{polynomial estimation error}, that is, $\varepsilon(r, n)\asymp r^{\alpha}/n$, where $\alpha>0$. (As in Theorem~\ref{th:ordering}, all $\xi>0$.)

\begin{itemize}
	\item[(i)] \underline{Hard Thresholding:  $A(r)=0$ for $r\geq r_0$, and $A(r)>0$ for $r< r_0$.}  Then $R_n \asymp 1$  with the constraint that $\liminf_{n\rightarrow \infty} R_n\geq r_0$; and  
	$L_n \asymp n^{-1}$. 
	\smallskip
	\item[(ii)] \underline{Exponential Decay: $A(r) \asymp e^{-\xi r}$.} Then $R_n = a_n \log(n)$ with $a_n$ satisfying 
	$a_n \asymp 1$ and 
	$n^{1-\xi a_n}\log^{-\alpha}(n) = O(1);$ and $L_n\asymp n^{-1}\log^{\alpha}(n)$. 
	\smallskip
	\item[(iii)] \underline{Polynomial Decay:  $A(r) \asymp r^{-\xi}$.}  Then $R_n\asymp n^{1/(\alpha+\xi)}$;
	and $L_n \asymp n^{-\xi/(\alpha+\xi)}$. 
	\smallskip
	\item[(iv)] \underline{Logarithmic Decay:  $A(r) \asymp \log^{-\xi}(r)$.} Then  
	$R_n=a_n n^{1/\alpha}\log^{-\xi/\alpha}(n)$ with  $a_n$ satisfying $a_n = O(1)$ and 
	$\liminf_{n\rightarrow \infty} \left[\log(a_n)/\log(n)\right] >- \alpha^{-1};$  and $L_n \asymp \log^{-\xi}(n)$.
\end{itemize}
\end{theorem}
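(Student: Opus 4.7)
The plan is to reduce everything to the deterministic optimization problem of minimizing $f_n(R) := A(R) + R^{\alpha}/n$ over nonnegative integers $R$. This reduction follows immediately from decomposition (\ref{eq:decomposition_average_training}), since the ultimate risk $\tau^2$ is independent of $R$, combined with the polynomial estimation-error hypothesis $\varepsilon(R,n)\asymp R^\alpha/n$. Because $A$ is non-increasing and $R^\alpha/n$ is strictly increasing in $R$, the standard rate-balancing heuristic tells us that a rate-optimal $R_n$ equates $A(R_n)$ and $R_n^\alpha/n$ up to multiplicative constants, and the minimal value $L_n$ is (up to constants) their common order. Matching lower bounds will come from the trivial inequality $f_n(r)\ge \max\{A(r),\,r^\alpha/n\}$ together with continuous-calculus minimization of this majorant under each decay form.

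I would then dispatch the four cases by solving the balance equation $A(R)\asymp R^\alpha/n$ asymptotically. Case (i) is immediate: for $R\ge r_0$ the bias vanishes, so $f_n(R)\asymp R^\alpha/n$ and any bounded $R_n$ with $\liminf R_n\ge r_0$ gives $L_n\asymp n^{-1}$, while any $R_n<r_0$ leaves a positive constant bias. Case (iii): the equation $R^{-\xi}\asymp R^\alpha/n$ yields $R_n\asymp n^{1/(\xi+\alpha)}$ and $L_n\asymp n^{-\xi/(\xi+\alpha)}$. Case (ii): $e^{-\xi R}\asymp R^\alpha/n$ gives $\xi R+\alpha\log R \asymp \log n$, so $R_n\asymp \log(n)/\xi$; writing $R_n=a_n\log n$ and plugging in, the bias $\asymp n^{-\xi a_n}$ is dominated by the estimation error $\asymp \log^\alpha(n)/n$ exactly when $n^{1-\xi a_n}\log^{-\alpha}(n)=O(1)$, giving $L_n\asymp\log^\alpha(n)/n$. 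Case (iv): $\log^{-\xi}(R)\asymp R^\alpha/n$ rearranges to $R^\alpha\log^\xi(R)\asymp n$, and since at the solution $\log(R)\asymp\log(n)$, iteration gives $R_n\asymp n^{1/\alpha}\log^{-\xi/\alpha}(n)$; the stated constraint on $a_n$ is precisely what is needed to retain $\log(R_n)\asymp\log(n)$ so that $A(R_n)\asymp\log^{-\xi}(n)$ and hence $L_n\asymp \log^{-\xi}(n)$.

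The main obstacle will be cases (ii) and (iv), where the optimal $R_n$ is determined only up to a bounded multiplicative slack $a_n$ but the bias is highly sensitive to this slack. Under exponential decay, $A(R_n)\asymp n^{-\xi a_n}$ is polynomial in $n$ with exponent linear in $a_n$, so even $\log\log n$-level drifts in $a_n$ below the critical threshold inflate the rate; making the condition $n^{1-\xi a_n}\log^{-\alpha}(n)=O(1)$ sharp requires carefully tracking the $\log\log n$ corrections that the heuristic derivative calculation suppresses. The logarithmic case is a milder analogue, where the constraint $\liminf \log(a_n)/\log(n) > -\alpha^{-1}$ merely preserves $\log(R_n)\asymp\log(n)$ so that the bias does not drop to a different logarithmic power. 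Once these two cases are handled with care, cases (i) and (iii) follow routinely from the balance computation.
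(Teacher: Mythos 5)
Your proposal is correct and follows essentially the same route as the paper: both reduce the problem to minimizing $A(r)+r^{\alpha}/n$ over $r$, prove the lower bound by observing that the bias and estimation terms cannot simultaneously be of smaller order than the target rate (your $\max\{A(r),r^{\alpha}/n\}$ majorant is exactly the paper's subsequence-contradiction argument in disguise), and obtain the upper bound by constructing $R_n$ from the balance condition, with the one-sided domination conditions in cases (ii) and (iv) handled just as you describe. No gaps.
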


This result provides precise descriptions of various restrictions on the deterministic sequence $c_n$ in the first row of Table~\ref{tab:optimal_rate},  although their details are mostly secondary to the theoretical and practical insights discussed in Section~\ref{sec:rate_optimal_resolution}.  
Moreover, Theorem \ref{th:cont}, as well as the later theorems,
relies only on the rates of $A(r)$ and $\varepsilon(r,n)$, and thus can be applied to general sieves with the same rates of $A(r)$ and $\varepsilon(r,n)$.
We remark that in the derivations above we can replace the expected error $\varepsilon(r, n)$ by 
$\varepsilon(r, \trainingset_n)$, which depends on the actual training set, 
as in \eqref{eq:decomposition}. 
That is, we can seek resolution levels $\{r_n\}$ such that $A(r_n) + \varepsilon(r_n, \trainingset_n)$ converges to zero in probability in the fastest way. The results remain the same if we replace $``\asymp"$ by $``\asympprob"$. 
Indeed, 
for the linear model \eqref{eq:linear_model} with positive $\tau^2$ we show in the Appendices that  
(a) for any resolution $\{r_n\}$, $r_n/n = o(1)$ is necessary for the actual estimation error $\varepsilon(r, \trainingset_n)$ to be $o_{\pr}(1)$, 
and (b) 
when $r_n/n = o(1)$, 
$\varepsilon(r, \trainingset_n) \asympprob r_n/n$.
Therefore, Theorem \ref{th:cont} applies with $\alpha = 1$ and $``\asymp"$ replaced by $``\asympprob"$.

\subsection{Specific results for linear regression without variance}\label{sec:linear_zero_tau}

When $\tau^2=0$, however, we are entering a rather different world. 
Under model \eqref{eq:linear_model} with zero $\sigma^2_\eta(=\tau^2)$, the response $Y$ is (almost surely) a deterministic function of
the countably many covariates. This is not merely a philosophical contemplation, but a mathematical reality.  Indeed, any random variable can be obtained deterministically from a set of uniform variables on the unit interval, and any such uniform variable admits the binary expansion $\sum_{i=1}^\infty 2^{-i} U_i$, where $\{U_i,  i\ge 1\}$ are i.i.d. Bernoulli(1/2); see \cite{doi:10.1080/01621459.2018.1537921} for an investigation of using this deterministic expansion to study statistical independence.  

Of course, empirically it is impossible to test whether $\tau^2=0$. Hence one would expect or at least hope that it is inconsequential for practical purposes to set $\tau^2=0$ or not, 
as alluded to in \cite{meng2014trio}. Therefore we were surprised initially when we saw the critical dependence of our asymptotic results on whether $\tau^2=0$ or not. When $\tau^2=0$, the asymptotic error $\varepsilon(r,n)$ is no longer dominated by the usual $r/n$ order, but by $A(r)$ itself, as discussed previously. 
Specifically,
contrasting with the case where $\tau^2>0$, 
$r/n = o(1)$ is no longer a necessary requirement for  $\varepsilon(r,n)$ to converge to zero, because $A(r)$ can drive
the error to zero even if $r/n\rightarrow 1$, as seen in \eqref{eq:pred_loss_linear}.  This fact leads to different results from Theorem~\ref{th:cont},
as summarized below. We emphasize that the following theorem, although focuses on the linear model, also holds for cases where the estimation error following the same rate as that in \eqref{eq:pred_loss_linear}.

\begin{theorem}\label{th:contzero}
Under model \eqref{eq:linear_model} with $\tau^2=0$ and $L^2$ loss, 
the rate-optimal resolution $R_n$ and the corresponding minimal prediction error $L_n = A(R_n) + \varepsilon(R_n, n)$ have the following forms under each condition on the decay rate of $A(r)$, where all $\xi>0$.  
\begin{itemize}
	\item[(i)] \underline{Hard Thresholding:  $A(r)=0$ for $r\geq r_0$, and $A(r)>0$ for $r< r_0$.}  The optimal resolution is any $R_n$ such that 
	$\liminf_{n\rightarrow \infty} R_n\geq r_0$ and $R_n \le n-3$; 
	and  
	$L_n = 0$ for sufficiently large $n$. 
	\item[(ii)] \underline{Exponential Decay: $A(r) \asymp e^{-\xi r}$.}  $R_n = n-O(1)$ with $R_n \le n-3;$ and $L_n\asymp n e^{-\xi n}$. 
	\item[(iii)] \underline{Polynomial Decay:  $A(r) \asymp r^{-\xi}$.}  
	$R_n = a_n n $ with $a_n$ satisfying $a_n \asymp 1$ and $\limsup a_n < 1$;
	and $L_n \asymp n^{-\xi}$. 
	\item[(iv)] \underline{Logarithmic Decay:  $A(r) \asymp \log^{-\xi}(r)$.} 
	 Optimal resolution is any $R_n$ such that\\ 
	$\limsup R_n/n < 1$, $\liminf \frac{\log R_n}{\log n} > 0;$  
	and $L_n \asymp \log^{-\xi}(n)$.
\end{itemize}
\end{theorem}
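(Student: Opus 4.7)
The key observation is that with $\tau^2=0$, equation \eqref{eq:pred_loss_linear} collapses the prediction error at any resolution $r \le n-3$ into a single product,
\[
L(r,n) = A(r) \cdot \frac{(n+1)(n-2)}{n(n-r-2)} \asymp \frac{n\,A(r)}{n-r-2}.
\]
Unlike the stochastic world of Theorem~\ref{th:cont}, there is no additive variance penalty $r^\alpha/n$; the only cost of increasing $r$ is that the amplification factor $n/(n-r-2)$ grows, and this only begins to bite when $r$ gets close to $n$. The entire proof therefore reduces to a one-dimensional optimization of $L(r,n)$ over integers $r \in [0,\,n-3]$ under each decay regime, together with a matching lower bound on $\min_r L(r,n)$.

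Case (i) is immediate: any $R_n \in [r_0,\,n-3]$ makes $A(R_n)=0$, hence $L_n = 0$. For case (ii), substituting $s = n-r-2 \ge 1$ transforms the objective into $L(r,n) \asymp n e^{-\xi n}\cdot e^{\xi s}/s$, reducing the problem to minimizing $e^{\xi s}/s$ over positive integers $s$. Elementary calculus gives unconstrained minimizer $s^{*}=1/\xi$ with minimum value $e\xi$; both are $O(1)$ (with $s=1$ used when $\xi>1$), so any $R_n = n - O(1)$ with $R_n \le n-3$ attains $L_n \asymp n e^{-\xi n}$, and no choice can do better since $e^{\xi s}/s$ is bounded below by its positive minimum.

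For case (iii), parametrize $r = a n$ with $a \in (0,1)$. Then $A(r) \asymp n^{-\xi} a^{-\xi}$ and $n/(n-r-2) \asymp 1/(1-a)$, giving $L(an,n) \asymp n^{-\xi}\cdot a^{-\xi}/(1-a)$. The univariate function $a \mapsto a^{-\xi}/(1-a)$ has a unique interior minimum at $a^{*} = \xi/(1+\xi) \in (0,1)$ and blows up as $a\to 0^{+}$ or $a\to 1^{-}$; hence $R_n = a_n n$ with $a_n \asymp 1$ and $\limsup a_n < 1$ attains $L_n \asymp n^{-\xi}$. The lower bound follows by ruling out sequences with $r_n/n \to 0$ (then $A(r_n) \gg n^{-\xi}$ alone) or $r_n/n \to 1$ (then the amplification factor dominates). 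Case (iv) is analogous but looser: under the stated conditions $\log R_n \asymp \log n$ and $n-R_n-2 \asymp n$, delivering $L_n \asymp \log^{-\xi}(n)$ directly, while bounded $R_n$ keeps $A$ bounded below by a constant, $\log R_n/\log n \to 0$ makes $\log^{-\xi}(R_n)$ decay strictly slower, and $R_n/n \to 1$ again blows up the amplification factor faster than the slow log-decay can absorb.

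The conceptually demanding step is the exact identity \eqref{eq:pred_loss_linear} itself, which is assumed here; once in hand, each case is routine calculus on $r \mapsto A(r)/(n-r-2)$. The main \emph{technical} nuance is that the decay assumptions $A(r) \asymp \ldots$ typically hold only for $r$ above some threshold, so each lower-bound argument must be stated carefully for $r_n \to \infty$, while bounded-$R_n$ sequences are handled separately using the strict positivity of $A(r)$ at finite resolutions. Finally, since \eqref{eq:pred_loss_linear} is obtained before the outer expectation $\E_n$ in a fairly tight sense, the same arguments yield the analogous $\overset{\pr}{\asymp}$ statements, paralleling the remark following Theorem~\ref{th:cont}.
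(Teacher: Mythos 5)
Your proposal is correct and follows essentially the same route as the paper: both start from the closed form \eqref{eq:pred_loss_linear} with $\tau^2=0$, reduce each case to minimizing $A(r)\cdot n/(n-r-2)$ over $r\le n-3$, establish a uniform positive lower bound on the ratio of this quantity to the claimed rate, and then exhibit the achieving sequences. The only cosmetic difference is that the paper obtains the case-(iii) lower bound via a weighted AM--GM inequality and frames the lower bounds as contradictions along subsequences, whereas you minimize $a^{-\xi}/(1-a)$ directly by calculus --- the two yield the same constant $(1+\xi)^{1+\xi}/\xi^{\xi}$.
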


The most unexpected finding here is that, unlike the case with $\tau^2>0$  where no optimal $R_n$ approaches  over-fitting, that is, having $R_n$ close to $n$, all four cases here permit or even require $R_n$ to be the same order as $n$.
When $A(r)$ has a hard threshold or decays exponentially, we can even allow $R_n = n-3$, almost the largest resolution level by which we can fit an ordinary least squares given sample size $n$ (recall we have $r+1$ unknown parameters at resolution $r$).  When $A(r)$ decays polynomially or logarithmically, we can choose $R_n = cn$ for some constant $c\in (0,1)$. 
That is, the usual concerns with over-fitting disappear. Another unexpected finding is that the logarithmic case permits $R_n \asymp n^{k}$ for $k \in (0,1)$, which is smaller than the polynomial case, against our intuition that slower decay should require a larger number of covariates. 
However, this does not contradict Theorem~\ref{th:cont}, which applies only to cases with $\tau^2>0$.

These unexpected theoretical results compel us to think harder about our intuitions built from the results in
Section \ref{sec:gen_result_linear}, which are consequences from the principle of bias-variance trade-off. 
Does the principle fail here,  as some declared about the ``double descents" phenomena in machine learning, which apparently can also prefer over-fitted models \citep[e.g.,][]{belkin2019reconciling,hastie2019surprises,nakkiran2019deep}? Whereas more research is needed to understand the deterministic regime as identified by Theorem~\ref{th:contzero}, our current understanding is that the bias-variance trade-off is sound and well. 
In a world with zero variance, the optimal trade-off should place all its bets on the bias term.  In a deterministic world, the more mathematical constraints imposed for solving a set of equations, the smaller is the set of potential solutions. Without any variance, any specific individual case is a hard mathematical constraint for reconstructing the deterministic relationship between the outcome and the predictors. It is not surprising therefore---retrospectively---that the mathematics is instructing us to use as higher resolution as possible, except for saving some degrees of freedom to take care of the ``pseudo-variance" caused by $A(r)$, when it does not decay sufficiently rapidly. 

Attempting to understand this preference for over-fitting by the deterministic setup, we realize that the ``double descents" phenomenon may not be due to over-fitting as currently depicted, or at least it can also occur within the ``under-fitting" region.  
In the current literature,  ``double descents" 
refers to the phenomenon that as $p$  increases, the prediction error or risk first decreases due to the bias reduction, 
and then increases due to the inflated variance.
However, as $p$ exceeds (effective) data size, the prediction error decreases again, i.e, it exhibits a double descent phenomenon.
Many researchers have tried to understand this phenomenon, and most of the studies attribute it to over-parameterization and that the fitted model tends to be the smoothest one interpolating all training samples;
see, e.g., \citet{belkin2019reconciling,hastie2019surprises}.

The section below demonstrates that double and indeed multiple descents can occur without over-parameterization. This fact suggests that the issue of ordering covariates
discussed in Section \ref{sec:ordering} is an intrinsic one, and that the reasons for the double descents phenomena in machine learning might be more nuanced than over-parametrization.

\subsection{No surprises:  Double and multiple descent phenomenon}\label{sec:double_descent}

We first consider a setting which demonstrates a double descent phenomena within the under-fitting region.
We assume that the resolution bias has the following form:
\begin{align}\label{eq:approx_error_double}
    A(r) = 
    \begin{cases}
    r^{-1}, & \text{if } r \le \underline{r}, \\
    \frac{1 + \exp(\underline{r} - \overline{r})}{\underline{r}} \cdot \frac{1}{1 + \exp(r - \overline{r})}, 
    & \text{if } r > \underline{r}, 
    \end{cases}
\end{align}
where $\underline{r} \le \overline{r}$ are two positive integers, 
and 
the coefficient $\{ 1 + \exp(\underline{r} - \overline{r}) \}/\underline{r}$ for $r>\underline{r}$ is chosen 
such that $A(r)$ is a continuous function of $r$. 
Figure \ref{fig:double_descent}(a) plots the resolution bias against the resolution when $\underline{r} = 30$ and $\overline{r} = 60$. 
Figure \ref{fig:double_descent}(b) shows the average prediction loss \eqref{eq:pred_loss_linear} when $\tau^2=0$ and $n = 100$, 
which clearly demonstrates a ``double-descent'' phenomenon. 
Comparing Figures \ref{fig:double_descent}(a) and (b), we can see that the double-descent pattern of the prediction error is driven by the varying importance of the added covariates. That is, when we add covariates with little predictive power, we are essentially adding noise to our prediction and hence increase the predictive error, until we add more powerful covariates to (again) bring the error down. 

\begin{figure}[ht]
	\centering
	\begin{subfigure}{.39\textwidth}
		\centering
		\includegraphics[width=1\linewidth]{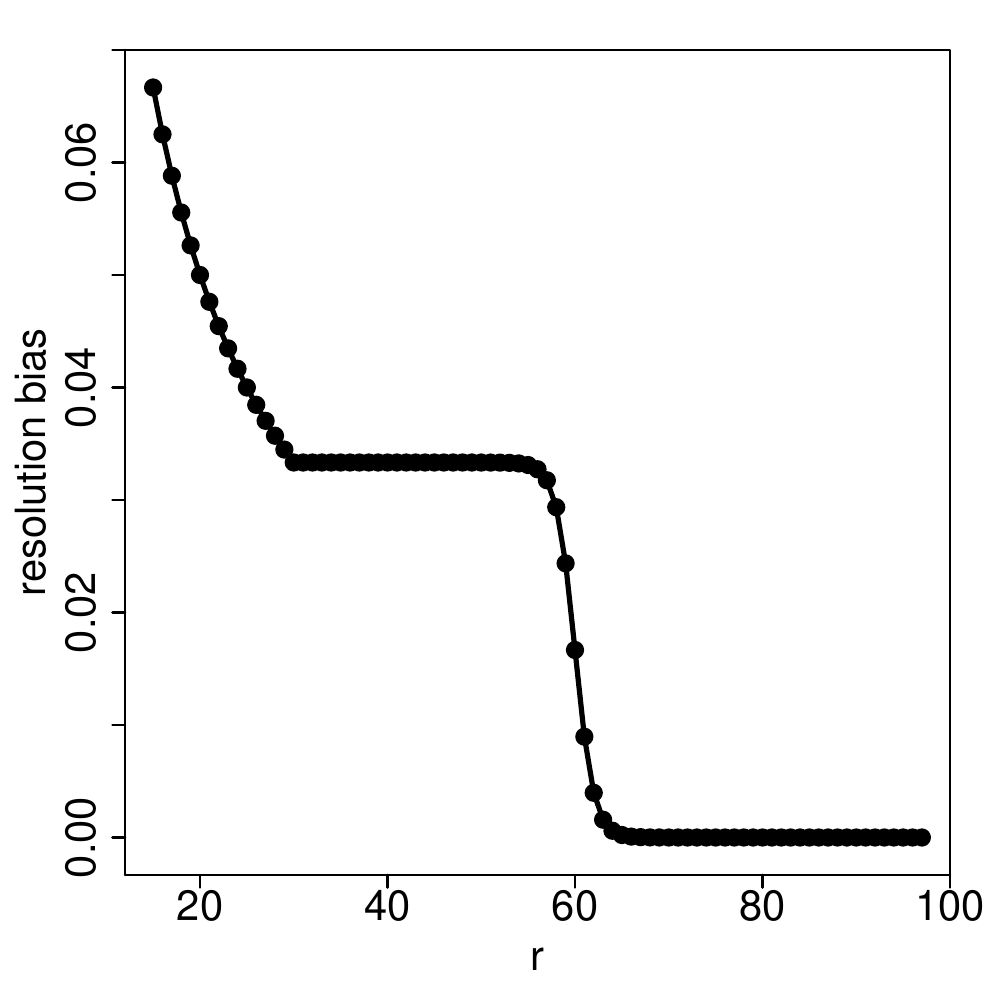}
		\caption{\centering Resolution bias}
	\end{subfigure}%
	\begin{subfigure}{.39\textwidth}
		\centering
		\includegraphics[width=1\linewidth]{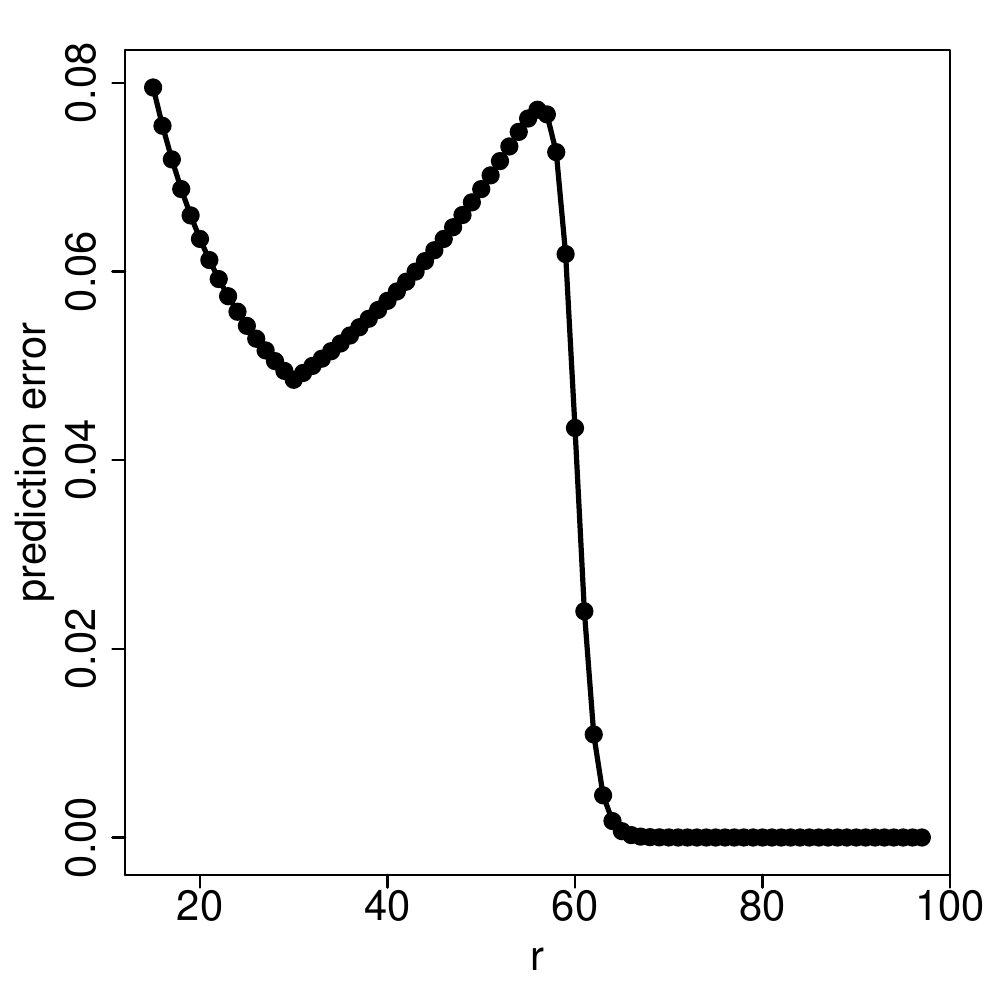}
		\caption{Prediction error}
	\end{subfigure}
	\caption{
	Figures plotting the resolution bias in \eqref{eq:approx_error_double}, as well as the corresponding prediction error with $\tau^2=0$, against the resolution $r$. 
	}\label{fig:double_descent}
\end{figure}                                                                                
With this insight, it is easy to demonstrate multiple-descent phenomenon for as many descents as we want. For example, 
we can take 
\begin{align}\label{eq:approx_error_multiple}
    A(r) = 
    \begin{cases}
    \I\{r \le \underline{r}_1\} \cdot r^{-1} + \I\{r > \underline{r}_1\} \cdot \frac{1 + \exp(\underline{r}_1 - \overline{r}_1)}{\underline{r}_1} \cdot \frac{1}{1 + \exp(r - \overline{r}_1)}, & \text{if } r \le \overline{r}_1, \\
    c_2 \I\{r \le \underline{r}_2\} \cdot r^{-1} + c_2 \I\{r > \underline{r}_2\} \cdot \frac{1 + \exp(\underline{r}_2 - \overline{r}_2)}{\underline{r}_2} \cdot \frac{1}{1 + \exp(r - \overline{r}_2)},
    & \text{if }  \overline{r}_1 < r \le \overline{r}_2, \\
    c_3 \I\{r \le \underline{r}_3\} \cdot r^{-1} + c_3 \I\{r > \underline{r}_3\} \cdot \frac{1 + \exp(\underline{r}_3 - \overline{r}_3)}{\underline{r}_3} \cdot \frac{1}{1 + \exp(r - \overline{r}_3)},
    & \text{if }  \overline{r}_2 < r \le \overline{r}_3,\\
    \ \ \ldots 
    \end{cases}
\end{align}
where 
$\underline{r}_1 \le \overline{r}_1 \le \underline{r}_2 \le \overline{r}_2 \le \underline{r}_3 \le \overline{r}_3 \le \ldots$ 
and $c_k$'s are chosen such that $A(r)$ is a continuous function of $r$. 
Figure \ref{fig:multiple_descent}(a) plots the resolution bias $A(r)$ against the resolution $r$ when 
$\overline{r}_k = \underline{r}_k + 30 = 60k$ for $k\ge 1$. 
From Figure \ref{fig:multiple_descent}(a), 
we can see that, as $r$ increases, 
the resolution bias keeps repeating the pattern in Figure \ref{fig:double_descent}(a), i.e., 
the importance of added covariates keeps fluctuating. 
Figure \ref{fig:multiple_descent}(b) plots the logarithm of the average prediction error in \eqref{eq:pred_loss_linear} against the resolution when the sample size $n=300$ and the intrinsic error $\tau^2=0$. 
Clearly, Figure \ref{fig:multiple_descent}(b) exhibits a multiple-descent phenomenon. 
However, in contrast to Figure \ref{fig:double_descent}(b), the prediction error does not die down in the end. 
This is because the resolution bias in Figure \ref{fig:double_descent}(a) decays exponentially, while that in Figure \ref{fig:multiple_descent}(a) interweaves between exponential and polynomial decays, not covered by our theorems.

\begin{figure}[ht]
	\centering
	\begin{subfigure}{.39\textwidth}
		\centering
		\includegraphics[width=1\linewidth]{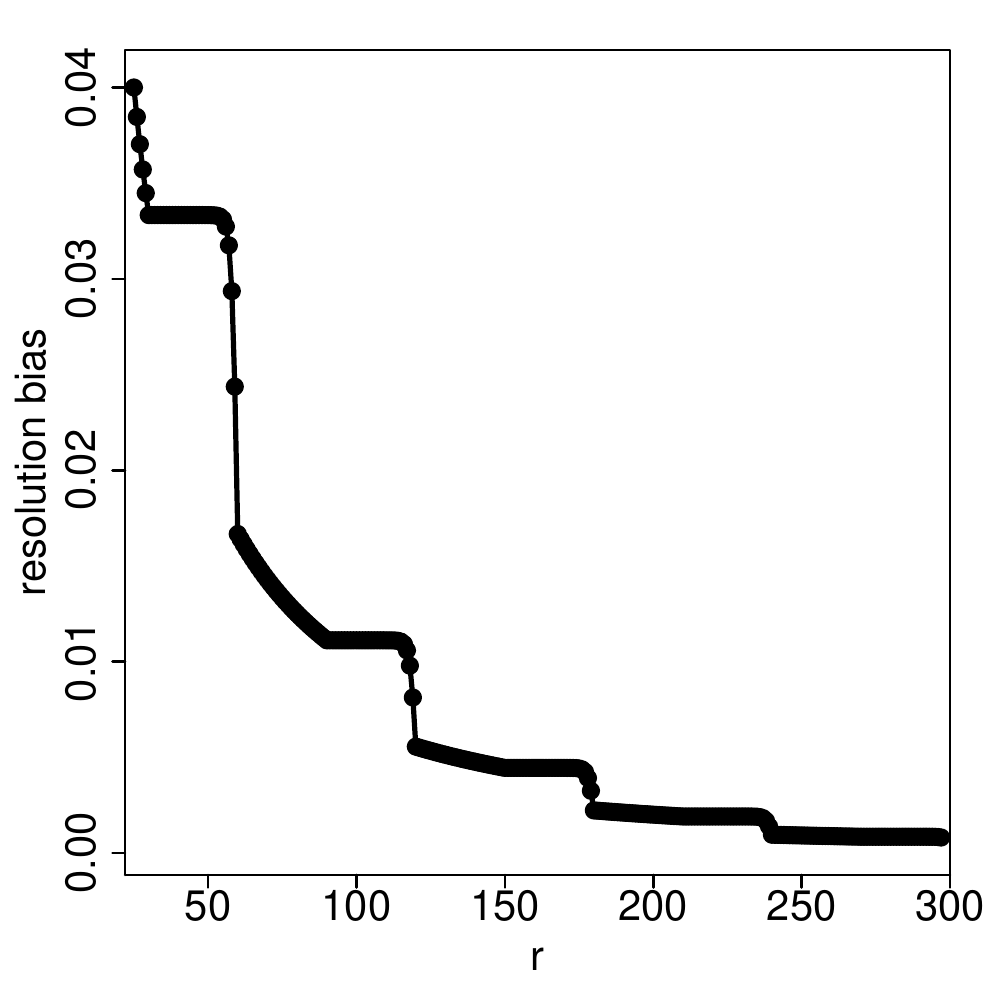}
		\caption{\centering Resolution bias}
	\end{subfigure}%
	\begin{subfigure}{.39\textwidth}
		\centering
		\includegraphics[width=1\linewidth]{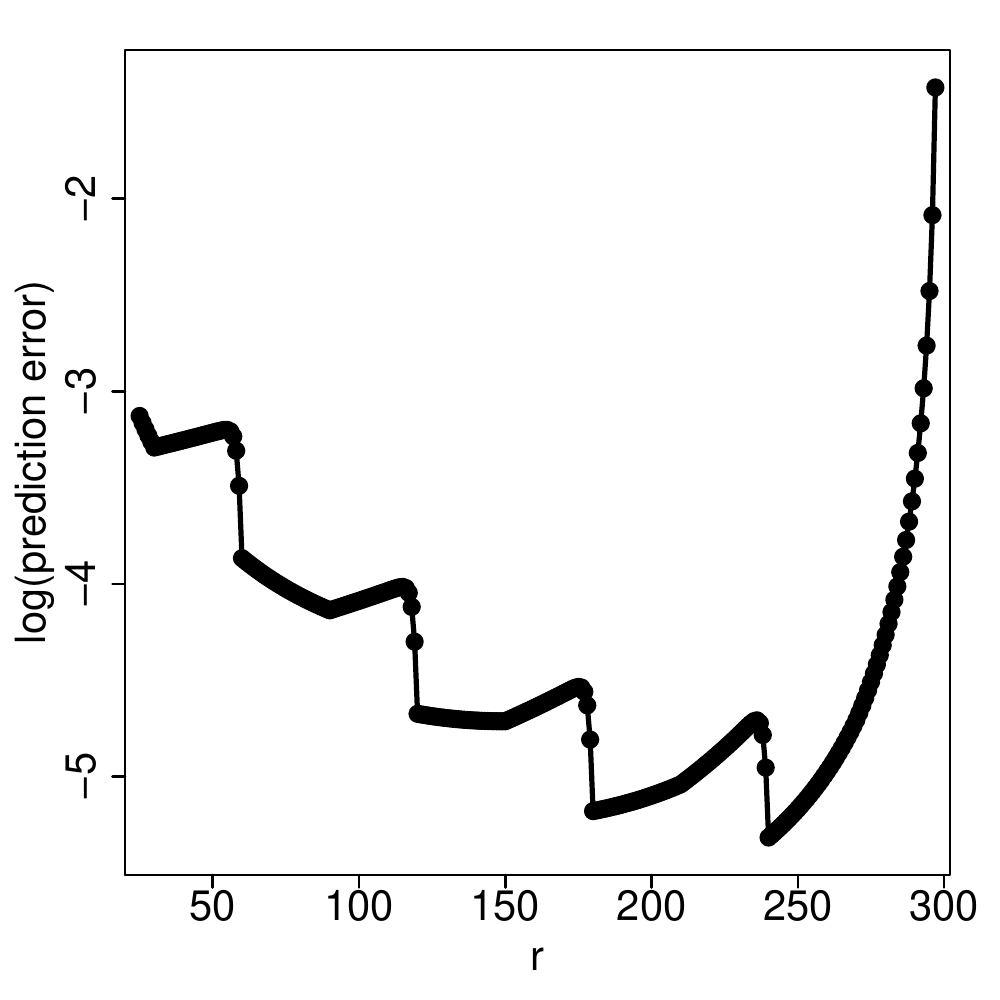}
		\caption{Prediction error}
	\end{subfigure}
	\caption{
	Figures plotting the resolution bias in \eqref{eq:approx_error_multiple}, as well as the corresponding prediction error (with $\tau^2=0$), against the resolution $r$. 
	}\label{fig:multiple_descent}
\end{figure}  

From the above discussion, it is not difficult to see that double or multiple descent phenomena are driven by the varying decay of resolution bias and inflation of the estimation error. 
Depending on which of these two terms is dominating, the prediction error can either decrease or increase, and can thus exhibit multiple descent patterns.  
{\newchange A reviewer points out that the multiple descent phenomenon can also occur when most of the covariates are irrelevant and the relevant ones appear sporadically.}
Such phenomena are also not restricted to regression settings. For example, in the midst of revising this article,
we learned about \citet{liang2020multiple}, which demonstrated multiple descent phenomena in kernel machines and neural networks.

We remark that, for any monotonically decreasing function $A(r)$,
we can construct a linear model with $A(r)$ as its decay rate, so
all the examples above are realizable. Let
$X_0 = 1$, $\{X_1, X_2, X_3, \ldots\}$ be i.i.d standard normal random variables, and $\eta \sim \mathcal{N}(0, 
\sigma^2_{\eta})$.  
Define $\beta_0$ to be any constant, and $\beta_r = \sqrt{A(r-1) - A(r)},$ for any $r \ge 1.$
Then the corresponding linear model \eqref{eq:linear_model} has the desired resolution bias $A(r)$. We will use this construction in the following simulation study.

\begin{figure}[ht]
	\centering
	\begin{subfigure}{.33\textwidth}
		\centering 
		\includegraphics[width=1\linewidth]{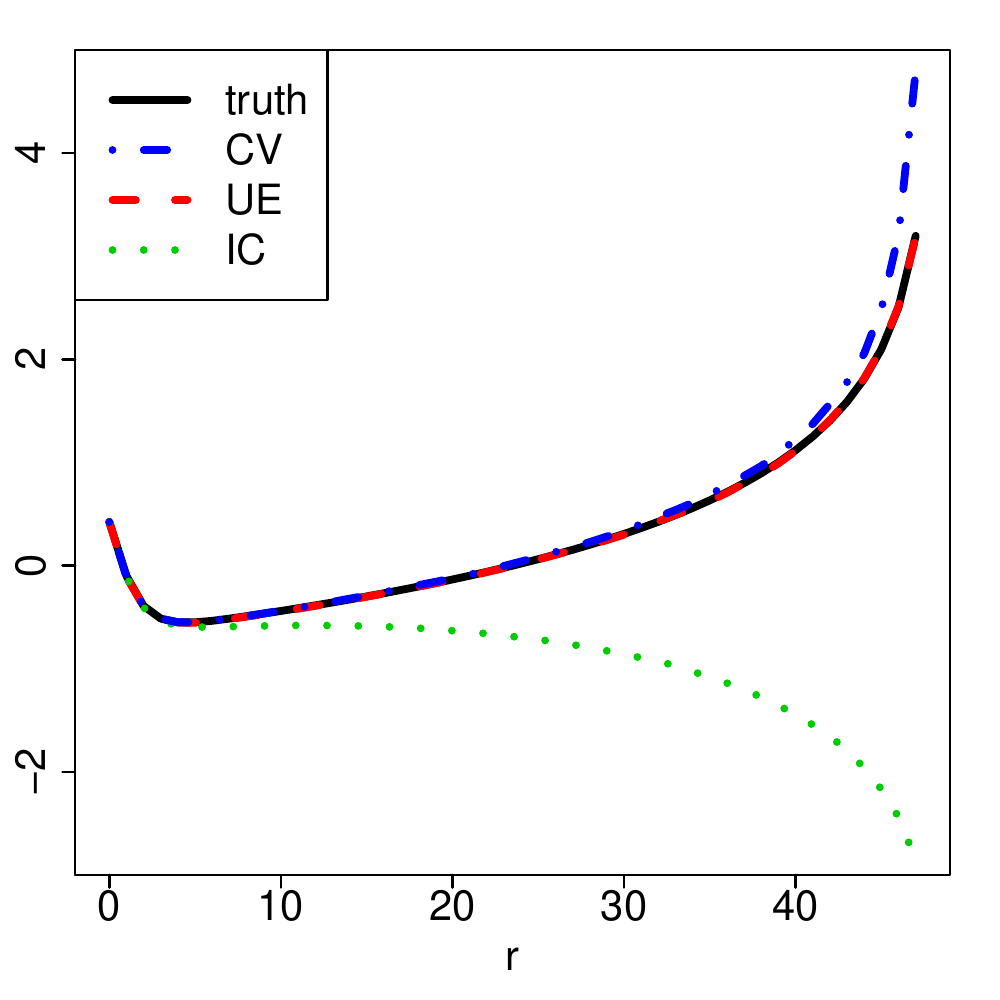}
		\caption{Exponential, $\tau^2=\frac{1}{2}$}
	\end{subfigure}%
	\begin{subfigure}{.33\textwidth}
		\centering
		\includegraphics[width=1\linewidth]{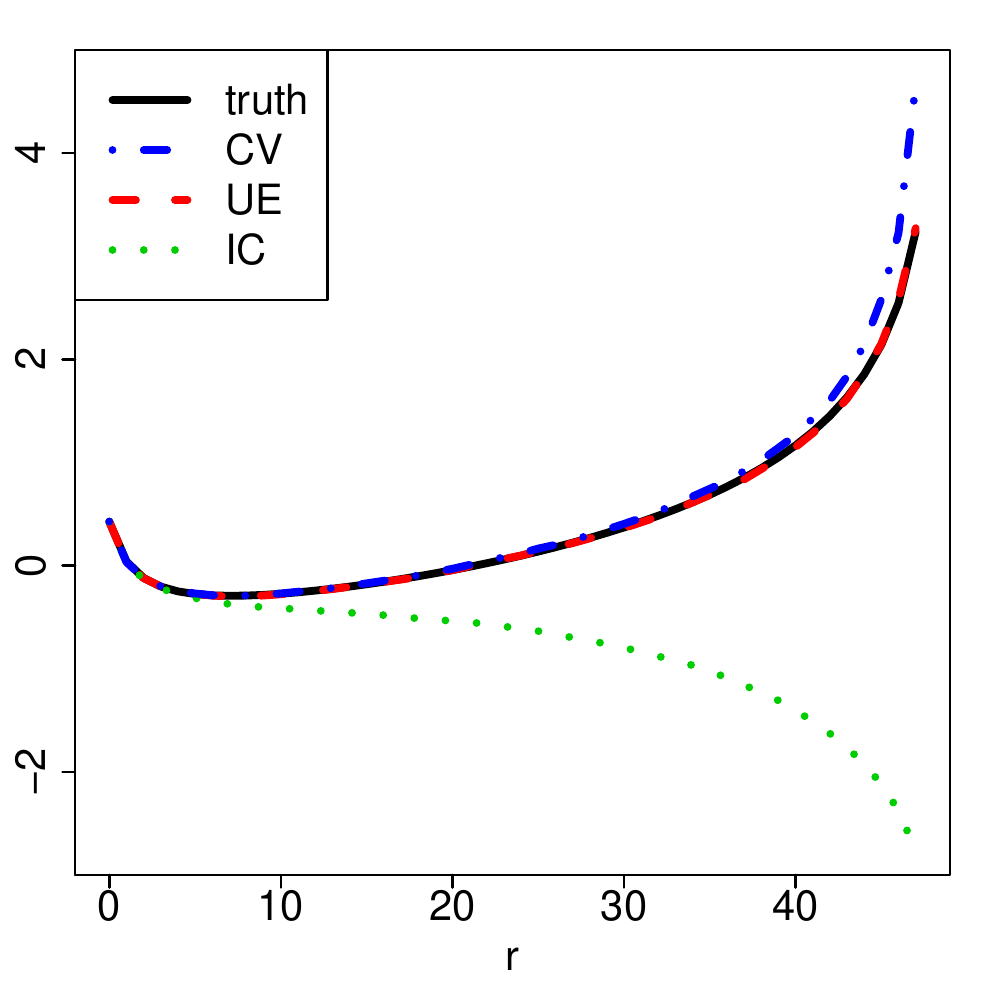}
		\caption{Polynomial, $\tau^2=\frac{1}{2}$}
	\end{subfigure}%
	\begin{subfigure}{.33\textwidth}
		\centering
		\includegraphics[width=1\linewidth]{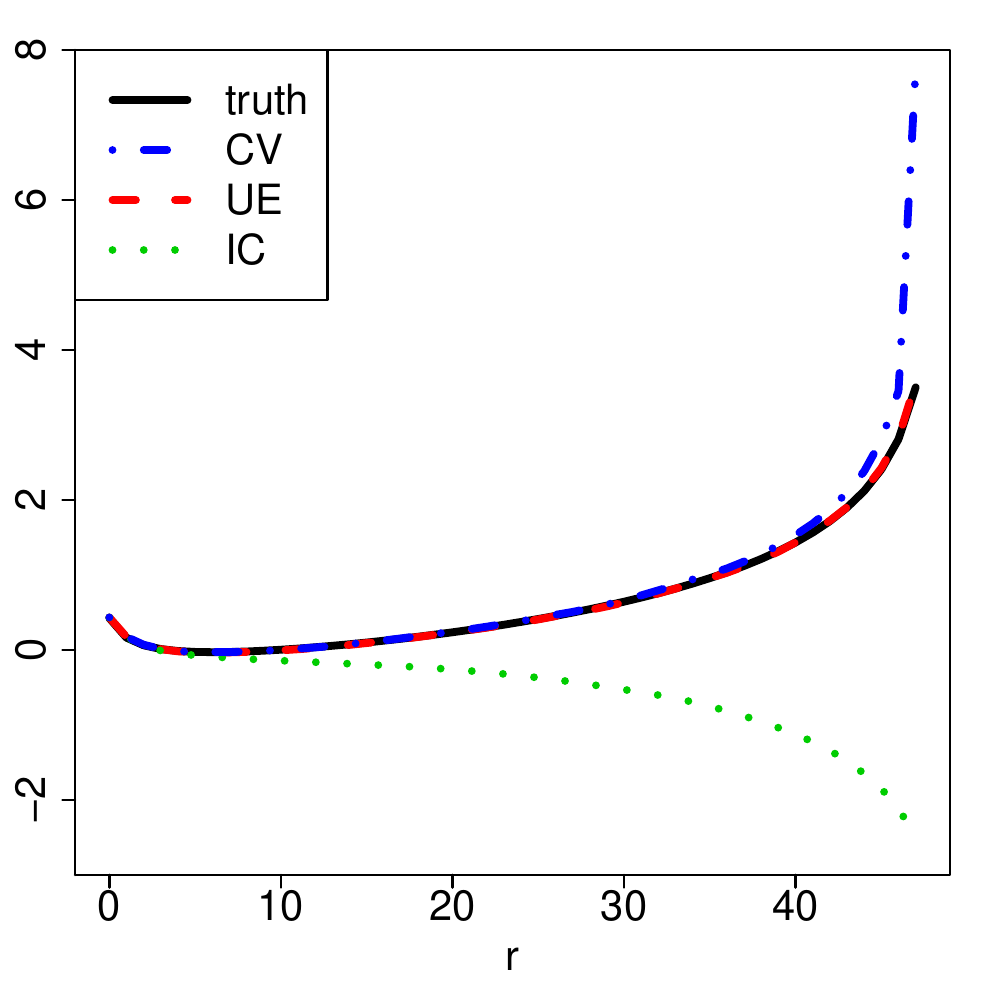}
		\caption{Logarithmic,$\tau^2=\frac{1}{2}$}
	\end{subfigure}
	
	\begin{subfigure}{.33\textwidth}
		\centering
		\includegraphics[width=1\linewidth]{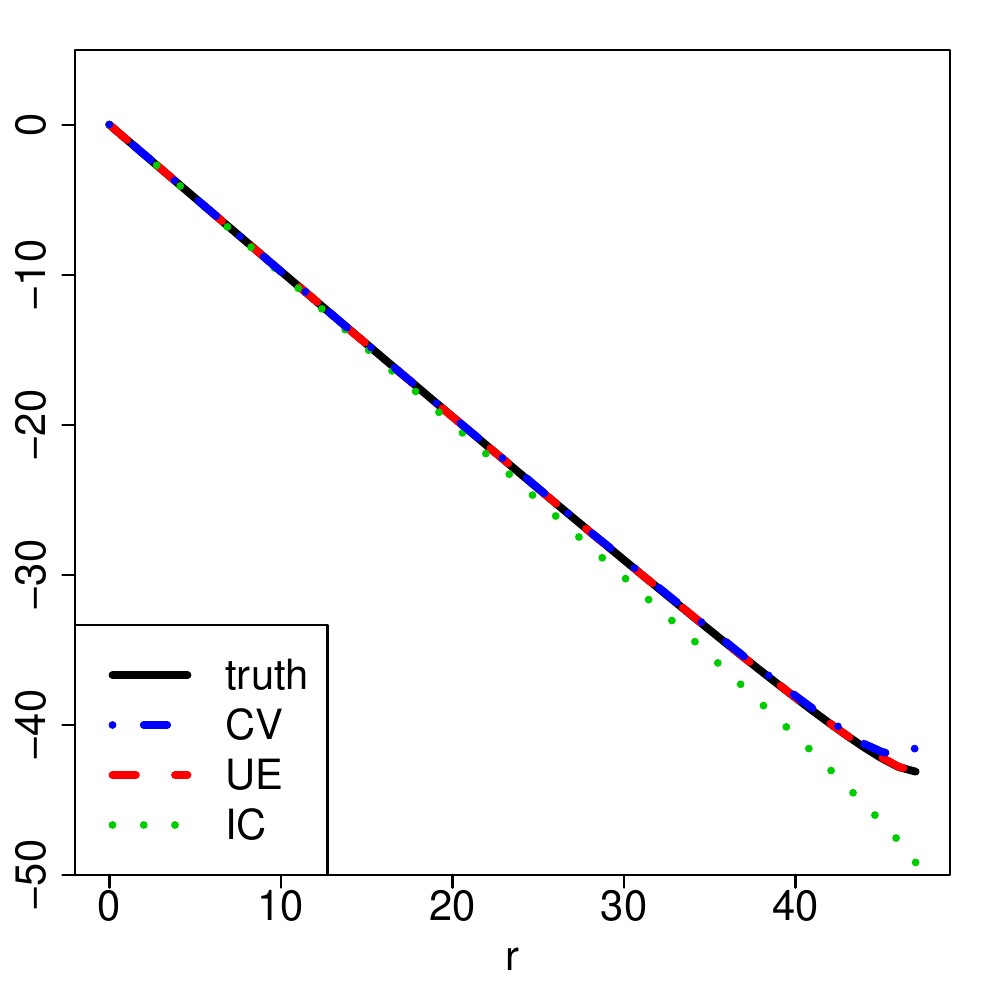}
		\caption{\centering Exponential, $\tau^2 = 0$}
	\end{subfigure}%
	\begin{subfigure}{.33\textwidth}
		\centering
		\includegraphics[width=1\linewidth]{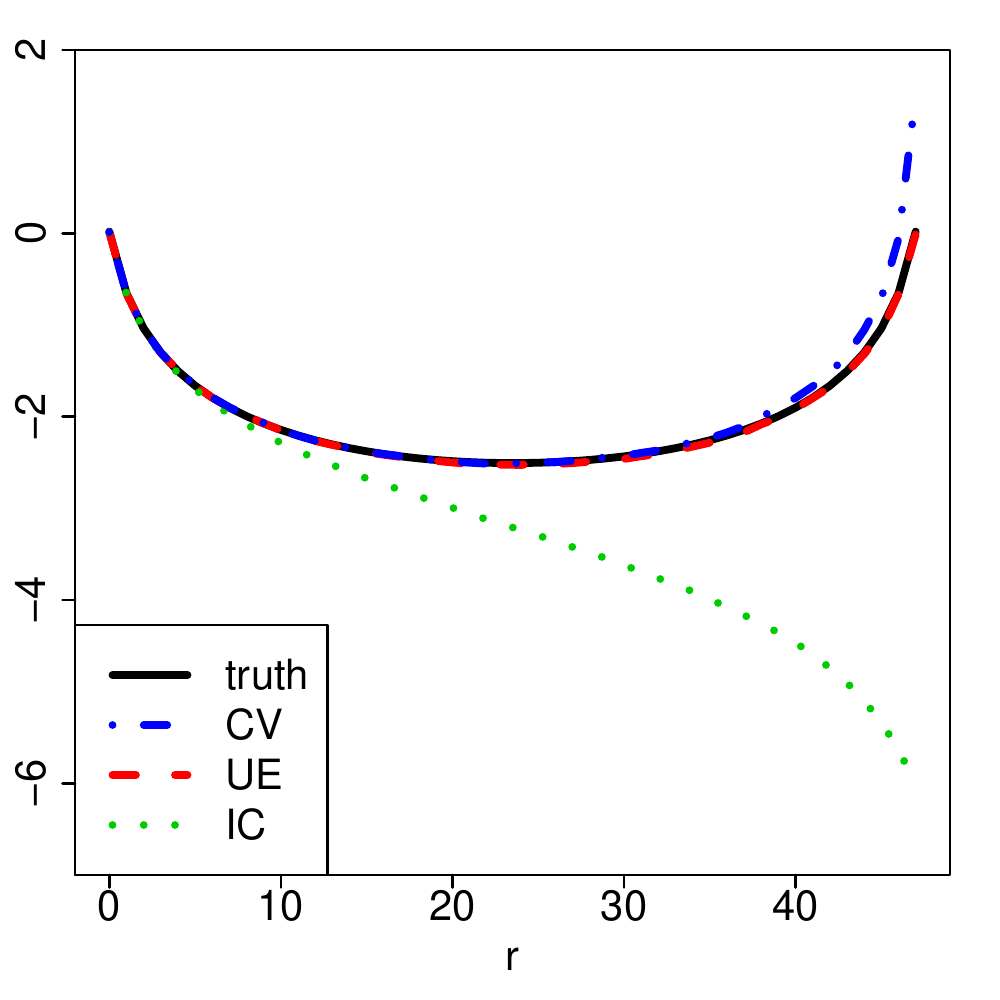}
		\caption{Polynomial, $\tau^2 = 0$}
	\end{subfigure}%
	\begin{subfigure}{.33\textwidth}
		\centering
		\includegraphics[width=1\linewidth]{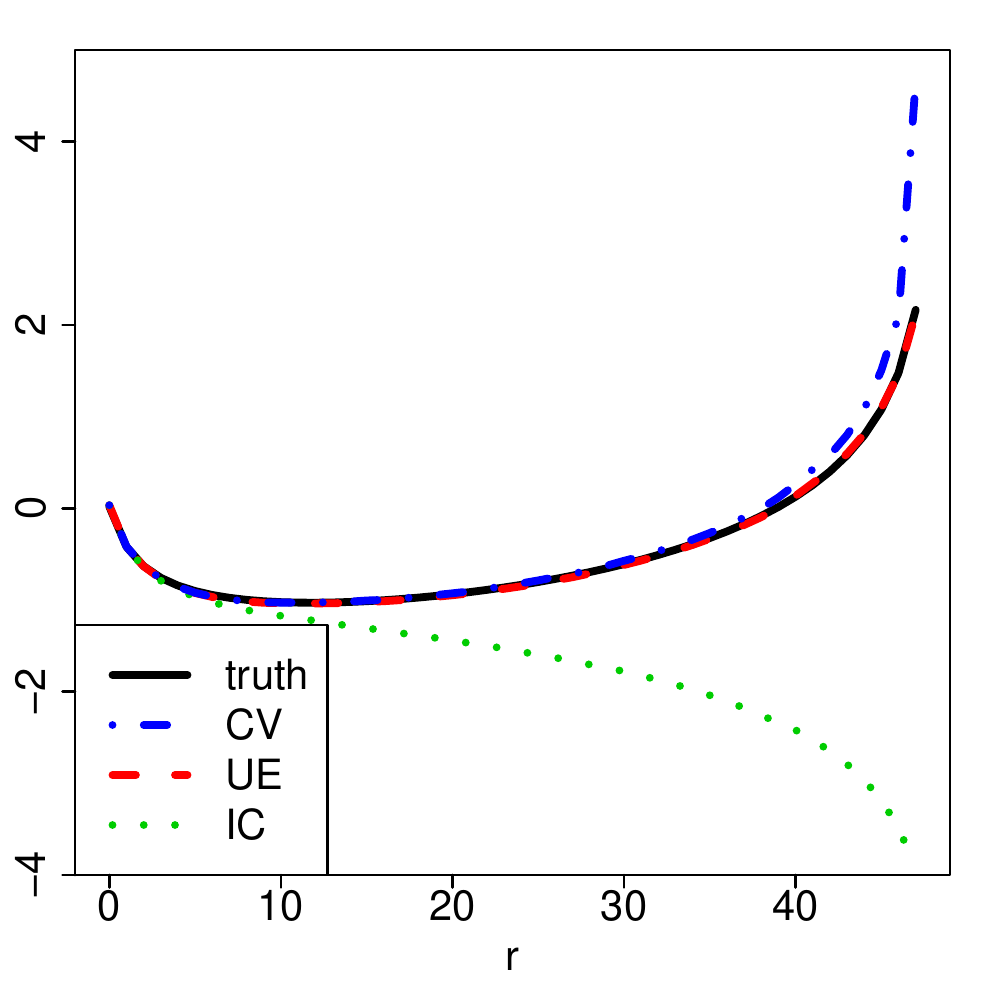}
		\caption{Logarithmic, $\tau^2 = 0$}
	\end{subfigure}                
	\caption{
		The performance of three strategies 
		CV, UE and IC
		for estimating prediction error when $n=50$ and respectively with $\tau^2 = 0.5$ (top row) and $\tau^2 = 0$ (bottom row) . 
		The $x$-axis denotes resolution level $r$, 
		and the $y$-axis denotes the logarithm of the 
		true and estimated
		average prediction error over 500 simulated training sets. 
		The resolution biases follow the decay rates of $e^{-r}$, $r^{-1}$ and $\{\log(r)\}^{-1}$, respectively, for the three scenarios in (a)--(f).  
	}                                                                         
	\label{fig:estimate_pred_loss}                                               
\end{figure} 

\subsection{Finite sample performance -- Preliminary findings}\label{sec:finite}  

Whereas theoretical results are extremely useful for providing deep understanding and revealing new insights, we must be mindful that they may or may not match the empirical findings with finite samples. As a first step towards a comprehensive (and very challenging) study of our MR framework with
finite samples,  we conducted a simulation study using the normal linear model in Section~\ref{sec:linear_model}. The simplicity of this model allows us to compute the optimal resolution and minimal prediction error exactly for any given $n (\ge 3)$, which can then be used as benchmarks to investigate the performance of various estimators for the optimal resolution. However, the model is still sufficiently rich and realistic to both confirm some of the asymptotic findings, including the resistance to over-fitting in the absence of intrinsic variation, and to reveal complications with finite samples that are not captured by the asymptotic results. 

Due to space limitations, we report only findings on three ways of estimating prediction error curves in finite samples as functions of the resolution $r$, which then can be minimized for estimating optimal resolution. The three methods are based on cross validation (CV), an unbiased estimator (UE), and an information criteria (IC); see Appendix~\ref{sec:practical} for details and all other findings. Figure~\ref{fig:estimate_pred_loss} plots the logarithm of averages of the three estimators over 500 Monte Carlo replications against the resolution level $r$, under different choices of the decay rate $A(r)$ and intrinsic variance $\tau^2$, all with $n=50$. 
                              
We see that UE worked well by being unbiased, CV performed well except when venturing into the over-fitting region, and IC failed badly other than when $r$ is small. 
The only exception is when there is no bias-variance trade-off, as depicted in plot (d), where the optimal resolution reaches the sample size, in which case the gross over-fitting tendency of IC brings benefit instead of damage. All six curve shapes are consistent with the theoretical findings in Theorem~\ref{th:cont} (for $\tau^2>0$) and in  Theorem~\ref{th:contzero} (for $\tau^2=0$).

\section{Predictions with Infinitely Many Categorical Predictors}\label{sec:theory_tree}

\subsection{Regression tree models with infinitely many categorical covariates}\label{sec:regression_tree}

We now introduce regression tree models with infinitely many categorical covariates, and then use them to illustrate some general results on rate optimal resolution and prediction. 
Specifically, we assume both target and training populations satisfy
\begin{align}\label{eq:binary_model}
    X_1, X_2, \ldots \text{are i.i.d. with } \pr(X_i = k) = M^{-1} \text{ for } k = 1,2, \ldots, M, 
    \quad 
    \V(Y) < \infty,  
\end{align}
and the dependence of $Y$ on $\{X_1, X_2, \ldots\}$ is arbitrary, 
where $M\ge 2$. That is, 
\eqref{eq:binary_model}
is a regression tree in which each covariate increases the depth of the tree by one, and hence it is a tree of (potentially) infinite depth.
The loss function is again the square loss: $\loss_{\odot}(y, \hat{y}) = \loss (y, \hat{y}) = (y - \hat{y})^2$, 
and the prediction function at resolution $r$ is fully saturated, that is, it can have different values for different covariates up to resolution $r$, 
$$g(\vec{\bm{x}}_{r}, \bm{\theta}_r) = \sum_{\vec{\bm{a}}_{r}\in \{1, 2, \ldots, M\}^{r+1}} \I (\vec{\bm{x}}_{r}=\vec{\bm{a}}_{r}) \bm{\theta}_{r}(\vec{\bm{a}}_{r}), $$ 
where the summation is essentially over $M^r$ terms because $X_0\equiv 1$, 
$\dim(\bm{\theta}_r) = M^r$ and $\bm{\theta}_r(\vec{\bm{a}}_{r})$ denotes the coordinate corresponding to covariate value $\vec{\bm{a}}_{r}$.

Given a training set $\trainingset_n$, for each resolution $r$, we use $n(\vec{\bm{x}}_{r})$ to denote the number of units with covariate value $\vec{\bm{x}}_{r}$. 
When $n(\vec{\bm{x}}_{r}) >0$, 
minimizing the empirical risk will lead to taking the 
sample average of the outcome of these $n(\vec{\bm{x}}_{r})$ individuals. The matter is more complicated when $n(\vec{\bm{x}}_{r}) = 0$. Here we adopt the ``highest-resolution imputation".  
That is, for each individual of interest, 
we find training samples that have the same covariates up to a resolution that is as large as possible but is truncated at $r$,  
and then use their average response as a prediction for this individual. Note that this estimator is unique conditioning on the given order of the predictors. 
Consequently, our estimator for the parameter $\bm{\theta}_{r}$ has the following form:
\begin{align}\label{eq:theta_binary}
\hat{\bm{\theta}}_{r}(\vec{\bm{x}}_{r}) & = 
\begin{cases}
\frac{1}{n(\vec{\bm{x}}_{r})} \sum_{i: \vec{\bm{x}}_{ir}=\vec{\bm{x}}_{r}} Y_i, & \text{if } n(\vec{\bm{x}}_{r}) >0, \\
\frac{1}{n(\vec{\bm{x}}_{k})} \sum_{i: \vec{\bm{x}}_{ik}=\vec{\bm{x}}_{k}} Y_i, & \text{if } n(\vec{\bm{x}}_{k})>0 \text{ and }  n(\vec{\bm{x}}_{k+1})=0, \text{ for } 0 \le k < r.
\end{cases}
\end{align}
This estimator is always well-defined, because $n(\vec{\bm{x}}_{0})=n>0$.
Under  model \eqref{eq:binary_model}, 
we can derive that 
(i)
the ultimate risk is $\tau^2 =\E[
\Var(Y \mid \vec{\bm{X}}_\infty)]$, 
(ii)
the resolution bias is 
\begin{align*}
A(r) 
= \sum_{k=r+1}^{\infty}
\left\{
\E[
\Var(Y \mid \vec{\bm{X}}_{k-1})
] - \E[
\Var(Y \mid \vec{\bm{X}}_{k})
]\right\},
\end{align*}
and (iii) the estimation error is $\varepsilon(r,\trainingset_n) = 
\E[
\hat{\bm{\theta}}_{r}(\vec{\bm{X}}_{r}) - \E(Y\mid \vec{\bm{X}}_{r})
]^2.$ 
The expectation of $\varepsilon(r,\trainingset_n)$ 
over the training sets has three terms,
as indicated and simplified below: 
\begin{align}\label{eq:epsilon_r_n_binary}
\varepsilon(r,n) & = 
\left[ A(r) +  \tau^2 \right]  \cdot 
\E_n\left[ \frac{\I( n(\vec{\bm{1}}_{r}) > 0 )}{n(\vec{\bm{1}}_{r})}
\right]\hskip 1.6in  ({\rm Var\ when\ } n(\vec{\bm{1}}_{r})>0)
\nonumber
\\
& \quad \ + 
\sum_{k=0}^{r-1} 
\left[A(k) +  \tau^2 \right]
\cdot
\E_n\left[
\frac{\I(n(\vec{\bm{1}}_{k}) > 0, n(\vec{\bm{1}}_{k+1}) = 0)}{n(\vec{\bm{1}}_{k})} \right] \qquad \quad ({\rm Var\ when\ } n(\vec{\bm{1}}_{r})=0)
\nonumber
\\
& \quad \ 
+ 
\sum_{k=0}^{r-1} 
\left[
A(k) - A(r)
\right]
\cdot 
\E_n\left[ 
\I(n(\vec{\bm{1}}_{k}) > 0, n(\vec{\bm{1}}_{k+1}) = 0)
\right]\qquad \ \ \  ({\rm Bias\ when\ } n(\vec{\bm{1}}_{r})=0)
\nonumber
\\
& = 
\E_n \left[ \frac{A(\mathcal{K} \wedge r) + \tau^2 }{n\left( \vec{\bm{1}}_{\mathcal{K}\wedge r} \right) } \right]
+ 
\sum_{k=0}^{r-1} 
[A(k) - A(r)]
\cdot 
\E_n\left[ 
\I(n(\vec{\bm{1}}_{k}) > 0, n(\vec{\bm{1}}_{k+1}) = 0)
\right],
\end{align}
where 
$n(\vec{\bm{1}}_{k})$ denotes the number of training samples with covariate value $\vec{\bm{x}}_{ik} = \vec{\bm{1}}_{k}$, 
$\mathcal{K}$ is the maximum integer $k$ such that $n(\vec{\bm{1}}_{k}) > 0$, 
and $\mathcal{K} \wedge r = \min\{ \mathcal{K}, r \}$. 
Note that here
$n(\vec{\bm{1}}_{k}) \sim \text{Binomial}(n, M^{-k})$ and $n(\vec{\bm{1}}_{k+1}) \mid n(\vec{\bm{1}}_{k}) \sim \text{Binomial}(n(\vec{\bm{1}}_{k}), M^{-1})$ for any $k\ge 0$.
We stress that it is the assumption that all $\vec{\bm{x}}_{k}$'s are uniformly distributed that permits us to replace $n(\vec{\bm{x}}_{k})$ by 
$n(\vec{\bm{1}}_{k})$, which greatly simplifies the derivation; see Appendix~\ref{app:theory_tree} for deriving error decomposition under model \eqref{eq:binary_model}.

\subsection{General results inspired and illustrated by regression tree}\label{sec:general_regression_tree}

Under \eqref{eq:binary_model}, 
when $\tau^2 > 0$, we can show 
that 
for any sequence $\{r_n\}$, 
a necessary condition for  $\varepsilon(r_n,n) = o(1)$ is that $\lim_{n\rightarrow \infty} M^{r_n}/n \rightarrow 0$. 
Moreover, 
under this condition, the convergence rate of $\varepsilon(r,n)$ is $M^r/n$, i.e., $\varepsilon(r,n) \asymp M^r/n \asymp \dim(\bm{\theta}_r)/n$. 
Again, these intuitive results require some rather technical proofs,
given in the Appendices.

This inspires us to consider more general cases with categorical covariates in which $\dim (\bm{\theta}_r)\asymp  \alpha^r$ for some $\alpha>1$; for example, $\alpha=2$ if the covariates are all binary,  and 
the prediction function $g(\vec{\bm{x}}_{r}, \bm{\theta}_r)$ can have different values for each of the $2^r$ possible values of $\vec{\bm{x}}_{r}$. This contrasts with the previous case featuring continuous covariates in which the dimension of parameters increases polynomially with the resolution. 
The following theorem is the counterpart of Theorem~\ref{th:cont} under the exponential estimation error. 

\begin{theorem}\label{th:disc}
Same notation and setup as in Theorem~\ref{th:cont}, except that we now assume \textit{exponential estimation error}:  $\varepsilon(r,n)\asymp {\alpha}^r/n$, 
for some $\alpha>1$. As in Theorem~\ref{th:cont}, all $\xi>0$.
\begin{itemize}
	\item[(i)] \underline{Hard Thresholding:  $A(r)=0$ for $r\geq r_0$, and $A(r)>0$ for $r< r_0$.}  Then $R_n \asymp 1$ with the constraint that $\liminf_{n\rightarrow \infty} R_n\geq r_0$, and  
	$L_n \asymp n^{-1}$. 
	
	\smallskip
	
	\item[(ii)] 
	\underline{Exponential Decay: $A(r) \asymp e^{-\xi r}$.}  Then $R_n =[\log(n)+\log(a_n)][\log(\alpha)+\xi]^{-1}$ with $a_n\asymp 1$; and $L_n\asymp n^{-\xi/\{\log(\alpha)+\xi\}}$. 
	
	\smallskip
	
	\item[(iii)] \underline{Polynomial Decay:  $A(r) \asymp r^{-\xi}$.} Then
	$R_n = a_n\log(n)$ with $a_n$ satisfying 
	$a_n \asymp 1$
	and 
	$n^{a_n \log(\alpha)-1}\log^\xi (n) =O(1);
	$
and $L_n\asymp \log^{-\xi}(n)$. 
	
	\smallskip
	
	\item[(iv)] \underline{Logarithmic Decay:  $A(r) \asymp \log^{-\xi}(r)$.}
	Then $R_n=a_n \log(n)$ with $a_n$ satisfying 
	$$\liminf_{n \rightarrow \infty} 
	\frac{\log(a_n)}{\log\log(n)} >-1,  \quad {\rm and} \quad  \frac{[\log\log(n)]^\xi}{n^{1-a_n \log(\alpha)}} =  O( 1 );
	$$
	and $L_n\asymp [\log\log(n)]^{-\xi}$.
\end{itemize}
\end{theorem}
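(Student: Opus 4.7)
The plan is to follow the same bias-variance balancing template that underlies Theorem~\ref{th:cont}, but now with estimation error growing exponentially rather than polynomially in the resolution. Since $L_n = A(R_n) + \varepsilon(R_n,n)$ must be minimized and $\varepsilon(r,n)\asymp \alpha^r/n$, I would in each decay regime (a) exhibit an $R_n$ such that both $A(R_n)$ and $\alpha^{R_n}/n$ are $O(L_n)$, and then (b) show that every sequence $\{r_n\}$ suffers error at least $\gtrsim L_n$. The lower bound in (b) is cleanest via a two-case dichotomy: choose a threshold $r_n^\ast$ that (approximately) equates $A(r)$ with $\alpha^r/n$; either $r_n \le r_n^\ast$ and then $A(r_n) \ge A(r_n^\ast) \asymp L_n$ by monotonicity of $A(\cdot)$, or $r_n > r_n^\ast$ and then $\varepsilon(r_n, n) \ge \alpha^{r_n^\ast}/n \asymp L_n$.

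For case (i), once $R_n \ge r_0$ the resolution bias vanishes, so the error reduces to $\alpha^{R_n}/n$; any bounded $R_n \ge r_0$ gives $L_n \asymp n^{-1}$, while smaller $R_n$ must incur a strictly positive $A(R_n)$. For case (ii), the equation $e^{-\xi r} = \alpha^r/n$ yields $r = \log(n)/[\log\alpha + \xi]$, so $R_n = [\log(n) + \log(a_n)]/[\log\alpha + \xi]$ with $a_n \asymp 1$ balances the two contributions at order $n^{-\xi/[\log\alpha+\xi]}$. For case (iii), the slowly decaying bias $r^{-\xi}$ forces an asymmetric balance: taking $R_n = a_n \log(n)$ leaves the bias at order $[\log(n)]^{-\xi}$, and the condition $n^{a_n\log(\alpha) - 1}\log^\xi(n) = O(1)$ is precisely what ensures that $\alpha^{R_n}/n$ does not exceed this order; the lower bound follows because $r_n = o(\log n)$ makes the bias strictly worse than $\log^{-\xi}(n)$, while $r_n \log\alpha - \log n$ unbounded above makes the variance blow up. Case (iv) is analogous with the inner logarithm: $R_n = a_n \log(n)$ yields bias $[\log(a_n \log n)]^{-\xi} \asymp [\log\log(n)]^{-\xi}$ exactly when $\liminf \log(a_n)/\log\log(n) > -1$, and the stated estimation-error condition keeps $\alpha^{R_n}/n$ within the bias order.

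The bulk of the remaining work amounts to calculus: verifying the matching upper and lower bounds and translating the balance into the precise admissible ranges for $a_n$. The step I expect to be the main obstacle is case (iv). Because the bias involves a double logarithm of the sample size, the dependence on $R_n$ is unusually weak, and one must check carefully that $\log(a_n \log n) \asymp \log\log(n)$ whenever $a_n$ is permitted to decay; this is exactly the content of $\liminf \log(a_n)/\log\log(n) > -1$, which forbids $a_n$ from shrinking faster than any inverse power of $\log n$. Ruling out lower-order perturbations that would contaminate $[\log\log(n)]^{-\xi}$ requires slightly more delicate bookkeeping than in the polynomial-bias case, and is where the tightness of the stated sufficient conditions must be earned.
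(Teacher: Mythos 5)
Your proposal is correct and follows essentially the same route as the paper: identify the resolution at which $A(r)$ and $\alpha^{r}/n$ balance, verify that the stated $R_n$ and conditions on $a_n$ keep both terms at the target order, and show no sequence can do better. The only cosmetic difference is that you package the lower bound as a direct threshold dichotomy (either $r_n\le r_n^{\ast}$ and the bias is too large, or $r_n>r_n^{\ast}$ and the estimation error is), whereas the paper argues by contradiction on subsequences; the two are equivalent here since $A(\cdot)$ is non-increasing and $\alpha^{r}/n$ is increasing in $r$, and your identification of case (iv) as the delicate step matches where the paper's bookkeeping is also heaviest.
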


\subsection{Specific results for deterministic regression tree}\label{sec:determin_categorial}

Similar to Section \ref{sec:linear_zero_tau}, 
we consider the case in which the ultimate risk $\tau^2=0$, and we will see again below how this leads to rather different asymptotic behavior. But unlike Section \ref{sec:linear_zero_tau}, even when we restrict ourselves to the regression tree model, the exact asymptotic rate for the estimation error is still difficult to obtain other than when $A(r)$ has a hard-thresholding decay. We therefore adopt a two-step strategy. We first establish an upper bound of the estimation error, yielding a corresponding upper bound for the prediction error, which can then be optimized to obtain the minimal upper-bound rate. 
We then prove that these minimal upper-bound rates are also the maximal lower-bound rates, except for a couple of cases where our proof fails, and hence whether the upper-bound rates are optimal or sharp
is still an open problem.

Specifically, as proved in the Appendices, the estimation error can be bounded by 
\begin{align*}
	\varepsilon(r,n) \le \frac{2M}{n} \sum_{k=0}^r M^k A(r) \equiv \overline{\varepsilon}(r, n).
\end{align*}
Furthermore,  $\overline{\varepsilon}(r, n)$ under varying decay rates for $A(r)$ has the following form:
\begin{align}\label{eq:upper_bound_rate}
	\overline{\varepsilon}(r, n) 
	\asymp 
	\begin{cases}
	n^{-1}, & \text{ if } A(r) \text{ has a hard threshold or } A(r) \asymp e^{-\xi r} \text{ with } \xi>\log(M),\\
	\frac{r}{n}, & \text{ if } A(r) \asymp e^{-\xi r} \text{ with } \xi=\log(M), \\
	A(r)\frac{ M^r}{n}, & \text{ if } A(r) \asymp e^{-\xi r} \text{ with } \xi<\log(M), A(r) \asymp r^{-\xi} \text{ or } A(r) \asymp \log^{-\xi} (r). 
	\end{cases}
\end{align}
From \eqref{eq:upper_bound_rate}, 
compared to $\varepsilon(r, n) \asymp M^r /n$ when $\tau^2 > 0$, 
we can see that the rate of the estimation error depends also on the resolution bias
and converges to zero more quickly;  this is similar to the discussion in Section  \ref{sec:linear_zero_tau} under the linear model. 
Moreover, $M^r /n = o(1)$ is no longer necessary for $\varepsilon(r, n) = o(1)$. 
In particular and somehow surprisingly, 
when the resolution bias decays exponentially with rates faster than or equal to $M^{-r}$, 
the estimation error behaves like the usual parameter setting as in Theorem \ref{th:cont} with a fixed number of (or $r$) unknown parameters at resolution $r$,
even the model at each resolution $r$ allows potentially $M^r$ unknown parameters. 

The following theorem summarizes sufficient conditions for the prediction error to achieve certain (upper-bound) rates 
under varying decay rate of the resolution bias. 
 
\begin{theorem}\label{thm:binary_zero_tau}
Under the model \eqref{eq:binary_model} with $\tau^2=0$ and $L^2$ loss, 
let $L_n = A(R_n) + \varepsilon(R_n, n)  \le A(R_n) + \overline{\varepsilon}(R_n, n)\equiv \overline{L}_n$. 
The rate-optimal resolution $R_n$ or $\overline{R}_n$ and the corresponding optimal $L_n$ or $\overline{L}_n$ (respectively) have the following forms under each $A(r)$, where $\xi>0$. 
\begin{itemize}
	\item[(i)] \underline{Hard Thresholding:  $A(r)=0$ for $r\geq r_0$, and $A(r)>0$ for $r< r_0$.}  Then 
	$R_n$ satisfies that $\liminf_{n\rightarrow \infty}R_n \ge r_0$; 
	and  
	$L_n \asymp (1-M^{-r_0})^n$. 
	\smallskip
	\item[(ii)] \underline{Exponential Decay: $A(r) \asymp e^{-\xi r}$.} 
	\begin{itemize}
	    \item[(a)] If $\xi> \log(M)$, then $\overline{R}_n$ satisfies $n e^{-\xi \overline{R}_n} = O(1)$; and $\overline{L}_n \asymp n^{-1}$. 
	    \item[(b)] If $\xi=\log(M)$, then $\overline{R}_n = a_n \log(n)$ with $a_n$ satisfying $a_n \asymp 1$ and \\ $n^{1-a_n \log(M)}/\log(n) = O(1);$ and $\overline{L}_n\asymp n^{-1}\log(n)$.
	    \item[(c)] If $\xi<\log(M)$, then $\overline{R}_n = a_n \log(n)$ with $a_n$ satisfying  $n^{a_n\log(M) - 1 }  \asymp 1$; and $\overline{L}_n \asymp n^{-\xi / \log(M)}$. 
	\end{itemize}
	\smallskip
	\item[(iii)] \underline{Polynomial Decay:  $A(r) \asymp r^{-\xi}$.}  
	Then
	$\overline{R}_n = a_n \log(n) $ with $a_n$ satisfying 
	$
	a_n \asymp 1
	$
	and 
	$
	n^{a_n \log(M) - 1 } = O(1);
	$
	and $\overline{L}_n \asymp \log^{-\xi}(n)$. 
	\smallskip
	\item[(iv)] \underline{Logarithmic Decay:  $A(r) \asymp \log^{-\xi}(r)$.} 
	Then
	$\overline{R}_n = a_n \log(n) $ with $a_n$ satisfying 
	$$\liminf_{n \rightarrow \infty} 
	\frac{\log(a_n)}{\log\log(n)} >-1,  \quad {\rm and} \quad  n^{a_n \log(M) - 1}=  O( 1 );
	$$
	and $\overline{L}_n \asymp [\log\log(n)]^{-\xi}$.
\end{itemize}
\end{theorem}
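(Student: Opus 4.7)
The plan is to split the proof into two distinct strategies: a direct probabilistic argument for part (i), and an optimization of the already-established upper bound $\overline{L}_n = A(R_n) + \overline{\varepsilon}(R_n, n)$ from \eqref{eq:upper_bound_rate} for parts (ii)--(iv). For part (i), the generic bound $\overline{\varepsilon}(r,n) \asymp 1/n$ in \eqref{eq:upper_bound_rate} is off by an exponential factor from the claimed $(1 - M^{-r_0})^n$, so the argument must be tailored to the hard-thresholding structure. With $\tau^2 = 0$ and $A(r) = 0$ for $r \geq r_0$, the true conditional mean $\mu_\infty$ depends only on $\vec{\bm{x}}_{r_0}$. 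I would therefore split on the event $\mathcal{E}_{r_0} = \{\text{some training sample matches the target on } \vec{\bm{x}}_{r_0}\}$: on $\mathcal{E}_{r_0}$ the highest-resolution imputation rule \eqref{eq:theta_binary} returns $\E(Y\mid \vec{\bm{X}}_{r_0}=\vec{\bm{x}}_{r_0})$ exactly (so the prediction error vanishes), while on $\mathcal{E}_{r_0}^{c}$ the estimator falls back to some lower resolution $k < r_0$ and contributes a bounded error. Since $\pr(\mathcal{E}_{r_0}^c) = (1 - M^{-r_0})^n$ under the uniformity assumption in \eqref{eq:binary_model}, this gives the upper direction; the matching lower bound $L_n \gtrsim (1 - M^{-r_0})^n$ follows from showing that the mean-squared error on $\mathcal{E}_{r_0}^c$ is bounded below by a positive constant, using $\V(Y)<\infty$ together with nontrivial dependence of the true function on at least one of the first $r_0$ covariates.

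For parts (ii)--(iv), I would feed the asymptotic forms in \eqref{eq:upper_bound_rate} into $\overline{L}_n$ and minimize over $R_n$ regime by regime. In (ii)(a), $\overline{\varepsilon}(r, n) \asymp 1/n$ is independent of $r$, so the requirement collapses to $A(R_n) = O(1/n)$, i.e., $n e^{-\xi R_n} = O(1)$. In (ii)(b), writing $R_n = a_n \log(n)$ makes $A(R_n) = n^{-a_n \log(M)}$, and balancing this against $\overline{\varepsilon}(R_n, n) \asymp R_n/n$ yields the constraint $n^{1 - a_n \log(M)}/\log(n) = O(1)$. In (ii)(c), (iii), and (iv), one has $\overline{\varepsilon}(r, n) \asymp A(r) M^r/n$, so $\overline{L}_n \asymp A(R_n) \bigl(1 + M^{R_n}/n\bigr)$; this is minimized in order by choosing $R_n$ so that $M^{R_n}$ is of the same order as $n$, i.e., $R_n = a_n \log(n)$ with $a_n \log(M) \approx 1$, giving $\overline{L}_n \asymp A(a_n \log(n))$. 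Substituting the three decay forms then yields $n^{-\xi/\log(M)}$, $\log^{-\xi}(n)$, and $[\log\log(n)]^{-\xi}$ respectively; the extra condition $\liminf \log(a_n)/\log\log(n) > -1$ in (iv) is exactly what ensures $R_n \to \infty$ rapidly enough for $\log(R_n)$ to match the scale $\log\log(n)$.

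The principal obstacle is part (i), since the generic upper-bound machinery used for (ii)--(iv) overshoots by an exponential factor and must be replaced by a delicate case-specific analysis. Within that analysis, establishing the lower bound $L_n \gtrsim (1 - M^{-r_0})^n$ is the hardest piece: one must quantitatively bound below the squared prediction error on $\mathcal{E}_{r_0}^c$, which requires tracking which lower-resolution fallback the imputation rule lands on and ensuring that this fallback does not, by accident, still reproduce the true $\mu_\infty$. For parts (ii)--(iv), the residual work is routine asymptotic balancing, but some care is needed to verify that each stated restriction on $a_n$ (e.g., the $a_n \asymp 1$ in (ii)(b) and (iii) versus the sharper $n^{a_n\log(M)-1} \asymp 1$ in (ii)(c)) is exactly what the order-minimization of $\overline{L}_n$ requires, rather than merely sufficient.
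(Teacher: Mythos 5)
Your plan for parts (ii)--(iv) is essentially the paper's own argument: substitute the three regimes of $\overline{\varepsilon}(r,n)$ from \eqref{eq:upper_bound_rate} into $\overline{L}_n$ and balance the two terms, with the paper additionally packaging each case as a two-step ``$\liminf$ of the ratio is positive for every sequence, then characterize exactly when $\asymp$ holds'' argument (so the stated restrictions on $a_n$ come out as necessary and sufficient, the point you flag at the end as needing care). Your reading of the logarithmic-decay condition $\liminf \log(a_n)/\log\log(n) > -1$ is also the paper's. For part (i), your event decomposition on $\mathcal{E}_{r_0}$ is a probabilistic re-derivation of machinery the paper has already built: the exact formula \eqref{eq:epsilon_r_n_binary} is itself obtained by conditioning on the fallback level of the imputation rule, and the telescoping identity $\sum_{k=0}^{r_0-1}\E_n[\I(n(\vec{\bm{1}}_{k})>0, n(\vec{\bm{1}}_{k+1})=0)] = \pr\{n(\vec{\bm{1}}_{r_0})=0\} = (1-M^{-r_0})^n$ gives both bounds at once, namely $A(r_0-1)(1-M^{-r_0})^n \le \varepsilon(r_n,n) \le 2A(0)(1-M^{-r_0})^n$ whenever $\liminf r_n \ge r_0$. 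The piece you single out as hardest --- the lower bound on $\mathcal{E}_{r_0}^c$, and the worry that a lower-resolution fallback might ``accidentally'' reproduce $\mu_\infty$ --- dissolves once you recall that the error is an expectation over the target's covariates: the mean squared bias of a level-$k$ fallback is exactly $\E[(\mu(\vec{\bm{X}}_{k})-\mu(\vec{\bm{X}}_{r}))^2] = A(k)-A(r) \ge A(r_0-1) = \delta_{r_0}^2 > 0$, which is precisely the ``nontrivial dependence'' constant you were reaching for. Two small additions would complete your plan: (a) handle sequences with $R_n < r_0$ infinitely often (the resolution bias is then bounded below by a constant, which dominates $(1-M^{-r_0})^n$), and (b) note that on $\mathcal{E}_{r_0}$ exactness of the prediction uses both $\tau^2=0$ and $A(r_0)=0$, so that every training sample matching on $\vec{\bm{x}}_{r_0}$ has response identically $\mu(\vec{\bm{x}}_{r_0})$ --- which you do state correctly.
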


Next we prove  that the optimal rates for the upper bounds of prediction errors are also precisely the optimal rates for the true prediction errors, except for the exponential decay case with $\xi\ge \log(M)$, where we can only conjecture but not prove that the results also hold.  
The following theorem summarizes our results, where for completeness, we include the hard thresholding case, even though Theorem \ref{thm:binary_zero_tau} is exact in that case. 
Specifically, we say $l_n$ is an asymptotic lower bound for the prediction error $A(r)+\varepsilon(r,n)$ and denote it as $A(r)+\varepsilon(r, n) \gtrsim l_n$, 
if $l_n = O(A(r_n) + \varepsilon(r_n, n))$,  for any sequence $\{r_n\}$.

\begin{theorem}\label{thm:lower_bound_binary_tau2_zero}
Under model \eqref{eq:binary_model} with $\tau^2=0$ and $L^2$ loss, 
an asymptotic lower bound for  
$\varepsilon(r,n)+A(r)$ has the following form under each condition on $A(r)$, where $\xi>0$. 
\begin{itemize}
    \item[(i)] \underline{Hard Thresholding:  $A(r)=0$ for $r\geq r_0$, and $A(r)>0$ for $r< r_0$.}  Then  $A(r) + \varepsilon(r, n) \gtrsim (1-M^{-r_0})^n$.
	\smallskip
	
	\item[(ii)] \underline{Exponential Decay: $A(r) \asymp e^{-\xi r}$.} 
	 Then  $A(r) + \varepsilon(r, n) \gtrsim n^{-\xi / \log(M)}$. 
	\smallskip
	
	\item[(iii)] \underline{Polynomial Decay:  $A(r) \asymp r^{-\xi}$.}  
	Then  $A(r) + \varepsilon(r, n) \gtrsim \log^{-\xi}(n)$. 
	
	\smallskip
	\item[(iv)] \underline{Logarithmic Decay:  $A(r) \asymp \log^{-\xi}(r)$.} 	
	Then  $A(r) + \varepsilon(r, n) \gtrsim [\log\log(n)]^{-\xi}$. 
		
\end{itemize}
\end{theorem}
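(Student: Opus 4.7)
The overall strategy is a dichotomy argument based on a critical resolution $r^*_n = \lceil \log(n)/\log(M) \rceil$. Since $\tau^2 = 0$ and $A$ is non-increasing, every term in the decomposition \eqref{eq:epsilon_r_n_binary} is non-negative, so the bias part alone yields
\[
\varepsilon(r, n) \ \geq \ \sum_{k=0}^{r-1} [A(k) - A(r)] \, \pr(\mathcal{K} = k) \ = \ \E\bigl[ (A(\mathcal{K}) - A(r)) \I(\mathcal{K} \leq r - 1) \bigr].
\]
At $r^*_n$, the expected count satisfies $\E[n(\vec{\bm{1}}_{r^*_n})] = n M^{-r^*_n} \asymp 1$, so $\pr(\mathcal{K} \leq r^*_n - 1) = (1 - M^{-r^*_n})^n$ is bounded below by a positive constant (tending to $e^{-1}$). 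On this event, $A(\mathcal{K}) \geq A(r^*_n - 1)$, and this quantity is already of the claimed magnitude: $A(r^*_n - 1) \asymp n^{-\xi/\log(M)}$ for case (ii), $\log^{-\xi}(n)$ for case (iii), and $[\log\log(n)]^{-\xi}$ for case (iv).

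For cases (ii)--(iv), I will choose a threshold $\tilde r_n \geq r^*_n$ (depending on the decay regime) satisfying $A(\tilde r_n) \leq \tfrac{1}{2} A(r^*_n - 1)$. Concretely, $\tilde r_n = r^*_n + O_\xi(1)$ for exponential decay, $\tilde r_n \asymp r^*_n$ (with a sufficiently large constant factor) for polynomial decay, and $\tilde r_n \asymp (r^*_n)^{2^{1/\xi}}$ for logarithmic decay. Given any sequence $\{r_n\}$, if $r_n \leq \tilde r_n$ then $A(r_n) \geq A(\tilde r_n) \asymp A(r^*_n - 1)$, matching the target rate; if instead $r_n > \tilde r_n \geq r^*_n$, the bias inequality with $k = r^*_n - 1 \leq r_n - 1$ gives $\varepsilon(r_n, n) \geq [A(r^*_n - 1) - A(r_n)] \pr(\mathcal{K} \leq r^*_n - 1) \geq \tfrac{1}{2} A(r^*_n - 1) \cdot c$, again matching the target. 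Case (i) is simpler, with $r_0$ playing the role of $r^*_n$: for $r_n < r_0$, $A(r_n) \geq A(r_0 - 1) > 0$ is a positive constant dominating $(1-M^{-r_0})^n$; for $r_n \geq r_0$, $A(r_n) = 0$ reduces the bias bound to $\E[A(\mathcal{K})\I(\mathcal{K} \leq r_0 - 1)] \geq A(r_0 - 1)(1 - M^{-r_0})^n$, yielding the claimed rate.

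The main obstacle is calibrating $\tilde r_n$ in the slow-decay regimes. A naive split at $r_n = r^*_n$ fails for polynomial and logarithmic $A$, because $A(r^*_n - 1) - A(r^*_n)$ is only of order $A(r^*_n)/r^*_n$ or weaker, losing a logarithmic factor relative to the target. Pushing $\tilde r_n$ out far enough to restore the rate is delicate because it must be done without weakening the probability factor $\pr(\mathcal{K} \leq r^*_n - 1)$; the key observation is that this probability is governed solely by $r^*_n$, so increasing $\tilde r_n$ costs nothing on the probability side, while increasing the gap $A(r^*_n - 1) - A(\tilde r_n)$ on the magnitude side. A secondary technical remark is that in case (ii), the derived lower bound $n^{-\xi/\log(M)}$ holds uniformly in $\xi > 0$ because the argument only exploits $A(r^*_n - 1) \asymp n^{-\xi/\log(M)}$; this is precisely why it matches the upper bound in Theorem~\ref{thm:binary_zero_tau}(ii) only when $\xi < \log(M)$, leaving the acknowledged open gap for $\xi \geq \log(M)$.
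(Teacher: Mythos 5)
Your proof is correct, and it takes a genuinely different route from the paper's. The paper argues by contradiction: if the target rate were not a lower bound, a subsequence would exist with both $A(r_{n_k})$ and $\varepsilon(r_{n_k},n_k)$ of smaller order, forcing $M^{r_{n_k}}/n_k\to\infty$; it then invokes Lemma~\ref{lemma:lowerbound_large_r}, which extracts (via Bolzano--Weierstrass) a modified resolution $\tilde r_n\le r_n$ with $M^{\tilde r_n}/n\to\tilde c$ and lower-bounds $\varepsilon(r_n,n)$ by the \emph{variance} contribution $A(\tilde r_n)\,\E_n[\I(n(\vec{\bm{1}}_{\tilde r_n})>0,\,n(\vec{\bm{1}}_{\tilde r_n+1})=0)/n(\vec{\bm{1}}_{\tilde r_n})]$, whose positivity rests on the binomial computation in Lemma~\ref{lemma:nk_inverse_k_kplus1}. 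You instead exploit the \emph{squared-bias} terms $\sum_{k<r}[A(k)-A(r)]\,\pr(\mathcal{K}=k)$ of \eqref{eq:epsilon_r_n_binary}, anchored at the deterministic critical resolution $r^*_n=\lceil\log(n)/\log(M)\rceil$, where the only probabilistic input is the elementary bound $\pr(\mathcal{K}\le r^*_n-1)=(1-M^{-r^*_n})^n\ge(1-1/n)^n$, bounded below by a positive constant. Your dichotomy at the calibrated threshold $\tilde r_n$ (either $A(r_n)\ge A(\tilde r_n)\asymp A(r^*_n-1)$, or the bias gap $A(r^*_n-1)-A(r_n)\ge\tfrac12 A(r^*_n-1)$ is captured with constant probability) then delivers the rate directly for every $n$, rather than along subsequences, and both the subsequence-extraction lemma and the binomial expectation lemma become unnecessary. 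The only loose ends are minor calibration details --- ensuring the implied constants in $A(r)\asymp e^{-\xi r}$, $r^{-\xi}$, $\log^{-\xi}(r)$ are absorbed when choosing $\tilde r_n$ so that $A(\tilde r_n)\le\tfrac12 A(r^*_n-1)$ while still $A(\tilde r_n)\gtrsim A(r^*_n-1)$ --- and these are routine. Your closing remark about why the bound matches the upper bound of Theorem~\ref{thm:binary_zero_tau} only for $\xi<\log(M)$ is also consistent with the gap the paper acknowledges.
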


Comparing Theorems \ref{thm:binary_zero_tau} and \ref{thm:lower_bound_binary_tau2_zero}, we see the upper and lower bounds on $L_n$ match except when $A(r)\asymp e^{-\xi r}$ and $\xi\ge \log(M)$.  Also comparing both theorems to Theorem \ref{th:disc} with $\tau^2>0$, 
it is not surprising that the prediction error can achieve the same rate as that in Theorem \ref{th:disc} with  polynomial or logarithmic rates. 
This is because the estimation error when $\tau^2=0$ converges to zero more quickly than when $\tau^2 > 0$, as shown in \eqref{eq:upper_bound_rate}. 
However, 
in Theorem  \ref{thm:binary_zero_tau}  with polynomial or logarithmic rates, we allow $R_n  =  \log(n)/\log(M)$, and thus the number of unknown  parameters $M^{R_n}$ can be the same order as the sample size $n$. 
When the resolution bias $A(r)$ decays exponentially, the prediction error is able to achieve faster rate than  that in Theorem \ref{th:disc}. 

More importantly, 
when the resolution bias $A(r)$ has  a hard threshold or decays exponentially more quickly than $M^{-r}$, 
then the prediction error can achieve the usual rate $n^{-1}$, and the resolution $R_n$ is allowed to even be infinity. 
In particular, with infinite resolution, for each individual of interest, 
we are essentially trying to find the training samples that are closest to this individual (in terms of exactly the same covariates up to a certain resolution),
and use the average response from these training samples as our prediction. 
This is similar to the discussion in Section \ref{sec:linear_zero_tau}, where the usual bias-variance trade-off now puts all its considerations on the bias term in the deterministic world. 

Finally, we remark on the construction of model \eqref{eq:binary_model} with specific resolution bias $A(r)$ and ultimate risk $\tau^2$. 
Let $\beta_0$ be any constant, and $\beta_k = M/\sqrt{M-1}\cdot\sqrt{A(k-1) - A(k)}$ for $k\ge 1$.  
Define 
$
Y = \sum_{k=0}^\infty \beta_k  [\I(X_k=1) - M^{-1}] + \eta, 
$
where 
$X_1, X_2, \ldots$ are iid uniform on $\{1, 2, \ldots, M\}$, 
$\eta$ has mean zero and variance $\tau^2$, 
and $\vec{\bm{X}}_{\infty}$ and $\eta$ are independent.
Then the corresponding model \eqref{eq:binary_model} has the desired resolution bias and ultimate risk.

\section{From the Past to Future}\label{sec:main_practical}

\subsection{A logical consequence of the large-$p$-small-$n$ framework}\label{sec:unintend}

We appreciate the value of permitting $p$ to vary with $n$ as a \textit{mathematical strategy} for approximations,  because it can capture the magnitude of $p$ in 
relation
to $n$ toward determining which approximation terms can or cannot be ignored. But the same cannot be said about the 
\textit{statistical understanding} of the behavior of the resulting model in real applications. As discussed in Section \ref{sec:theory_linear} and further argued below,
this is not merely a logical or philosophical issue, but an issue of revealing correctly the actual behavior of our prediction models in practice. 

Specifically, for most practical problems, the underlying generative models, however the way nature adopts or we conceptualize them, precede our data collection effort. We therefore can permit our data collection process to be influenced by the generative model, but not vice versa. Nature does not alter its behavior in anticipation of the sample size we may choose. Consequently, when we assume $p> n$ and permit 
$n\rightarrow \infty$, it forces the logical conclusion that  $p=\infty$, if $p$ indexes a feature of nature's generative model. 

One may argue that $p$ in the large-$p$-small-$n$ asymptotics should not be conceptualized as an index of nature's behavior, but only as a human's approximation, like our primary resolution $R_n$. 
However, in the large-$p$-small-$n$ framework,  it is often assumed that the amount of total variation in the outcome that can be explained by $p$ predictors is a constant when we increase $n$ and hence $p$ because $p$ grows with $n$
 \citep[e.g.,][]{belkin2019reconciling,hastie2019surprises}. 
But if $p$ is meant to represent the number of predictors we humans use for predicting an outcome, then this assumption of fixed explainability defeats the purpose of using more predictors to improve the explainability of the predictors. When our mathematical formulation prohibits improvements,
the resulting theoretical results may mislead us when they are used for building our intuitions, even though they may provide useful mathematical approximations for computational purposes.  

As an illustration, let  $\delta_i^2=\E[(\mu_{i}-\mu_{i-1})^2]$, which measures the incremental contribution of the information in $\F_{i}$ in excess to that in $\F_{i-1}$ for explaining  the variability in $Y$ (over the population as defined by $\F_0$). Taking $r=0$ in (\ref{eq:keyi}), we have  
\begin{equation}\label{eq:keyii}
\Var(Y|\F_0)\equiv \sigma^2_0=\E[\sigma^2_{\infty}]+\sum_{i=1}^{\infty}\delta^2_i.
\end{equation}
This implies that, as $i$ increases, $\delta_i$ must be vanishingly small when $\Var(Y|\F_0)< \infty$, a trivial condition for virtually all real-life problems. This implies that the value of $p$ in the current large-$p$-small-$n$ regime cannot possibly be a sensible index of model complexity to be used in linear fashion, because increasing, say, from $p=2$ to $p=4$ could be far more consequential than moving from $p=22$ to $24$.  Yet it has been a common practice in the current literature of machine learning or statistics to plot prediction errors against $p$. 
It is therefore refreshing to see some recent work for studying and plotting the error against more meaningful indexes, such as a spectral decay in \cite{liang2019just}.  

More broadly, the predictability of any set of covariates depends on at least (I) how any of them influence the outcome in the absence of other predictors and (II) how they are related to each other. Neither of the two can be adequately captured in general by merely their size. In this article we therefore adopt
the direct measure of the decay rate in prediction error as we increase the resolution level
(e.g., employing more predictors). 
As demonstrated in Theorems \ref{th:cont}--\ref{thm:lower_bound_binary_tau2_zero},
this resolution decay rate plays a critical role in determining the optimal resolution, as well as in revealing further some problematic aspects of the current large-$p$-small-$n$ framework.

\subsection{Applications to personalized treatment}
This work was initiated by the need for establishing a statistically principled and scientifically sound theory of personalized treatments \citep{meng2014trio}. Therefore, we provide a very brief review of two types of methods in the literature.
The first type focuses on modeling the potential outcome of each patient given his or her covariates under each treatment arm, 
and it uses the resulting predictions to identify optimal treatment regimes; 
see \citet{murphy2003optimal, robins2004optimal,  zhao2009reinforcement} and \citet{Metalearners2019}. 
The second type focuses on a posited class of treatment regimes and tries to find the one that maximizes the overall outcome for all units; 
see \citet{zhao2012estimating, laber2015tree} and \citet{kosorok2019precision}. 

Our results provide useful theoretical guidance and insight to both types of applications, because they are applicable to different populations of interest or target individuals, as captured by $\F_0$ and $\F_\infty$ respectively. For either approach, the key feature of our framework is the complete avoidance of imposing a relationship between $p$ and $n$,  and hence it is suitable for investigating an arbitrarily large number of covariates.
Indeed, as we have seen in Sections~\ref{sec:theory_linear}-\ref{sec:theory_tree}, the MR framework can handle predictions with potentially infinitely many covariates.
  
\subsection{The method of sieves for infinite-dimension estimation}\label{sec:sieve}
The method of sieves \citep{grenander1981} deals with infinite-dimension estimation problems, by restricting the parameter estimation to a subset of the parameter space whose dimension grows with the sample size at some judiciously chosen rates \citep[e.g.,][]{Geman1982, shen1994,shen1997,johnstone2011gaussian}. 
The sequence of the subsets is then called a sieve, which can be viewed as a counterpart to MR's information filtration indexed by the resolution level $r$. 

Whereas wavelets and sieve methods share similar mathematical constructs,  our focus differs from the classical literature on sieves in several ways. 
First, we focus on prediction instead of parameter estimation. 
Second, for non-/semi-parametric estimation, the sieves for certain functional classes are well-understood. 
Under the MR framework, the resolution bias due to a sieve is generally more complicated, and the order of the covariates or equivalently the choice of sieve plays an important role in prediction error, as shown in Theorem \ref{th:ordering}. 
Third, 
we try
to understand both sufficient and necessary conditions for asymptotically optimal prediction (as in Theorems \ref{th:cont}--\ref{thm:lower_bound_binary_tau2_zero}),
where the literature on sieves typically focus on upper bounds for the estimation convergence rate.

\subsection{Much more work is needed}

A most needed theoretical insight is on deciding a reasonable ordering in practice, going beyond the results in Section~\ref{sec:ordering}. We do not expect any kind of ``automated choice" results, in theory or in practice, because of the no-free lunch principle. Since it is impossible to have a direct learning population, judgements and assumptions are inevitable. However, it is possible to obtain relatively general results for some specified (and practically meaningful) problems.
{\newchange Moreover, one may  borrow ideas from regularization methods in the large-$p$-small-$n$ framework, which can explore  all possible choices of the subsets of the covariates (e.g., $2^p$ in Lasso) without any pre-ordering. How to do so effectively within the MR framework is a challenging problem given $p$ potentially is $\infty$, although the observed number of covariates is always finite in practice.}

As mentioned earlier, we were intrigued by the world without variance. 
We wonder, without ever being able to determine which world we are in, how could we be allowed to see its consequences? The answer seems to lie in the fact that $\sigma^2_\infty=0$ is a necessary but not sufficient condition for the no bias-variance trade-off phenomenon. 
As seen in the bottom row of Figure~\ref{fig:estimate_pred_loss}, 
this phenomenon
did not occur when $A(r)$ decays too slowly, e.g., polynomially or logarithmically. 
{\newchange Note that we can always artificially create infinitely many covariates by certain series expansions of the basic covariates.  
The observation in the world without variance should motivate us to investigate the performance of non-parametric sieve regression when the response is indeed a deterministic function of the covariates.}
This observation also suggests the possibility for a black-box procedure to resist (empirically verifiable) over-fitting, when the number of patterns detectable with sufficient frequencies is far fewer than theoretically possible. In such cases, \textit{exhaustive learning} is practically possible with sufficiently large training samples, hence there is no need 
for 
``intrinsic variance" to capture model imperfection, avoiding the creation of a petri dish for over-fitting. This possibility suggests a systematic investigation of the deterministic MR framework for complex machine learning models to see if it indeed provides an alternative explanation of the over-fitting resistant nature of these models.

\section*{Acknowledgments}

The authors thank colleagues, 
especially James Bailie, Robin Gong, Tengyuan Liang, and Kai Zhang, 
as well as several meticulous reviewers for encouragements and comments, which have greatly improved both the content and presentation.  
They also acknowledge partial financial support from NSF grants.

\bibliographystyle{plainnat}
\bibliography{multiresolution}


\newpage
\setcounter{equation}{0}
\setcounter{section}{0}
\setcounter{figure}{0}
\setcounter{example}{0}
\setcounter{proposition}{0}
\setcounter{corollary}{0}
\setcounter{theorem}{0}
\setcounter{table}{0}
\setcounter{condition}{0}

\renewcommand {\theproposition} {A\arabic{proposition}}
\renewcommand {\theexample} {A\arabic{example}}
\renewcommand {\thefigure} {A\arabic{figure}}
\renewcommand {\thetable} {A\arabic{table}}
\renewcommand {\theequation} {A\arabic{section}.\arabic{equation}}
\renewcommand {\thelemma} {A\arabic{lemma}}
\renewcommand {\thesection} {A\arabic{section}}
\renewcommand {\thetheorem} {A\arabic{theorem}}
\renewcommand {\thecorollary} {A\arabic{corollary}}
\renewcommand {\thecondition} {A\arabic{condition}}
\renewcommand {\thepage} {A\arabic{page}}

\setcounter{page}{1}

\begin{center}
\bf \LARGE
Appendices
\end{center}


\section{Proof of Theorem~\ref{th:ordering}}

Recall that $\vec{\bm{X}}_{\infty}$ and $\vec{\bm{X}}'_{\infty}$ are two orderings of the covariates with resolution biases $A(\cdot)$ and $A'(\cdot)$, estimation errors $\varepsilon(\cdot, n)$ and $\varepsilon'(\cdot, n)$, and rate-optimal resolutions $R_n$ and $R_n'$. 
Then the prediction error, excluding the ultimate risk, using resolution $R_n$ under the new ordering satisfies
\begin{align*}
    A'(R_n) + \varepsilon'(R_n, n) 
    & = A'(R_n) + \{A'(R_n) + \tau^2\} \frac{\varepsilon'(R_n, n)}{A'(R_n) + \tau^2}
    \\
    & \asymp
    A'(R_n) + \{A'(R_n) + \tau^2\} \frac{\varepsilon(R_n, n)}{A(R_n) + \tau^2}
    \\
    & = \frac{A'(R_n)}{A(R_n)} A(R_n)  + \frac{A'(R_n) + \tau^2}{A(R_n) + \tau^2} \varepsilon(R_n, n). 
\end{align*}
Thus, a sufficient condition for the new order to achieve the optimal rate under the original ordering is that $A’(R_n) = O(A(R_n))$. 

By the property of prediction function families and the definition of $M_r(A, A')$, 
families of prediction functions at resolution $\infty$ are the same under both orderings, 
and any prediction function at resolution $r-M_r(A, A')$ under ordering $\vec{\bm{X}}_{\infty}$ belongs to the family of prediction functions at resolution $r$ under ordering $\vec{\bm{X}}'_{\infty}$. 
These imply that
\begin{align*}
A'(r) & = 
\min_{\bm{\theta}_r}\E[\loss_{\odot}(Y, g(\vec{\bm{X}}'_{r};\bm{\theta}_{r}))
-
\min_{\bm{\theta}_\infty}\E[\loss_{\odot}(Y, g(\vec{\bm{X}}'_{\infty};\bm{\theta}_{\infty}))
\\
& \le 
\min_{\bm{\theta}_{r-M_r(A, A')}}\E[\loss_{\odot}(Y, g(\vec{\bm{X}}_{r-M_r(A, A')};\bm{\theta}_{r-M_r(A, A')}))
-
\min_{\bm{\theta}_\infty}\E[\loss_{\odot}(Y, g(\vec{\bm{X}}_{\infty};\bm{\theta}_{\infty}))\\
& = 
A(r-M_r(A, A')). 
\end{align*}
Thus, 
a sufficient condition for $A'(R_n) = O(A(R_n))$ is that 
$A(r_n-M_{r_n}(A, A')) = O(A(r_n))$ for any sequence $r_n \rightarrow \infty$. Below we consider three decay rates for $A(\cdot)$. 
\begin{itemize}
	\item[(i)] 
	For any sequence $r_n\rightarrow \infty$, because $A(r) \asymp e^{-\xi r}$ and $\limsup_{n\rightarrow\infty}M_{r_n}(A, A') \le \limsup_{r \rightarrow \infty} M_r(A, A')\le C$,
	\begin{align*}
	\frac{A(r_n-M_{r_n}(A, A'))}{A(r_n)}
	& = 
	O\left( 
	\frac{e^{-\xi(r_n-M_{r_n}(A, A'))}}{e^{-\xi r_n}}
	\right)
	= 
	O\left( e^{\xi M_{r_n}(A, A')}
	\right)=O(1).
	\end{align*}
	
	\item[(ii)] 
	For any sequence $r_n\rightarrow \infty$, because $A(r)\asymp r^{-\xi}$ and $\limsup_{n \rightarrow \infty} M_{r_n}(A, A')/r_n \le \limsup_{r \rightarrow \infty} M_r(A, A')/r < 1$, 
	\begin{align*}
	\frac{A(r_n-M_{r_n}(A, A'))}{A(r_n)}
	& = 
	O\left( 
	\frac{(r_n-M_{r_n}(A, A'))^{-\xi}}{
		r_n^{-\xi}
	}
	\right)
	= 
	O\left(
	\left(
	1 - \frac{M_{r_n}(A, A')}{r_n}
	\right)^{-\xi}
	\right)=O(1).
	\end{align*}
	
	\item[(iii)] 
	For any sequence $r_n\rightarrow \infty$, because $A(r)\asymp \log^{-\xi}(r)$ and $a_{r_n} = O(1)$, 
	\begin{align*}
	\frac{A(r_n-M_{r_n}(A, A'))}{A(r_n)}
	& = 
	O\left( 
	\frac{
		\log^{-\xi} (r_n-M_{r_n}(A, A'))
	}{
		\log^{-\xi}(r_n)
	}
	\right)
	= 
	O\left( 
	\left\{
	\frac{
		\log (r_n-M_{r_n}(A, A'))
	}{
		\log(r_n)
	}
	\right\}^{-\xi}
	\right)\\
	& = 
	O\left( 
	\left\{
	\frac{
		\log (r_n^{1/a_{r_n}})
	}{
		\log(r_n)
	}
	\right\}^{-\xi}
	\right)
	 =
	O\left(
	a_{r_n}^{\xi}
	\right) = O(1).
	\end{align*}
\end{itemize}

\section{Technical Details for the Linear Models in Section \ref{sec:theory_linear}}\label{sec:proof_linear}

\subsection{Expression for estimation error $\varepsilon(r,n)$}\label{sec:proof_est_error_linear}

To facilitate the discussion, we introduce the Gram--Schmidt orthogonalization of the original covariates $\vec{\bm{X}}_{\infty}$, and denote the orthogonalized covariates by $\vec{\bm{W}}_{\infty}$. 
Specially, $W_0 = 1$ is a constant, 
$W_1, W_2, \ldots$ are i.i.d.\  standard normal random variables, 
and for any $r\ge 1$, 
$\vec{\bm{W}}_{r}$ and $\vec{\bm{X}}_{r}$ are linear transformations of each other. 
The joint distribution of $(Y, \vec{\bm{W}}_{\infty})$ also follows a linear model: 
$Y = \delta_0 + \sum_{k=1}^R \delta_kW_{k} + \eta,$
where $\delta_k^2$ is equivalently the variance of $Y$ explained by $X_k$ in addition to that by the previous covariates, 
$\eta$ has conditional mean 0 and conditional variance $\tau^2 >0$ given $\vec{\bm{W}}_{\infty}$. 
All of these quantities $\delta_k^2$, $\eta$ and $\tau^2$ are the same as defined in Section \ref{sec:linear_model}. 

Let $\{(y_{i}, \vec{\bm{w}}_{i\infty}): 1\le i \le n\}$ be the training set with orthogonalized covariates, 
and $\hat{\bm{\delta}}_r$ be the corresponding least squares coefficient at resolution $r$. 
Define $\bm{B}_r$ as the linear transformation from $\vec{\bm{X}}_{r}$ to $\vec{\bm{W}}_{r}$, i.e., $\vec{\bm{W}}_{r} = \bm{B}_r \vec{\bm{X}}_{r}$. 
Clearly, $ \bm{\theta}^*_r = \bm{B}_r^\top \bm{\delta}_r$ and $\hat{\bm{\theta}}_r  = \bm{B}_r^\top \hat{\bm{\delta}}_r$. 
From \eqref{eq:quad}, the estimation error $\varepsilon(r, \trainingset_n)$ has the following equivalent forms:
\begin{align*}
    \varepsilon(r, \trainingset_n)
    & = 
    \E \big\{ \big( \vec{\bm{X}}_{r}^\top \hat{\bm{\theta}}_r - \vec{\bm{X}}_{r}^\top \bm{\theta}^*_r \big)^2 \big\}
    = 
    (\hat{\bm{\theta}}_r - \bm{\theta}^*_r)^\top \E \big( \vec{\bm{X}}_{r} \vec{\bm{X}}_{r}^\top \big) (\hat{\bm{\theta}}_r - \bm{\theta}^*_r). 
\end{align*}
By the construction of $\vec{\bm{W}}_{\infty}$, 
the estimation error further simplifies to 
\begin{align*}
    \varepsilon(r, \trainingset_n)
    & = 
    (\hat{\bm{\theta}}_r - \bm{\theta}^*_r)^\top \E \big( \vec{\bm{X}}_{r} \vec{\bm{X}}_{r}^\top \big) (\hat{\bm{\theta}}_r - \bm{\theta}^*_r)
    = 
    (\hat{\bm{\delta}}_r - \bm{\delta}_r)^\top \bm{B}_r \E \big( \vec{\bm{X}}_{r} \vec{\bm{X}}_{r}^\top \big)
    \bm{B}_r^\top 
    (\hat{\bm{\delta}}_r - \bm{\delta}_r)
    \\
    & = 
    (\hat{\bm{\delta}}_r - \bm{\delta}_r)^\top \E \big( \vec{\bm{W}}_{r} \vec{\bm{W}}_{r}^\top \big)
    (\hat{\bm{\delta}}_r - \bm{\delta}_r)
    = (\hat{\bm{\delta}}_r - \bm{\delta}_r)^\top (\hat{\bm{\delta}}_r - \bm{\delta}_r). 
\end{align*}%
Let $\tilde{\bm{Y}} = (y_1, \ldots, y_n)^\top$ denote all responses in the training set,  $\tilde{\bm{W}}_{r}$ be the $n\times (r+1)$ matrix consisting of $\vec{\bm{w}}_{ir}$'s, and $\tilde{\bm{W}}_{1:r}$ be the $n\times r$ submatrix consisting of the last $r$ columns of $\tilde{\bm{W}}_{r}$, i.e., 
$
\tilde{\bm{W}}_{r} = (\bm{1}_n, \tilde{\bm{W}}_{1:r}). 
$

We first simplify $\varepsilon(r, n)$. 
By definition, 
$
\hat{\bm{\delta}}_r = \left(
\tilde{\bm{W}}_{r}^\top \tilde{\bm{W}}_{r}
\right)^{-1}
\tilde{\bm{W}}_{r}^\top \tilde{\bm{Y}},
$
and thus 
$
\hat{\bm{\delta}}_r - \bm{\delta}_r = 
\left(
\tilde{\bm{W}}_{r}^\top \tilde{\bm{W}}_{r}
\right)^{-1}
\tilde{\bm{W}}_{r}^\top
\left( \tilde{\bm{Y}} - \tilde{\bm{W}}_{r} \bm{\delta}_r\right).
$ Conditional on $\tilde{\bm{W}}_{r}$, 
$\tilde{\bm{Y}} - \tilde{\bm{W}}_{r} \bm{\delta}_r$ 
follows a multivariate normal distribution with mean zero and covariance matrix 
$\{A(r)+\tau^2\} \bm{I}_{n}$. This implies 
\begin{align*}
    & \quad \ \E_n \left[\varepsilon(r, \trainingset_n) \mid \tilde{\bm{W}}_{r} \right]=
\text{tr}
\left\{
\E_n
\left[
\left( \hat{\bm{\delta}}_r - \bm{\delta}_r \right)
\left( \hat{\bm{\delta}}_r - \bm{\delta}_r \right)^\top
\big| \tilde{\bm{W}}_{r}
\right]
\right\}\\
& = 
\text{tr}
\left\{
\left(
\tilde{\bm{W}}_{r}^\top \tilde{\bm{W}}_{r}
\right)^{-1}
\tilde{\bm{W}}_{r}^\top
\cdot
\E_n
\left[
\left( \tilde{\bm{Y}} - \tilde{\bm{W}}_{r} \bm{\delta}_r\right)
\left( \tilde{\bm{Y}} - \tilde{\bm{W}}_{r} \bm{\delta}_r\right)^\top
\big| \tilde{\bm{W}}_{r}
\right]
\cdot
\tilde{\bm{W}}_{r}
\left(
\tilde{\bm{W}}_{r}^\top \tilde{\bm{W}}_{r}
\right)^{-1}
\right\}
\\
& = 
\text{tr}
\left\{
\left(
\tilde{\bm{W}}_{r}^\top \tilde{\bm{W}}_{r}
\right)^{-1}
\tilde{\bm{W}}_{r}^\top
\cdot
\left[A(r) + \tau^2\right] \bm{I}_{n}
\cdot
\tilde{\bm{W}}_{r}
\left(
\tilde{\bm{W}}_{r}^\top \tilde{\bm{W}}_{r}
\right)^{-1}
\right\}
\\
& 
= \left[A(r)+ \tau^2\right]
\text{tr}\left\{
\left(\tilde{\bm{W}}_{r}^\top \tilde{\bm{W}}_{r}\right)^{-1}
\right\}. 
\end{align*}
Consequently, the estimation error $\varepsilon(r, n)$ reduces to 
\begin{align*}
\varepsilon(r, n) & = \E_n [\varepsilon(r, \trainingset_n)] = 
\E_n \left\{
\E_n \left[ \varepsilon(r, \trainingset_n) \mid \tilde{\bm{W}}_{r} \right]
\right\} = 
[A(r)+ \tau^2]
\cdot 
\text{tr}\left\{
\E_n \left[
\left(\tilde{\bm{W}}_{r}^\top \tilde{\bm{W}}_{r}\right)^{-1}
\right]\right\}. 
\end{align*}

We now give a closed form expression for the estimation error $\varepsilon(r, n)$. 
The sample mean and sample covariance matrix of the $\vec{\bm{w}}_{i,1:r}$'s are, respectively,
\begin{align*}
\bar{\bm{W}}_{1:r} = \frac{1}{n}\sum_{i=1}^n \vec{\bm{w}}_{i,1:r}, \ \ 
\bm{S}^2_{1:r} = \frac{1}{n-1}\sum_{i=1}^n 
\left(
\vec{\bm{w}}_{i,1:r} - \bar{\bm{W}}_{1:r}
\right)
\left(
\vec{\bm{w}}_{i,1:r} - \bar{\bm{W}}_{1:r}
\right)^\top. 
\end{align*}
Using the block matrix inversion formula,  
we obtain
\begin{align*}
(\tilde{\bm{W}}_{r}^\top \tilde{\bm{W}}_{r})^{-1} & = 
\begin{pmatrix}
n & n \bar{\bm{W}}_{1:r}^\top\\
n \bar{\bm{W}}_{1:r} & (n-1)\bm{S}^2_{1:r}+n\bar{\bm{W}}_{1:r}\bar{\bm{W}}_{1:r}^\top
\end{pmatrix}^{-1}\\
& =  
\begin{pmatrix}
n^{-1} + \bar{\bm{W}}_{1:r}^\top 
\left\{
(n-1)\bm{S}^2_{1:r}
\right\}^{-1}
\bar{\bm{W}}_{1:r} & 
- \bar{\bm{W}}_{1:r}^\top \left\{
(n-1)\bm{S}^2_{1:r}
\right\}^{-1}\\
-  \left\{
(n-1)\bm{S}^2_{1:r}
\right\}^{-1}
\bar{\bm{W}}_{1:r} & 
\left\{
(n-1)\bm{S}^2_{1:r}
\right\}^{-1}
\end{pmatrix}.
\end{align*}
Standard properties of the multivariate normal imply that 
(i) $\bar{\bm{W}}_{1:r} \ind \bm{S}^2_{1:r}$, 
(ii) $\left\{
(n-1)\bm{S}^2_{1:r}
\right\}^{-1}$
follows the standard inverse Wishart distribution with degrees of freedom $n-1$, 
and 
(iii) 
$n(n-1)\bar{\bm{W}}_{1:r}^\top 
\left\{
(n-1)\bm{S}^2_{1:r}
\right\}^{-1}
\bar{\bm{W}}_{1:r}$
follows 
Hotelling's $T$-squared distribution with dimensionality parameter $r$ and  degrees of freedom $n-1$. 
Consequently,
\begin{align*}
\E_n\left[\left(\tilde{\bm{W}}_{r}^\top \tilde{\bm{W}}_{r}\right)^{-1}
\right] =  \frac{1}{n-r-2}
\begin{pmatrix}
\frac{n-2}{n} & 0\\
0 & \bm{I}_r
\end{pmatrix},
\end{align*}
which yields \eqref{eq:epsilon_r_n_linear}. 

From \eqref{eq:epsilon_r_n_linear}, we 
see immediately that $\varepsilon(r,n) \ge \tau^2\cdot r/n$. Hence, when $\tau>0$,
a necessary condition for the estimation error to be $o(1)$ 
is that $r/n = o(1)$. Furthermore, 
when $r/n = o(1)$ and $\tau>0$, we have 
\begin{align*}
\varepsilon(r,n) & =
\left[A(r)+\tau^2 \right]
\cdot 
\frac{ 1/n - 2/n^2 +r/n}{1-r/n-2/n}
\asymp \frac{r}{n}. 
\end{align*}
Clearly, this result does not hold in general when $\tau=0.$

\subsection{Asymptotic rate of $\varepsilon(r, \trainingset_n)$ when $\tau^2 > 0$}

First, we give lower and upper bounds for the estimation error $\varepsilon(r, \trainingset_n)$. 
Let $\tilde{\bm{W}}_{r} = \bm{U}\bm{D}\bm{V}^\top$ be the singular value decomposition of $\tilde{\bm{W}}_{r}$, where $\bm{U}\in \mathbb{R}^{n\times(r+1)},$ $\bm{V}\in \mathbb{R}^{(r+1)\times(r+1)}$, and $\bm{D}$ is a $(r+1)\times (r+1)$ matrix with decreasing diagonal elements $d_1\geq d_2\geq \ldots \geq d_{r+1}.$ 
The difference between $\hat{\bm{\delta}}_r$ and $\bm{\delta}_r$ has the following equivalent forms: 
\begin{align*}
\hat{\bm{\delta}}_r - \bm{\delta}_r & = 
\left(
\tilde{\bm{W}}_{r}^\top \tilde{\bm{W}}_{r}
\right)^{-1}
\tilde{\bm{W}}_{r}^\top
\left( \tilde{\bm{Y}} - \tilde{\bm{W}}_{r} \bm{\delta}_r\right) = 
\left(
\bm{V} \bm{D} \bm{U}^\top \bm{U}\bm{D}\bm{V}^\top
\right)^{-1}
\bm{V} \bm{D} \bm{U}^\top
\left( \tilde{\bm{Y}} - \tilde{\bm{W}}_{r} \bm{\delta}_r\right)\\
& = (\bm{V}^\top)^{-1} \bm{D}^{-1} \bm{U}^\top 	\left( \tilde{\bm{Y}} - \tilde{\bm{W}}_{r} \bm{\delta}_r\right),
\end{align*}
and thus the estimation error reduces to 
\begin{align*}
\varepsilon(r, \trainingset_n) & = 
\left\| \hat{\bm{\delta}}_r - \bm{\delta}_r \right\|_2^2
=
\left\| (\bm{V}^\top)^{-1} \bm{D}^{-1} \bm{U}^\top 	\left( \tilde{\bm{Y}} - \tilde{\bm{W}}_{r} \bm{\delta}_r\right) \right\|_2^2
= \left\| \bm{D}^{-1} \bm{U}^\top 	\left( \tilde{\bm{Y}} - \tilde{\bm{W}}_{r} \bm{\delta}_r\right) \right\|_2^2. 
\end{align*}
Define $\bm{\zeta} = \{A(r)+\tau^2\}^{-1/2} \bm{U}^\top( \tilde{\bm{Y}} - \tilde{\bm{W}}_{r} \bm{\beta}_r)$. 
We can verify that $\bm{\zeta} \sim \mathcal{N}(\bm{0}, \bm{I}_{r+1})$, and the estimation error reduces to 
\begin{align}\label{eq:bound_estimation_linear}
\varepsilon(r, \trainingset_n) & = 
\left\| \left[A(r)+\tau^2 \right]^{1/2} \bm{D}^{-1} \bm{\zeta} \right\|_2^2
=
\left[A(r)+\tau^2 \right] \sum_{i=1}^{r+1} \frac{\zeta_i^2}{d_i^2}
\ \ 
\begin{cases}
\ge 	\tau^2 \|\bm{\zeta}\|_2^2 \cdot d_1^{-2}\\
\le  \left[A(0) + \tau^2 \right]\|\bm{\zeta}\|_2^2 \cdot d_{r+1}^{-2}
\end{cases},
\end{align}
where $d_1$ and $d_{r+1}$ are the largest and smallest singular values of $\tilde{\bm{W}}_{r}$, respectively.

Second, we study probability bounds for the singular values $d_1$ and $d_{r+1}$. 
Note that 
$
\bm{U}\bm{D}^2\bm{U}^T = \tilde{\bm{W}}_{r}\tilde{\bm{W}}_{r}^\top = 
(\bm{1}_n, \tilde{\bm{W}}_{1:r})
(\bm{1}_n, \tilde{\bm{W}}_{1:r})^\top
= \bm{1}_n\bm{1}_n^\top + \tilde{\bm{W}}_{1:r}\tilde{\bm{W}}_{1:r}^\top.
$
Let $s_{\min}$ and $s_{\max}$  be the smallest and largest singular values of $\tilde{\bm{W}}_{1:r}$. 
Because the smallest and largest 
eigenvalues 
of $ \bm{1}_n\bm{1}_n^\top$ are, respectively, $0$ and $n$, 
from Weyl's inequality, we have 
$
s_{\min}^2 + 0 \le d_{r+1}^2 \le d_1^2\leq s_{\max}^2 + n.
$
From \citet[][Corollary 5.35]{vershynin2010introduction}, for every $t\geq 0$, with probability at least $1-2\exp(-t^2/2)$ one has 
$\sqrt{n}-\sqrt{r} - t\le s_{\min} \le s_{\max} \leq \sqrt{n} + \sqrt{r} +t.$ 
Letting $t = \sqrt{n}$, we have 
\begin{align}\label{eq:bound_d_1}
    \frac{d_1^2}{r} \le
\frac{s^2_{\max}}{r} + \frac{n}{r}
=
O_{\pr}\left\{ \frac{(2\sqrt{n}+\sqrt{r})^2}{r} + \frac{n}{r} \right\}
= 
O_{\pr}\left\{  
\frac{(2+\sqrt{r/n})^2+1}{r/n}
\right\}.
\end{align}
Letting $t = \sqrt{n}/2$, if $r<n/4$, we have 
\begin{align}\label{eq:bound_d_r_plus_1}
\frac{r}{d_{r+1}^{2}} \le \frac{r}{s_{\min}^{2}} = 
O_{\pr}\left\{
\frac{r}{
	(\sqrt{n}/2 - \sqrt{r})^2
}
\right\}
= 
O_{\pr}\left\{
\frac{r/n}{
	(1/2 - \sqrt{r/n})^2
}
\right\}. 
\end{align}

Third, we study the asymptotic rate of $\varepsilon(r, \trainingset_n)$. 
From \eqref{eq:bound_estimation_linear} and \eqref{eq:bound_d_1}, 
\begin{align}\label{eq:lower_bound_estimation_linear}
\varepsilon(r, \trainingset_n)^{-1} = 
O_{\pr}\left( \tau^{-2} \frac{r}{\|\bm{\zeta}\|_2^2} \cdot \frac{d_1^{2}}{r} \right)
= 
O_{\pr} \left(
\frac{d_1^{2}}{r}
\right)
= O_{\pr}\left\{  
\frac{(2+\sqrt{r/n})^2+1}{r/n}
\right\}.
\end{align}
Thus, 
for any sequence $\{r_n\}$, 
a necessary condition for $\varepsilon(r_n,\trainingset_n)=o_{\pr}(1)$ is $r_n = o(n).$
If $r = o(n)$, from  \eqref{eq:bound_estimation_linear} and \eqref{eq:bound_d_r_plus_1}, 
\begin{align}\label{eq:upper_bound_estimation_linear}
\varepsilon(r, \trainingset_n) & = O_{\pr}\left(
\left[A(0) + \tau^2 \right]\frac{\|\bm{\zeta}\|_2^2}{r} \cdot \frac{r}{d_{r+1}^{2}}
\right)
= 
O_{\pr}\left(
\frac{r}{d_{r+1}^{2}}
\right)
= 
O_{\pr}\left\{
\frac{r/n}{
	(1/2 - \sqrt{r/n})^2
}
\right\}. 
\end{align}
When $r = o(n)$, 
\eqref{eq:lower_bound_estimation_linear} implies that 
$
\varepsilon(r, \trainingset_n)^{-1} = O_{\pr}\{(r/n)^{-1}\}, 
$
and 
\eqref{eq:upper_bound_estimation_linear} implies that 
$
\varepsilon(r, \trainingset_n) = O_{\pr}(r/n).  
$
Therefore, we have 
$
\varepsilon(r, \trainingset_n) \overset{p}{\asymp} r/n
$
when $r= o(n).$

\section{Proof of Theorem~\ref{th:cont}}
All cases are proved in two steps. First we show by contradiction that the error rate $A(r_n)+r^{\alpha}/n$
cannot be 
faster
than a specified rate $h(n)$ for any resolution sequence $r_n$. We then show by construction that there exists a sequence $R_n$ that reaches the rate $h(n)$. 
\begin{itemize}
	\item[(i)] (Hard Thresholding). 
	First, for any sequence $\{r_n\}$, 
	\begin{align*}
	\liminf_{n\rightarrow \infty}
	\frac{A(r_n) + r_n^{\alpha}/n}{1/n}>0.  
	\end{align*}
	If this inequality is false, then 
	there must exist a subsequence $\{r_{n_k}\}$ such that 
	\begin{align*}
	\lim_{k\rightarrow \infty}
	\frac{A(r_{n_k})}{1/{n_k}}=0 \ 
	\text{ and }  \ 
	\lim_{k\rightarrow \infty}
	\frac{ r_{n_k}^{\alpha}/{n_k}}{1/{n_k}}= \lim_{k\rightarrow \infty}
	r_{n_k}^{\alpha} =0,
	\end{align*}
	implying that 
	(i) $\liminf_{k\rightarrow \infty}r_{n_k}\geq r_0>0$ and (ii) $\lim_{k\rightarrow \infty}r_{n_k}=0$, a contradiction.

	Second, it is easy to verify that $A(r_n) + \varepsilon(r_n,n) \asymp 1/n$ if and only if 
	$r_n = O(1)$ and $\liminf_{n\rightarrow \infty} r_n \geq r_0$, which is the $R_n$ for case (i).

	\item[(ii)] (Exponential Decay).  
	First,  for any sequence $\{r_n=a_n \log(n)\}$, 
	\begin{align}\label{eq:proof_poly_opt_expo_goal}
	\liminf_{n\rightarrow \infty}
	\frac{e^{-\xi r_n} + r_n^{\alpha}/n}{\log^{\alpha}(n)/n}
	=
	\liminf_{n\rightarrow \infty} 
	\left\{
	\frac{n^{1-\xi a_n}}{\log^{\alpha}(n)} + a_n^{\alpha}
	\right\}
	>0.  
	\end{align} 
	If this inequality is false, then there must exist a subsequence $\{a_{n_k}\}$ such that 
	\begin{align*}
	\lim_{k\rightarrow \infty} 
	\frac{n_k^{1-\xi a_{n_k}}}{\log^{\alpha}(n_k)} = 0 \ 
	\text{ and } \ 
	\lim_{k\rightarrow \infty}  a_{n_k}^{\alpha}=0,
	\end{align*} 
	implying that (i) $\liminf_{k\rightarrow\infty} a_{n_k} \geq \xi^{-1}>0$ and (ii) $\lim_{k\rightarrow \infty}  a_{n_k}=0$, a contradiction.

	Second, by the equality in \eqref{eq:proof_poly_opt_expo_goal}, $A(r_n) + \varepsilon(r_n,n) \asymp \log^{\alpha}(n)/n$ if and only if $\{r_n = a_n\log(n)\}$ satisfies 
	\begin{align}\label{eq:cont_exp_proof}
	a_n = O(1), \quad \frac{n^{1-\xi a_n}}{\log^\alpha(n)} = O(1).
	\end{align}
	Note that the second condition in \eqref{eq:cont_exp_proof} implies that $a_n^{-1} = O(1)$. Thus, \eqref{eq:cont_exp_proof} is also equivalent to 
	\begin{align*}
		a_n \asymp 1, \quad \frac{n^{1-\xi a_n}}{\log^\alpha(n)} = O(1).
	\end{align*}

	\item[(iii)] (Polynomial Decay). 
	First, for any sequence $\{r_n = a_n n^{1/(\alpha+\xi)} \}$, 
	\begin{align}\label{eq:proof_poly_opt_poly}
	\liminf_{n\rightarrow \infty}
	\frac{r_n^{-\xi} + r_n^{\alpha}/n}{n^{-\xi/(\alpha+\xi)}}
	=
	\liminf_{n\rightarrow \infty} 
	\left(
	a_n^{-\xi} + a_n^{\alpha}
	\right)
	>0.  
	\end{align}
	If this inequality is false,  then there must exist a subsequence $\{a_{n_k}\}$ such that 
	$
	\lim_{k\rightarrow \infty} a_{n_k}^{-1} = 0
	$
	and 
	$
	\lim_{k\rightarrow \infty} a_{n_k} = 0,
	$ a clear contradiction. 
	
	Second, by the equality in \eqref{eq:proof_poly_opt_poly},  $A(r_n)+\varepsilon(r_n,n) \asymp n^{-\xi/(\alpha+\xi)}$ if and only if $a_n\asymp 1$.

	\item[(iv)] (Logarithmic Decay). 
	First,  for any sequence $\{r_n = a_n n^{1/\alpha}/\log^{\xi/\alpha}(n)\}$, 
	\begin{align}\label{eq:proof_poly_opt_log_goal_para}
	\liminf_{n\rightarrow \infty}\frac{\log^{-\xi}(r_n) + r_n^\alpha/n}{\log^{-\xi}(n)} & = 
	\liminf_{n\rightarrow \infty}
	\left\{
	\left(
	\frac{\log(a_n)}{\log(n)} + \alpha^{-1} - \frac{\xi}{\alpha}
	\frac{\log\log(n)}{\log(n)}
	\right)^{-\xi}
	+ a_n^{\alpha}
	\right\} >0. 
	\end{align}
	If this inequality is false,  then there must exist a subsequence $\{a_{n_k}\}$ such that
	\begin{align*}
	\lim_{k\rightarrow \infty} \left(
	\frac{\log(a_{n_k})}{\log(n_k)} + \alpha^{-1} - \frac{\xi}{\alpha}
	\frac{\log\log(n_k)}{\log(n_k)}
	\right)^{-\xi} = 0 \ 
	\text{ and } \ 
	\lim_{k\rightarrow \infty}a_{n_k}^{\alpha}=0, 
	\end{align*}
	which implies that (i) $\lim_{k\rightarrow\infty}a_{n_k} = \infty$  
	and (ii) $\lim_{k\rightarrow\infty}a_{n_k} = 0$, a clear contradiction.

	Second, by the equality in  \eqref{eq:proof_poly_opt_log_goal_para}, $A(r_n) + r_n^\alpha/n \asymp \log^{-\xi}(n)$ if and only if 
	\begin{align*}
	a_n = O(1), \quad 
    \liminf_{n\rightarrow \infty} \frac{\log(a_n)}{\log(n)} > -\alpha^{-1}.
	\end{align*} 
\end{itemize}

\section{Proof of Theorem \ref{th:contzero}}
From \eqref{eq:pred_loss_linear} and the fact that $\tau^2=0$, the prediction error has the following equivalent form at resolution $r$: 
\begin{align*}
\E_n \E\{ [Y - g(\vec{\bm{X}}_{r}, \hat{\bm{\theta}}_r)]^2\} & = A(r) + \varepsilon(r, n)
= A(r)
\frac{(n+1)(n-2)}{n(n-r-2)}.
\end{align*}
Below we consider the four cases depending on the decay rate of $A(r)$. 
\begin{itemize}
	\item[(i)] (Hard Thresholding). 
	First, for any sequence $\{r_n\}$, $A(r_n) + \varepsilon(r_n, n) \ge 0$. 
	
	Second, it is easy to verify that $A(r_n) + \varepsilon(r_n, n) = 0$ for sufficiently large $n$ if and only if 	$\liminf_{n\rightarrow \infty} r_n\geq r_0$ and $r_n \le n-3$.

	\item[(ii)] (Exponential Decay). 
	First, for any sequence $\{r_n = n - a_n\}$, 
	\begin{align}\label{eq:expo_0}
	\liminf_{n \rightarrow \infty} 
	\frac{
		e^{-\xi r_n}
		\frac{(n+1)(n-2)}{n(n-r-2)}
	}{
		n e^{-\xi n}
	}
	= 
	\liminf_{n \rightarrow \infty}  
	\frac{e^{\xi a_n}}{a_n-2} 
	\ge
	\liminf_{n \rightarrow \infty} \frac{\xi a_n + 1}{a_n-2}
	\ge \xi  > 0. 
	\end{align}
	Second, by the equality in \eqref{eq:expo_0}, 
	$A(r_n) + \varepsilon(r_n, n) \asymp n e^{-\xi n}$ if and only if $a_n = O(1)$ and $a_n\ge 3$; recall $a_n=n-r_n$ takes integer value only.  
	
	\item[(iii)] (Polynomial Decay). 
	First, by an inequality for the weighted arithmetic mean and weighted geometric mean, 
	\begin{align*}
	(r/\xi)^{\xi} (n-r-2)
	\le 
	\left( \frac{
		\xi \cdot r/\xi + n-r-2	
	}{
		\xi+1
	} \right)^{\xi+1}
	\le 
	\left( \frac{n}{\xi+1} \right)^{\xi+1},
	\end{align*}
	which immediately implies that 
	$
	r^{\xi} (n-r-2) \le \xi^\xi/(\xi+1)^{\xi+1} \cdot n^{\xi+1}. 
	$
	Therefore, for any sequence $\{r_n\}$, 
	\begin{align*}
	\liminf_{n \rightarrow \infty} 
	\frac{
		r_n^{-\xi}
		\frac{(n+1)(n-2)}{n(n-r-2)}
	}{
		n^{-\xi}
	}
	=
	\liminf_{n \rightarrow \infty} 
	\frac{
		n^{(\xi+1)}
	}{
		r_n^{\xi}
		(n-r-2)
	}
	\ge \frac{(\xi+1)^{\xi+1}}{\xi^\xi} > 0. 
	\end{align*}
	Second, 
	for any sequence $\{r_n = a_n n\}$, 
	\begin{align*}
	\frac{
		r_n^{-\xi}
		\frac{(n+1)(n-2)}{n(n-r-2)}
	}{
		n^{-\xi}
	}
	= \frac{(n+1)(n-2)}{n^2} \cdot a_n^{-\xi} \cdot \frac{1}{1 - a_n - 2n^{-1}} \asymp a_n^{-\xi} \cdot \frac{1}{1 - a_n - 2n^{-1}}. 
	\end{align*}
	Therefore, 
	$A(r_n) + \varepsilon(r_n, n) \asymp n^{-\xi}$ if and only if $0< \liminf a_n \le \limsup a_n < 1$.

	\item[(iv)] (Logarithmic Decay).
	First, for any sequence $\{r_n\}$, 
	\begin{align}\label{eq:logarithm_0}
	\liminf_{n \rightarrow \infty} 
	\frac{
		\log^{-\xi}(r_n)
		\frac{(n+1)(n-2)}{n(n-r-2)}
	}{
		\log^{-\xi}(n)
	}
	= 
	\liminf_{n \rightarrow \infty} 
	\left(\frac{\log n}{\log r_n}  \right)^{\xi} \cdot \frac{1}{1-2n^{-1} - r_n/n} \ge 1 > 0. 
	\end{align}
	Second, by the equality in \eqref{eq:logarithm_0}, 
	$A(r_n) + \varepsilon(r_n, n) \asymp \log^{-\xi}(n)$ if and only if $\limsup r_n/n < 1$, $\liminf \frac{\log r_n}{\log n} > 0$. 
\end{itemize}

\section{Technical Details for the Tree Models in Section \ref{sec:theory_tree}}\label{app:theory_tree}

\subsection{Expression for estimation error $\varepsilon(r,n)$}\label{sec:expr_estimation_binary}

For any $r\ge 1$ and any $\vec{\bm{x}}_{r} \in \{1, 2, \ldots, M\}^{r+1}$, 
let  $\mu(\bm{x}_r)=\E(Y\mid \vec{\bm{X}}_{r} = \bm{x}_r)$ and $\sigma^2(\vec{\bm{x}}_{r})=\Var(Y\mid \vec{\bm{X}}_{r} = \bm{x}_r)$. Also let $n(\vec{\bm{x}}_{r}) = \sum_{i=1}^{n} \I(\vec{\bm{x}}_{ir} = \vec{\bm{x}}_{r})$ be the number of units in the training set with covariate value $\vec{\bm{x}}_{r}$ up to resolution $r$.  
From 
\eqref{eq:quad}, we have  
\begin{align*}
\varepsilon(r, \trainingset_n) & = 
\E
\left\{
\left[
\hat{\bm{\theta}}_{r}(\vec{\bm{X}}_{r}) - \mu(\vec{\bm{X}}_{r})
\right]^2
\right\} = 
\sum_{\vec{\bm{x}}_{r}}
\pr(\vec{\bm{X}}_{r} = \vec{\bm{x}}_{r}) \cdot 	\left[
\hat{\bm{\theta}}_{r}(\vec{\bm{x}}_{r}) - \mu(\bm{x}_r)
\right]^2.
\end{align*}
By definition, we can simplify 
$\hat{\bm{\theta}}_{r}(\vec{\bm{x}}_{r})$
as 
\begin{align*}
\hat{\bm{\theta}}_{r}(\vec{\bm{x}}_{r}) 
& = 
\I( n(\vec{\bm{x}}_{r}) > 0 ) \hat{\bm{\theta}}_{r}(\vec{\bm{x}}_{r}) 
+ 
\sum_{k=0}^{r-1} \I(n(\vec{\bm{x}}_{k}) > 0, n(\vec{\bm{x}}_{k+1}) = 0) \hat{\bm{\theta}}_{r}(\vec{\bm{x}}_{k}). 
\end{align*}
The conditional expectation of $\left[
\hat{\bm{\theta}}_{r}(\vec{\bm{x}}_{r}) - \mu(\vec{\bm{x}}_r)
\right]^2$ given $n(\bm{a}_{\vec{r}})$ for all $\bm{a}_{\vec{r}} \in \{1, \ldots, M\}^{r+1}$ then has the following equivalent form:
\begin{align*}
& \quad \ 
\E_n\left\{\left[ 
\hat{\bm{\theta}}_{r}(\vec{\bm{x}}_{r}) - \mu(\vec{\bm{x}}_r)
\right]^2
\big|
n(\bm{a}_{\vec{r}}), \forall \bm{a}_{\vec{r}} \in \{1, \ldots, M\}^{r+1}
\right\}\\
& = 
\I( n(\vec{\bm{x}}_{r}) > 0 ) \cdot \E_n\left\{ \left[
\hat{\bm{\theta}}_{r}(\vec{\bm{x}}_{r}) - \mu(\vec{\bm{x}}_{r})
\right]^2
\big|
n(\bm{a}_{\vec{r}}), \forall \bm{a}_{\vec{r}} \in \{1, \ldots, M\}^{r+1}
\right\}
\\
& \quad \ 
+ 
\sum_{k=0}^{r-1} \I(n(\vec{\bm{x}}_{k}) > 0, n(\vec{\bm{x}}_{k+1}) = 0)
\cdot
\E_n\left\{\left[
\hat{\bm{\theta}}_{r}(\vec{\bm{x}}_{k}) - \mu(\vec{\bm{x}}_{r})
\right]^2
\big|
n(\bm{a}_{\vec{r}}), \forall \bm{a}_{\vec{r}} \in \{1, \ldots, M\}^{r+1}
\right\}\\
& = 
\I( n(\vec{\bm{x}}_{r}) > 0 ) \cdot \frac{\sigma^2(\vec{\bm{x}}_{r})}{n(\vec{\bm{x}}_{r})}
+ 
\sum_{k=0}^{r-1} \I(n(\vec{\bm{x}}_{k}) > 0, n(\vec{\bm{x}}_{k+1}) = 0)
\cdot
\left\{\left[ 
\mu(\vec{\bm{x}}_{k}) - \mu(\vec{\bm{x}}_{r})
\right]^2
+ 
\frac{\sigma^2(\vec{\bm{x}}_{k})}{n(\vec{\bm{x}}_{k})}
\right\}\\
& = 
\I( n(\vec{\bm{x}}_{r}) > 0 ) \cdot \frac{\sigma^2(\vec{\bm{x}}_{r})}{n(\vec{\bm{x}}_{r})}
+ 
\sum_{k=0}^{r-1} \I(n(\vec{\bm{x}}_{k}) > 0, n(\vec{\bm{x}}_{k+1}) = 0)
\cdot
\frac{\sigma^2(\vec{\bm{x}}_{k})}{n(\vec{\bm{x}}_{k})} 
\\
& \quad \ 
+ \sum_{k=0}^{r-1} \I(n(\vec{\bm{x}}_{k}) > 0, n(\vec{\bm{x}}_{k+1}) = 0)
\cdot
\left[ 
\mu(\vec{\bm{x}}_{k}) - \mu(\vec{\bm{x}}_{r})
\right]^2. 
\end{align*}
By the law of iterated expectation, this immediately implies that 
\begin{align*}
& \quad \ \E_n\left[
\hat{\bm{\theta}}_{r}(\vec{\bm{x}}_{r}) - \mu(\vec{\bm{x}}_r)
\right]^2
\\
& = 
\E_n\left\{
\E_n\left[ \left[
\hat{\bm{\theta}}_{r}(\vec{\bm{x}}_{r}) - \mu(\vec{\bm{x}}_r)
\right]^2
\bigg|
n(\bm{a}_{\vec{r}}), \forall \bm{a}_{\vec{r}} \in \{1, \ldots, M\}^{r+1}
\right]
\right\}
\\
& = 
\sigma^2(\vec{\bm{x}}_{r})  \cdot 
\E_n\left[ \frac{\I( n(\vec{\bm{x}}_{r}) > 0 )}{n(\vec{\bm{x}}_{r})}
\right]
+ 
\sum_{k=0}^{r-1} 
\sigma^2(\vec{\bm{x}}_{k})
\cdot
\E_n\left[
\frac{\I(n(\vec{\bm{x}}_{k}) > 0, n(\vec{\bm{x}}_{k+1}) = 0)}{n(\vec{\bm{x}}_{k})} \right]
\\
& \quad \ 
+ 
\sum_{k=0}^{r-1} 
\left[
\mu(\vec{\bm{x}}_{k}) - \mu(\vec{\bm{x}}_{r})
\right]^2
\cdot 
\E_n\left[ 
\I(n(\vec{\bm{x}}_{k}) > 0, n(\vec{\bm{x}}_{k+1}) = 0)
\right]\\
& = 
\sigma^2(\vec{\bm{x}}_{r})  \cdot 
\E_n\left[ \frac{\I( n(\vec{\bm{1}}_{r}) > 0 )}{n(\vec{\bm{1}}_{r})}
\right]
+ 
\sum_{k=0}^{r-1} 
\sigma^2(\vec{\bm{x}}_{k})
\cdot
\E_n\left[
\frac{\I(n(\vec{\bm{1}}_{k}) > 0, n(\vec{\bm{1}}_{k+1}) = 0)}{n(\vec{\bm{1}}_{k})} \right]
\\
& \quad \ 
+ 
\sum_{k=0}^{r-1}
\left[ 
\mu (\vec{\bm{x}}_{k}) - \mu(\vec{\bm{x}}_{r})
\right]^2
\cdot 
\E_n\left[ 
\I(n(\vec{\bm{1}}_{k}) > 0, n(\vec{\bm{1}}_{k+1}) = 0)
\right],
\end{align*}
where the last equality holds because of the symmetry rendered  by our assumption that the covariates are i.i.d. 
with the same probability taking values $1, 2, \ldots, M$. 

It follows then that
\begin{align*}
\varepsilon(r,n) = 
\E_n\left[\varepsilon(r, \trainingset_n) \right]
& = 
\E\left[\sigma^2(\vec{\bm{X}}_{r}) \right]  \cdot 
\E_n\left[ \frac{\I( n(\vec{\bm{1}}_{r}) > 0 )}{n(\vec{\bm{1}}_{r})}
\right]
\\
& \quad \ + 
\sum_{k=0}^{r-1} 
\E\left[\sigma^2(\vec{\bm{X}}_{k}) \right]
\cdot
\E_n\left[
\frac{\I(n(\vec{\bm{1}}_{k}) > 0, n(\vec{\bm{1}}_{k+1}) = 0)}{n(\vec{\bm{1}}_{k})} \right]
\\
& \quad \ 
+ 
\sum_{k=0}^{r-1} 
\E\left[ 
\mu (\vec{\bm{X}}_{k}) - \mu(\vec{\bm{X}}_{r})
\right]^2
\cdot 
\E_n\left[ 
\I(n(\vec{\bm{1}}_{k}) > 0, n(\vec{\bm{1}}_{k+1}) = 0)
\right]. 
\end{align*}
Noting that 
\begin{align*}
& \quad \ \E\left\{ \left[ 
\mu (\vec{\bm{X}}_{k}) - \mu(\vec{\bm{X}}_{r})
\right]^2
\right\} = 
\E\left\{
\Var\left[ 
\mu(\vec{\bm{X}}_{r})
\mid \vec{\bm{X}}_{k}
\right]
\right\}
= 
\E\left\{
\Var\left[ 
\E(Y\mid \vec{\bm{X}}_{r})
\mid \vec{\bm{X}}_{k}
\right]
\right\}\\
& = 
\E\left\{
\Var(Y\big| \vec{\bm{X}}_{k})
- 
\E\left[ 
\Var(Y\mid \vec{\bm{X}}_{r})
\big| \vec{\bm{X}}_{k}
\right]
\right\}
= 
\E\left[
\sigma^2(\vec{\bm{X}}_{k})
\right]
-
\E\left[
\sigma^2(\vec{\bm{X}}_{r})
\right],
\end{align*}
and that $\E\left[
\sigma^2(\vec{\bm{X}}_{k})
\right] =  A(k) + \tau^2$ for any $k \ge 0$,  
we  obtain  expression \eqref{eq:epsilon_r_n_binary}.

\subsection{Some technical lemmas}

\begin{lemma}\label{lemma:recip_binomial}
	For the $n(\vec{\bm{x}}_{k})$'s defined in Section \ref{sec:regression_tree}, we have, for any $k\ge 0$, 
	\begin{align*}
	\E_n\left[
	\frac{1}{n(\vec{\bm{1}}_{k})+1}
	\right] - \pr\left\{ n(\vec{\bm{1}}_{k}) = 0 \right\}
	\le 
	\E_n\left[ 
	\frac{\I( n(\vec{\bm{1}}_{k}) > 0 )}{n(\vec{\bm{1}}_{k})}
	\right]
	\le 
	2 \cdot
	\E_n\left[
	\frac{1}{n(\vec{\bm{1}}_{k})+1}
	\right],
	\end{align*}
	and 
	\begin{align*}
	\E_n\left[
	\frac{1}{n(\vec{\bm{1}}_{k})+1}
	\right]
	& = \frac{M^{k}}{n+1}
	\left\{
	1 - (1-M^{-k})^{n+1}
	\right\}.
	\end{align*}
\end{lemma}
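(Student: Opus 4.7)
The plan is to treat the two assertions separately, since the exact formula is a standard negative-moment computation for a binomial variable, while the two-sided bound reduces to an elementary pointwise comparison of $1/N$ and $1/(N+1)$.

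First, I would note that under model \eqref{eq:binary_model}, $n(\vec{\bm{1}}_k) = \sum_{i=1}^n \I(\vec{\bm{x}}_{ik} = \vec{\bm{1}}_k)$ is a sum of $n$ i.i.d.\ Bernoulli$(M^{-k})$ indicators, so $N \equiv n(\vec{\bm{1}}_k) \sim \text{Binomial}(n, M^{-k})$. Both claims then reduce to facts about such a binomial variable, which is how we will proceed.

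For the two-sided bound, the key is the elementary inequality: for any integer $N \ge 1$, $N + 1 \le 2N$, so $\frac{1}{N+1} \le \frac{1}{N} \le \frac{2}{N+1}$. Multiplying by $\I(N>0)$ and taking expectation gives
\begin{equation*}
\E_n\!\left[\frac{\I(N>0)}{N+1}\right] \;\le\; \E_n\!\left[\frac{\I(N>0)}{N}\right] \;\le\; 2\,\E_n\!\left[\frac{\I(N>0)}{N+1}\right] \;\le\; 2\,\E_n\!\left[\frac{1}{N+1}\right],
\end{equation*}
which is the upper bound. For the lower bound, write
\begin{equation*}
\E_n\!\left[\frac{\I(N>0)}{N+1}\right] = \E_n\!\left[\frac{1}{N+1}\right] - \E_n\!\left[\frac{\I(N=0)}{N+1}\right] = \E_n\!\left[\frac{1}{N+1}\right] - \pr\{N=0\},
\end{equation*}
using $1/(N+1) = 1$ when $N = 0$. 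Chaining gives exactly the stated lower bound.

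For the exact formula, I would apply the combinatorial identity $\frac{1}{j+1}\binom{n}{j} = \frac{1}{n+1}\binom{n+1}{j+1}$ to write, with $p = M^{-k}$,
\begin{equation*}
\E_n\!\left[\frac{1}{N+1}\right] = \sum_{j=0}^n \frac{\binom{n}{j}}{j+1} p^j (1-p)^{n-j} = \frac{1}{p(n+1)}\sum_{i=1}^{n+1}\binom{n+1}{i} p^i (1-p)^{n+1-i},
\end{equation*}
after reindexing $i = j+1$. By the binomial theorem, the inner sum equals $1 - (1-p)^{n+1}$, and substituting $p = M^{-k}$ yields the claimed closed form $\frac{M^k}{n+1}\{1 - (1-M^{-k})^{n+1}\}$.

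There is no real obstacle here; the only thing to be careful about is the index shift in the combinatorial identity and remembering that $\I(N=0)/(N+1) = \I(N=0)$, both of which are bookkeeping rather than substantive difficulties.
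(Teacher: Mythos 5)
Your proof is correct and follows essentially the same route as the paper: the pointwise inequalities $\I(z>0)/z \le 2/(z+1)$ and $\I(z>0)/(z+1) = 1/(z+1) - \I(z=0)$ for the two-sided bound, and the standard $\frac{1}{j+1}\binom{n}{j} = \frac{1}{n+1}\binom{n+1}{j+1}$ reindexing for the exact negative-moment formula (which the paper leaves implicit by simply citing the Binomial distribution of $n(\vec{\bm{1}}_k)$). No gaps.
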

\begin{proof}[Proof:] The first part (i.e., inequality) follows from the fact that for any integer $z\ge 0$, 
	\begin{align*}
	\frac{\I(z>0)}{z} & \ge \frac{\I(z>0)}{z+1} = \frac{1-\I(z=0)}{z+1} 
    = 
	\frac{1}{z+1} - \I(z=0),
	\end{align*}
	and 
	\begin{align*}
	\frac{\I(z>0)}{z} = \frac{(z+1)\I(z>0)}{z(z+1)} \le \frac{2z}{z(z+1)} = \frac{2}{z+1}. 
	\end{align*}
	The second part (i.e., equality) follows from the fact that 
	$n(\vec{\bm{1}}_{k}) \sim \text{Binomial}(n, M^{-k})$.  
\end{proof}

\begin{lemma}\label{lemma:one_over_nM}
	For the  $n(\vec{\bm{x}}_{k})$'s defined in Section \ref{sec:regression_tree},
	recall that 
	$$
	\mathcal{K} = \max\left\{
	k: 
	n(\vec{\bm{1}}_{k})  > 0, n(\vec{\bm{1}}_{k+1}) = 0
	\right\},
	$$
	and 
	$\mathcal{K}\wedge r = \min\{\mathcal{K}, r\}$ 
	for any $r\ge 0$. 
	Then 
	\begin{itemize}
		\item[(a)] for any $r\ge 0$, 
		\begin{align*}
		\E_n\left[ \frac{1}{n(\vec{\bm{1}}_{\mathcal{K} \wedge r})} \right]
		& = 
		\E_n\left[ \frac{\I( n(\vec{\bm{1}}_{r}) > 0 )}{n(\vec{\bm{1}}_{r})}
		\right]
		+ 
		\sum_{k=0}^{r-1} 
		\E_n\left[
		\frac{\I(n(\vec{\bm{1}}_{k}) > 0, n(\vec{\bm{1}}_{k+1}) = 0)}{n(\vec{\bm{1}}_{k})} \right]. 
		\end{align*}
		\item[(b)] 
		$
		\E_n\left[ \frac{1}{n(\vec{\bm{1}}_{\mathcal{K} \wedge r})} \right]
		$
		is a non-decreasing function of  $r$. 
	\end{itemize}
\end{lemma}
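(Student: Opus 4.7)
The plan is to treat both parts as careful bookkeeping on a partition of the sample space, with no probabilistic heavy lifting needed.

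For part (a), I would first observe that the events $\{n(\vec{\bm{1}}_{r}) > 0\}$ together with $\{n(\vec{\bm{1}}_{k}) > 0, n(\vec{\bm{1}}_{k+1}) = 0\}$ for $k = 0, 1, \ldots, r-1$ form a partition of the sample space. This is valid because $n(\vec{\bm{1}}_{0}) = n > 0$ is deterministic and the counts are non-increasing in the resolution (requiring more coordinates to match $\vec{\bm{1}}$ can only shrink the matching set), so exactly one of these events occurs. I would then check the value of $1/n(\vec{\bm{1}}_{\mathcal{K}\wedge r})$ on each atom: on $\{n(\vec{\bm{1}}_{r}) > 0\}$ one has $\mathcal{K} \ge r$, so $\mathcal{K} \wedge r = r$ and $1/n(\vec{\bm{1}}_{\mathcal{K}\wedge r}) = 1/n(\vec{\bm{1}}_{r})$; on $\{n(\vec{\bm{1}}_{k}) > 0, n(\vec{\bm{1}}_{k+1}) = 0\}$ with $k < r$ one has $\mathcal{K} = k$, so $\mathcal{K} \wedge r = k$ and $1/n(\vec{\bm{1}}_{\mathcal{K}\wedge r}) = 1/n(\vec{\bm{1}}_{k})$. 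Multiplying these by the corresponding indicators, summing, and taking $\E_n$ yields exactly the identity in (a).

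For part (b), the plan is to establish the pointwise almost-sure monotonicity
\begin{align*}
n(\vec{\bm{1}}_{\mathcal{K}\wedge (r+1)}) \le n(\vec{\bm{1}}_{\mathcal{K}\wedge r}),
\end{align*}
from which inverting (both quantities are at least $1$ by definition of $\mathcal{K}$) and taking $\E_n$ delivers the conclusion. I would split into two cases: when $\mathcal{K} \le r$, both $\mathcal{K}\wedge r$ and $\mathcal{K}\wedge (r+1)$ equal $\mathcal{K}$, so equality is trivial; when $\mathcal{K} \ge r+1$, we have $\mathcal{K}\wedge r = r$ and $\mathcal{K}\wedge (r+1) = r+1$, and the inequality reduces to the obvious $n(\vec{\bm{1}}_{r+1}) \le n(\vec{\bm{1}}_{r})$, because $\vec{\bm{x}}_{i,r+1} = \vec{\bm{1}}_{r+1}$ is strictly more restrictive than $\vec{\bm{x}}_{i,r} = \vec{\bm{1}}_{r}$. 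An alternative route is to derive (b) directly from (a): the right-hand side of the identity in (a) is manifestly non-decreasing in $r$ since increasing $r$ to $r+1$ replaces the term $\E_n[\I(n(\vec{\bm{1}}_{r})>0)/n(\vec{\bm{1}}_{r})]$ by a new boundary term plus a non-negative contribution that accounts for $\{n(\vec{\bm{1}}_{r})>0, n(\vec{\bm{1}}_{r+1})=0\}$.

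I do not anticipate any substantive obstacle: the lemma is essentially a combinatorial identity dressed in probabilistic notation. The only point requiring genuine care is to verify that the partition in (a) is both exhaustive (using $n(\vec{\bm{1}}_{0}) = n > 0$) and disjoint (using the nested nature of the counts), and that $\mathcal{K}$ is almost surely well defined so that $n(\vec{\bm{1}}_{\mathcal{K}\wedge r}) \ge 1$ a.s. and reciprocals are benign.
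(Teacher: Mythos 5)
Your proposal is correct and follows essentially the same route as the paper: part (a) is the partition-of-the-sample-space identity that the paper dispatches as ``follows immediately from its definition,'' and part (b) is exactly the paper's observation that $\mathcal{K}\wedge r$ is non-decreasing in $r$, hence $n(\vec{\bm{1}}_{\mathcal{K}\wedge r})$ is non-increasing and its reciprocal has non-decreasing expectation. You have merely spelled out the case checks that the paper leaves implicit.
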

\begin{proof}[Proof:]
	Part (a) in Lemma \ref{lemma:one_over_nM} follows immediately from its definition. 
	Part (b) follows from the simple fact that 
	$(\mathcal{K} \wedge r)$ is non-decreasing
	in
	$r$, and hence 
	$n(\vec{\bm{1}}_{\mathcal{K} \wedge r})$ is non-increasing in $r$, which immediately implies (b). 
\end{proof}

\begin{lemma}\label{lemma:br}
	Let $b(r) = (1 - M^{-r})^{M^r}$ be a function of $r$. 
	Then $b(r)$ is increasing in $r$, and 
	$b(r) \le \lim_{k\rightarrow \infty} b(k) = e^{-1}$. 
\end{lemma}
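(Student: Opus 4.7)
The plan is to reduce everything to the classical fact that $(1-1/n)^n$ is strictly increasing in $n$ with limit $e^{-1}$. Set $n(r) \equiv M^r$. Since $M\ge 2$, the map $r \mapsto n(r)$ is strictly increasing and tends to $\infty$ as $r \to \infty$, so it suffices to prove that $f(n) \equiv (1 - 1/n)^n$ is strictly increasing in $n$ on $[1,\infty)$ with $\lim_{n\to\infty} f(n) = e^{-1}$; the conclusions for $b$ then follow by composition.

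The limit is just the standard definition of $e^{-1}$. For monotonicity, I would take logarithms and work with $g(n) = n \log(1 - 1/n)$. A direct calculation gives
\[
g'(n) = \log\!\left(1 - \frac{1}{n}\right) + \frac{1}{n-1}, \qquad
g''(n) = \frac{1}{n(n-1)} - \frac{1}{(n-1)^2} = -\frac{1}{n(n-1)^2} < 0 \quad \text{for } n>1.
\]
Hence $g'$ is strictly decreasing on $(1,\infty)$. Since both terms in $g'(n)$ tend to $0$ as $n\to\infty$, we have $g'(n)\downarrow 0$, and therefore $g'(n) > 0$ for every finite $n > 1$. Thus $g$, and consequently $f=e^g$, is strictly increasing on $(1,\infty)$.

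Because $b$ is actually evaluated only at non-negative integer $r$ in the application, a more elementary alternative is to apply AM-GM to $n$ copies of $(1-1/n)$ together with one copy of $1$, which gives
\[
\bigl[(1-1/n)^n \cdot 1 \bigr]^{1/(n+1)} \le \frac{n(1-1/n)+1}{n+1} = 1 - \frac{1}{n+1},
\]
i.e., $f(n) \le f(n+1)$ for every integer $n \ge 1$; iterating along $n = M^r, M^{r+1},\ldots$ gives the discrete monotonicity directly. Either route delivers $b(r) < \lim_{k\to\infty} b(k) = e^{-1}$ for all finite $r$.

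There is no substantive obstacle: the lemma is a packaging of a textbook one-variable fact, and the only step requiring any care is verifying strict (rather than merely weak) monotonicity, which the sign of $g''$ (or the strict form of AM-GM, since $1 \ne 1 - 1/n$ when $n>1$) settles at once.
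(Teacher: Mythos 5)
Your proposal is correct and follows essentially the same route as the paper, which simply invokes the well-known fact that $(1-x^{-1})^x$ increases on $(1,\infty)$ to the limit $e^{-1}$ and composes it with the increasing map $r\mapsto M^r$. You additionally supply a proof of that classical fact (both the derivative computation, which checks out, and the AM--GM alternative), which the paper omits; this is extra rigor rather than a different approach.
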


\begin{proof}[Proof:]
	This result follows from the well-known fact that 
	the function $(1-x^{-1})^x$ monotonically  increases with $x \in (1, \infty)$, approaching a limit $e^{-1}$ as $x\rightarrow \infty$. 
\end{proof}

\begin{lemma}\label{lemma:np}
	For any $n \ge 1$ and $p \in [0,1]$, $n p (1-p)^n \le 1$. 
\end{lemma}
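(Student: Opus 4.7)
The plan is to treat $f(p) = np(1-p)^n$ as a function of $p \in [0,1]$ for fixed $n$ and locate its maximum. Since $f(0) = f(1) = 0$ and $f$ is smooth on $[0,1]$, the maximum is attained at an interior critical point. I would compute $f'(p) = n(1-p)^{n-1}\bigl[(1-p) - np\bigr] = n(1-p)^{n-1}\bigl[1 - (n+1)p\bigr]$, which vanishes (inside $(0,1)$) only at $p^\ast = 1/(n+1)$.

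Substituting back gives
\begin{align*}
f(p^\ast) = n \cdot \frac{1}{n+1} \cdot \Bigl(\frac{n}{n+1}\Bigr)^n = \Bigl(\frac{n}{n+1}\Bigr)^{n+1}.
\end{align*}
Because $n/(n+1) < 1$, this quantity is strictly less than $1$, which immediately yields $np(1-p)^n \le 1$ for all $p\in[0,1]$ and all $n\ge 1$.

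An even shorter route that I would mention as an alternative uses the elementary inequality $1-p \le e^{-p}$ valid for all real $p$. Then $np(1-p)^n \le np\, e^{-np}$, and setting $x = np \ge 0$, the function $x \mapsto xe^{-x}$ attains its global maximum $e^{-1}$ at $x=1$. Hence in fact $np(1-p)^n \le e^{-1} < 1$, a strictly sharper bound than what the lemma requires. Either route is essentially a one-line calculus argument, so I do not expect any real obstacle; the only minor subtlety is handling the boundary points $p=0$ and $p=1$ (trivial, since $f$ vanishes there) and noting that the critical point $1/(n+1)$ indeed lies in $(0,1)$ for every $n\ge 1$.
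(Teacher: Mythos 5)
Your argument is correct, but it takes a genuinely different route from the paper. The paper's proof is a one-line probabilistic identity: letting $Z\sim \text{Binomial}(n+1,p)$, one has $np(1-p)^n \le (n+1)p(1-p)^n = \pr(Z=1) \le 1$, so the bound follows because a probability cannot exceed one. You instead optimize $f(p)=np(1-p)^n$ over $p$ by calculus, finding the maximizer $p^\ast = 1/(n+1)$ and the maximum value $\bigl(n/(n+1)\bigr)^{n+1}<1$; your alternative via $1-p\le e^{-p}$ and $xe^{-x}\le e^{-1}$ is also valid (note $(1-p)^n\le e^{-np}$ requires $1-p\ge 0$, which holds on $[0,1]$). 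What each approach buys: your calculus route yields the sharp constant $\bigl(n/(n+1)\bigr)^{n+1}$, strictly below $e^{-1}$, which is stronger than what the lemma asserts, whereas the paper's probabilistic trick is shorter, avoids any differentiation or boundary checking, and generalizes immediately to bounding $\binom{n+1}{k}p^k(1-p)^{n+1-k}\le 1$ for any $k$. For the purpose the lemma serves in the paper (a crude $O(M^k/n)$ bound on $\pr[n(\vec{\bm{1}}_{k+1})=0]$), the constant is immaterial, so both proofs are equally adequate.
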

\begin{proof}[Proof:]
	Let $Z\!\sim\! \text{Binomial}(n+1, p)$. Then $np(1-p)^n\!\le\!\!(n+1) p (1-p)^n\! =\! \pr(Z=1)\! \le\! 1$.
\end{proof}

\begin{lemma}\label{lemma:nk_inverse_k_kplus1}
    For the $n(\vec{\bm{x}}_{k})$'s defined in Section \ref{sec:regression_tree},
    we have, for any $n\ge 1$ and $k\ge 0$, 
    \begin{align*}
    & \quad \ \E_n\left[
    \frac{\I(n(\vec{\bm{1}}_{k}) > 0, n(\vec{\bm{1}}_{k+1}) = 0)}{n(\vec{\bm{1}}_{k})} \right]
    \\
    & \ge 
    \frac{M^k}{(n+1)(1-M^{-1})} 
    \left[
    \left( 1 - M^{-k} \right)^{\frac{n+1}{M}} - (1-M^{-k})^{n+1}
    \right]
    - 
    (1-M^{-k})^n. 
\end{align*}
\end{lemma}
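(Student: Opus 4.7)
The plan is to reduce the left-hand side to an expectation involving only $N := n(\vec{\bm{1}}_k) \sim \text{Binomial}(n, M^{-k})$, then bound it by a cleanly computable sum.

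First I would condition on $N$. Since $n(\vec{\bm{1}}_{k+1}) \mid N \sim \text{Binomial}(N, M^{-1})$, we have $\Pr(n(\vec{\bm{1}}_{k+1}) = 0 \mid N) = (1 - M^{-1})^N$, so that
\begin{align*}
\E_n\left[\frac{\I(N>0,\, n(\vec{\bm{1}}_{k+1})=0)}{N}\right]
 = \E_n\!\left[\frac{\I(N>0)\,(1-M^{-1})^N}{N}\right].
\end{align*}
Next I would use the elementary inequality $\I(z>0)/z \ge 1/(z+1) - \I(z=0)$ for integers $z \ge 0$ (already used in Lemma~\ref{lemma:recip_binomial}). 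Since $(1-M^{-1})^N = 1$ on $\{N=0\}$, this gives
\begin{align*}
\E_n\!\left[\frac{\I(N>0)(1-M^{-1})^N}{N}\right]
\;\ge\;
\E_n\!\left[\frac{(1-M^{-1})^N}{N+1}\right] - (1-M^{-k})^n.
\end{align*}

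The main computational step is to evaluate $\E_n[(1-M^{-1})^N/(N+1)]$ in closed form. With $p = M^{-k}$ and $q = M^{-1}$, the identity $\binom{n}{j}/(j+1) = \binom{n+1}{j+1}/(n+1)$ together with the binomial theorem yields
\begin{align*}
\E_n\!\left[\frac{(1-q)^N}{N+1}\right]
 = \frac{1}{(n+1)(1-q)p}\left[(1 - pq)^{n+1} - (1-p)^{n+1}\right]
\end{align*}
after re-indexing $i = j+1$ and factoring out $(1-q)p$. Substituting back $p = M^{-k}$, $q = M^{-1}$, this equals
\begin{align*}
\frac{M^k}{(n+1)(1 - M^{-1})}\left[(1 - M^{-(k+1)})^{n+1} - (1 - M^{-k})^{n+1}\right].
\end{align*}

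The only remaining obstacle is to convert $(1 - M^{-(k+1)})^{n+1}$ into the bound $(1 - M^{-k})^{(n+1)/M}$ appearing in the lemma. For this, Bernoulli's inequality gives $(1 - M^{-(k+1)})^M \ge 1 - M\cdot M^{-(k+1)} = 1 - M^{-k}$ for all integers $M\ge 1$. Raising both sides to the power $(n+1)/M \ge 0$, which is a monotone operation on nonnegative bases, produces $(1 - M^{-(k+1)})^{n+1} \ge (1 - M^{-k})^{(n+1)/M}$. Combining with the previous displays yields the stated inequality. The only subtlety is checking that all the factors I strip or move are nonnegative (e.g., $(1-pq) \ge (1-p)$, so the bracketed expression is nonnegative), but this is immediate from $p, q \in [0, 1]$.
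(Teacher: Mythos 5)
Your proposal is correct and follows essentially the same route as the paper's proof: condition on $n(\vec{\bm{1}}_{k})$ to replace the indicator event by $(1-M^{-1})^{N}$, apply the elementary bound $\I(z>0)/z \ge 1/(z+1) - \I(z=0)$ from Lemma~\ref{lemma:recip_binomial}, evaluate the resulting binomial sum in closed form via $\binom{n}{j}/(j+1)=\binom{n+1}{j+1}/(n+1)$, and finish with the Bernoulli-type inequality $(1-M^{-(k+1)})^{M}\ge 1-M^{-k}$. No gaps.
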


\begin{proof}[Proof:] For notational simplicity,
let $Z_k=n(\vec{\bm{1}}_{k})$, $p = M^{-k}$, and
$q=1-p.$ Then $Z_k\sim \text{Binomial}(n,p)$, and 
    $Z_{k+1}\mid Z_k \sim \text{Binomial}(Z_k, M^{-1})$. 
    By the law of iterated expectation, 
    \begin{align*}
    \E_n\left[
    \frac{\I(Z_{k} > 0, Z_{k+1} = 0)}{Z_k} \right]
    & = 
    \E_n\left[
    \frac{\I(Z_{k} > 0) \cdot \E_n\left\{
    \I( Z_{k+1} = 0) \mid Z_{k}
    \right\}
    }{Z_{k}}
    \right]\\
    & = 
    \E_n\left[
    \frac{\I(Z_{k} > 0) \cdot (1-M^{-1})^{Z_{k}}
    }{Z_{k}}
    \right]\\
    & \ge
    \E\left[\frac{(1-M^{-1})^{Z_k}}{Z_k+1} - \I(Z_k=0)\right]\\
    & = 
    \sum_{k=0}^n \binom{n}{k} p^k q^{n-k} \left( 1- M^{-1} \right)^k \frac{1}{k+1} 
    - 
    q^n\\
 \left[{\rm let}\ c_{n, M}=\frac{1}{(n+1)(1-M^{-1})} \right]\quad    & = 
    c_{n,M}p^{-1} \sum_{m=1}^{n+1} \binom{n+1}{m}  \left[p \left(1-M^{-1} \right) \right]^{m} q^{n+1-m} 
    - 
    q^n\\
    & =  c_{n,M}p^{-1} 
    \left[
    \left( 1 - \frac{p}{M} \right)^{n+1} - q^{n+1}
    \right]
    - 
    q^n.\\
 \left[{\rm note}\ \left(1-\frac{p}{M}\right)^M \ge 1-p=q\right]\quad  & \ge  
  c_{n,M}p^{-1} 
    \left( 
    q^{\frac{n+1}{M}} - q^{n+1}
    \right)
    - 
    q^n,
\end{align*}
which establishes the result because $p=M^{-k}$,
and $q=1-M^{-k}.$
\end{proof}

\begin{lemma}\label{lemma:lowerbound_large_r}
Let $\mathcal{S}$ be a set of infinitely many positive integers. Then for any positive integer sequence $\{r_{n},\  n\in \mathcal{S}\}$ satisfying  $M^{r_{n}}/n > c$ for all $n\in \mathcal{S}$ and some constant $c>0$, there must exist a countably infinite subset $S \subset \mathcal{S}$ 
    and a corresponding positive integer sequence $\{\tilde{r}_n: n \in S\}$
    such that 
    (i) $\tilde{r}_{n} \le r_{n}$ for all $n\in S$, 
    (ii)
    $M^{\tilde{r}_{n}}/n$ has a positive limit $\tilde c\in [c, Mc]$ as $n\in S$ goes infinity, 
    and 
    (iii) 
    $
    \liminf_{n \in S, n\rightarrow \infty} \varepsilon(r_{n}, n) / A(\tilde{r}_{n}) > 0. 
    $
\end{lemma}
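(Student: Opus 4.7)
The plan is to explicitly construct $\tilde{r}_n$ by trimming $r_n$ down into a compact window, pass to a convergent subsequence, and then lower‐bound $\varepsilon(r_n,n)$ using just the first term of the decomposition \eqref{eq:epsilon_r_n_binary} with $\tau^2=0$, restricted to the event $\{\mathcal{K}=\tilde{r}_n\}$. The heavy machinery (Lemma~A4) then delivers the positive limit.

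\textbf{Construction of $\tilde{r}_n$ and $S$.} Define $\tilde{r}_n \equiv \max\{k\ge 0: k\le r_n,\ M^k/n \le Mc\}$, which is well defined once $n\ge 1/(Mc)$. If $\tilde{r}_n<r_n$, maximality forces $M^{\tilde{r}_n+1}/n > Mc$, so $M^{\tilde{r}_n}/n>c$; if $\tilde{r}_n=r_n$, the hypothesis gives $M^{r_n}/n>c$. Either way $M^{\tilde{r}_n}/n \in (c, Mc]$, so by Bolzano--Weierstrass there is an infinite $S\subset\mathcal{S}$ along which $M^{\tilde{r}_n}/n\to \tilde{c}\in [c,Mc]$. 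This gives (i) and (ii); because $n\to\infty$ along $S$ while $M^{\tilde{r}_n}/n$ is bounded away from $0$, we also have $\tilde{r}_n\to\infty$, which I will use below.

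\textbf{Reduction to a single-term lower bound.} Dropping the non-negative bias sum in \eqref{eq:epsilon_r_n_binary}, with $\tau^2=0$,
\[
\varepsilon(r_n,n)\;\ge\; \E_n\!\left[\frac{A(\mathcal{K}\wedge r_n)}{n(\vec{\bm{1}}_{\mathcal{K}\wedge r_n})}\right]
\;\ge\;
\E_n\!\left[\frac{A(\mathcal{K}\wedge r_n)}{n(\vec{\bm{1}}_{\mathcal{K}\wedge r_n})}\,\I(\mathcal{K}=\tilde{r}_n)\right].
\]
On $\{\mathcal{K}=\tilde{r}_n\}$, the fact that $\tilde{r}_n\le r_n$ yields $\mathcal{K}\wedge r_n=\tilde{r}_n$. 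Moreover, since $n(\vec{\bm{1}}_{\tilde{r}_n+1})=0$ automatically forces $n(\vec{\bm{1}}_k)=0$ for every $k>\tilde{r}_n+1$, the event $\{\mathcal{K}=\tilde{r}_n\}$ coincides with $\{n(\vec{\bm{1}}_{\tilde{r}_n})>0,\ n(\vec{\bm{1}}_{\tilde{r}_n+1})=0\}$. Hence
\[
\varepsilon(r_n,n)\;\ge\; A(\tilde{r}_n)\cdot \E_n\!\left[\frac{\I(n(\vec{\bm{1}}_{\tilde{r}_n})>0,\ n(\vec{\bm{1}}_{\tilde{r}_n+1})=0)}{n(\vec{\bm{1}}_{\tilde{r}_n})}\right].
\]

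\textbf{Taking the limit via Lemma~A4.} Apply Lemma~A4 with $k=\tilde{r}_n$ and set $p=M^{-\tilde{r}_n}$, so $np\to 1/\tilde{c}$, $(1-p)^n\to e^{-1/\tilde{c}}$, $(1-p)^{(n+1)/M}\to e^{-1/(M\tilde{c})}$, and $M^{\tilde{r}_n}/(n+1)\to\tilde{c}$. The Lemma~A4 lower bound then converges to
\[
\phi(\tilde{c})\;\equiv\;\frac{\tilde{c}}{1-M^{-1}}\!\left(e^{-1/(M\tilde{c})}-e^{-1/\tilde{c}}\right)-e^{-1/\tilde{c}}.
\]
Writing $u=1/\tilde{c}>0$, the inequality $\phi(\tilde{c})>0$ reduces to $e^{u(1-1/M)}>1+u(1-1/M)$, which holds by $e^x>1+x$ for $x>0$. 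Dividing the bound on $\varepsilon(r_n,n)$ by $A(\tilde{r}_n)$ and passing to $\liminf$ over $n\in S$ yields $\liminf \varepsilon(r_n,n)/A(\tilde{r}_n)\ge \phi(\tilde{c})>0$, which is (iii).

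\textbf{Where the difficulty lies.} The construction in step one is elementary, and once the bound is reduced to a single event the calculation is essentially algebraic. The delicate point is choosing the right event: one wants $A(\mathcal{K}\wedge r_n)$ to be bounded below by $A(\tilde{r}_n)$ \emph{and} $n(\vec{\bm{1}}_{\mathcal{K}\wedge r_n})$ to be $O(1)$ in expectation, which is why locking onto $\{\mathcal{K}=\tilde{r}_n\}$ (rather than, say, $\{\mathcal{K}\le \tilde{r}_n\}$) is essential---it turns the estimation-error integrand into exactly the quantity that Lemma~A4 controls from below. A secondary subtlety is that the constant $\phi(\tilde{c})$ depends on the subsequential limit $\tilde{c}$; compactness of $[c,Mc]$ and strict positivity of $\phi$ there is what ensures the $\liminf$ is genuinely positive rather than degenerating at an endpoint.
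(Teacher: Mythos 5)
Your proof is correct and follows essentially the same route as the paper's: trim $r_n$ so that $M^{\tilde r_n}/n$ lands in $[c,Mc]$, extract a convergent subsequence by Bolzano--Weierstrass, reduce $\varepsilon(r_n,n)$ to the single term $A(\tilde r_n)\,\E_n\bigl[\I\bigl(n(\vec{\bm{1}}_{\tilde r_n})>0,\,n(\vec{\bm{1}}_{\tilde r_n+1})=0\bigr)/n(\vec{\bm{1}}_{\tilde r_n})\bigr]$, and invoke Lemma~\ref{lemma:nk_inverse_k_kplus1} to obtain the same positive limit. The only cosmetic difference is that you assume $\tau^2=0$ when discarding terms, whereas the paper keeps only the $A(k)$ parts so the same bound holds for any $\tau^2\ge 0$; this does not affect validity where the lemma is applied.
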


\begin{proof}[Proof:]  Let $\tilde r_n=\min\{r_n, \lceil\log_M(nc)\rceil\}$, where $\lceil{x}\rceil$ is the smallest integer that is not below $x$. Clearly (i) follows immediately.  For (ii), we note that 
the fact $r_n>\log_M(cn)$ implies
$\tilde r_n \ge \log_M(cn)$. On the other hand,  $\tilde r_n\le \lceil\log_M(nc)\rceil
\le \log_M(nc) + 1$.  Therefore the sequence 
$M^{\tilde{r}_{n}}/{n} \in [c, Mc]$ for all $n\in \mathcal{S}$. 
By the Bolzano--Weierstrass theorem, 
there must exist a subsequence $\{\tilde{r}_{n},\ n\in S\subset \mathcal{S}\}$ such that $M^{\tilde{r}_{n}}/n$ converges to some constant $\tilde{c} \in [c, Mc]$ as $n\rightarrow \infty$ but with $n\in S$. 

For part (iii), we note that 
from \eqref{eq:epsilon_r_n_binary}, 
    for any $n \in S$, 
    we can bound $\varepsilon(r_{n},n)$ by 
    \begin{align*}
    \varepsilon(r_{n},n)
    & \ge 
    \sum_{k=0}^{r_{n}} A(k) \cdot \E_{n}\left[
    \frac{\I(n(\vec{\bm{1}}_{k}) > 0, n(\vec{\bm{1}}_{k+1}) = 0)}{n(\vec{\bm{1}}_{k})} \right]\\
    & \ge 
    A(\tilde{r}_{n}) \cdot \E_n\left[
    \frac{\I(n(\vec{\bm{1}}_{\tilde{r}_{n}}) > 0, n(\vec{\bm{1}}_{\tilde{r}_{n}+1}) = 0)}{n(\vec{\bm{1}}_{\tilde{r}_{n}})} \right],
    \end{align*}
    where the last inequality holds because $\tilde{r}_{n} \le r_{n}$ by construction.  Consequently,  by Lemma \ref{lemma:nk_inverse_k_kplus1},  
    for any $n \in S$, we have  
    \begin{align}\label{eq:bound_large_r}
        \frac{\varepsilon(r_{n},n)}{A(\tilde{r}_{n})}
        & \ge 
    \frac{M^{\tilde{r}_{n}}}{(n+1)(1-M^{-1})} 
    \left[ 
    \left( 1 - M^{-\tilde{r}_{n}} \right)^{(n+1)/M} - (1-M^{-\tilde{r}_{n}})^{n+1}
    \right]
    - 
    (1-M^{-\tilde{r}_{n}})^{n}
    \nonumber\\
        & =
        \frac{M^{\tilde{r}_{n}}}{(n+1)(1-M^{-1})} 
        \left[
       b(\tilde{r}_{n})^{(n+1)/( M^{\tilde{r}_{n}}\cdot M)} - b(\tilde{r}_{n})^{ (n+1)/M^{\tilde{r}_{n}}}
        \right]
        - 
        b(\tilde{r}_{n})^{n/M^{\tilde{r}_{n}}},
    \end{align}
    recalling that $b(r) = (1-M^{-r})^{M^r}$. 
    By construction, 
    as $n\in S$ goes to infinity, 
    $M^{\tilde{r}_{n}}/(n+a) \rightarrow \tilde{c}$ for
    any fixed $a\ge 0$
    and
    $b(\tilde{r}_{n}) \rightarrow e^{-1}$. 
    Consequently, 
    \begin{align*}
     \liminf_{n \in S, n \rightarrow \infty} 
        \frac{\varepsilon(r_{n},n)}{A(\tilde{r}_{n})}
        & 
        \ge\frac{1}{1-M^{-1}} \tilde{c} 
        \left(
        e^{-1/(M\tilde{c})} - e^{-1/\tilde{c}}
        \right)
        - e^{-1/\tilde{c}}\\
        & = 
        \frac{1}{1-M^{-1}} \tilde{c} e^{-1/\tilde{c}}
        \left[
        e^{(1-M^{-1})/\tilde{c}} - 1 - \frac{1-M^{-1}}{\tilde{c}}
        \right] > 0, 
    \end{align*}
which concludes the proof. \end{proof}

\subsection{Asymptotic rate of $\varepsilon(r, n)$ when $\tau^2 > 0$}

First, we show that $M^r/n = o(1)$ is necessary for  $\varepsilon(r,n)\rightarrow 0$  as $n\rightarrow \infty$. We prove this by contradiction. 
Suppose that there exists a sequence $r_n$ such that $\varepsilon(r_n, n) = o(1)$, but $M^{r_n}/n\not = o(1)$. 
Then there must exist a subsequence $\{r_{n_i}\}$ such that $M^{r_{n_i}}/{n_i} > c$ for some constant $c>0$. 
 This allows us to invoke the subsequence $\{\tilde{r}_{n}: n \in {S}\}$ as defined by Lemma~\ref{lemma:lowerbound_large_r}. Consequently, by 
\eqref{eq:epsilon_r_n_binary} and Lemma \ref{lemma:one_over_nM}, 
for any $n\in S$, we have
\begin{align*}
\varepsilon(r_{n},n) \ge 
\tau^2 \cdot 
\E_{n}\left[ 
\frac{1}{n(\vec{\bm{1}}_{\mathcal{K} \wedge r_{n}})}
\right]
\ge 
\tau^2 \cdot
\E_{n}\left[ 
\frac{1}{n(\vec{\bm{1}}_{\mathcal{K}\wedge \tilde{r}_{n}})}
\right]\ge 
\tau^2  \cdot 
\E_{n}\left[ \frac{\I( n(\vec{\bm{1}}_{\tilde{r}_{n}}) > 0 )}{ n(\vec{\bm{1}}_{\tilde{r}_{n}}) }
\right],
\end{align*}
where the second inequality comes from the fact that $\tilde r_n\le r_n$. By Lemma \ref{lemma:recip_binomial}, this further implies that for $n\in {S}$, 
\begin{align}\label{eq:lower_bound_epsilon}
\frac{\varepsilon(r_{n},n)}{\tau^2}
& \ge 
\E_{n}\left[ \frac{\I( n(\vec{\bm{1}}_{\tilde{r}_{n}}) > 0 )}{ n(\vec{\bm{1}}_{\tilde{r}_{n}}) }
\right]
\ge 
\E_{n}\left[
\frac{1}{n(\vec{\bm{1}}_{\tilde{r}_{n}})+1}
\right] - \pr\left\{  n(\vec{\bm{1}}_{\tilde{r}_{n}}) = 0 \right\}
\nonumber
\\
& = 
\frac{M^{\tilde{r}_{n}}}{n+1}
	\left\{
	1 - (1-M^{-\tilde{r}_{n}})^{n+1}
	\right\}
	- \left( 1- M^{-\tilde{r}_{n}} \right)^{n}.
\end{align}
Since the sequence $\{\tilde{r}_{n}: n \in {S}\}$ here has the same properties as the one in 
\eqref{eq:bound_large_r}, 
the same limit calculation there leads to 
\begin{align*}
\frac{1}{\tau^2} \liminf_{n \in \mathcal{S}, n \rightarrow \infty} \varepsilon(r_{n},n)
& \ge 
\tilde{c} 
\left(
1 - e^{-1/\tilde{c}}
\right)
- 
e^{-1/\tilde{c}}
= 
\tilde{c} e^{-1/\tilde{c}}
\left[
e^{1/\tilde{c}} - \left( 1 + \frac{1}{\tilde{c}} \right)
\right]
> 0. 
\end{align*}
However, this contradicts the fact that $\varepsilon(r_n,n) = o(1)$. 

We now prove $\varepsilon(r_n,n) = M^r/n$ when $M^r/n = o(1)$. 
Following the same reasoning as for \eqref{eq:lower_bound_epsilon}, we have 
\begin{align}
\frac{\varepsilon(r,n)}{\tau^2}
& \ge
\E_n\left[ \frac{\I( n(\vec{\bm{1}}_{r}) > 0 )}{n(\vec{\bm{1}}_{r})}
\right]
\ge 
\frac{M^{r}}{n+1}
\left[
1 - (1- M^{-r} )^{ M^{r} \cdot (n+1)/M^{r}}
\right]
- 
(1-M^{-r})^{M^{r} \cdot n/ M^{r}}. 
\end{align}
By Lemma \ref{lemma:br}, this further implies that 
\begin{align*}
\frac{\varepsilon(r,n)}{\tau^2}
& \ge
\frac{M^{r}}{n+1}
\left[
1 - e^{-(n+1)/M^{r}}
\right]
- 
e^{-n/ M^{r}} \\
& = 
\frac{M^r}{n} \cdot \frac{n}{n+1} 
\left[
1 - e^{-(n+1)/M^{r}} - \frac{n+1}{n}\frac{n}{M^r} e^{-n/M^r}
\right]. 
\end{align*}
Note that when $M^r/n = o(1)$, we must have $e^{-(n+1)/M^{r}} = o(1)$ and $n/M^r \cdot e^{-n/M^r} = o(1)$. 
Thus, for sufficiently large $n$, we deduce that
$$
\varepsilon(r,n) \ge \tau^2 \cdot \frac{1}{2} \cdot \frac{M^r}{n} = \frac{\tau^2}{2} \cdot \frac{M^r}{n}. 
$$

From \eqref{eq:epsilon_r_n_binary}, we also have 
\begin{align*}
\varepsilon(r,n)
& \le 
\left[ A(0) +  \tau^2 \right]  \cdot 
\E_n\left[ \frac{\I( n(\vec{\bm{1}}_{r}) > 0 )}{n(\vec{\bm{1}}_{r})}
\right]
\\
& \quad \ + 
\left[ A(0) +  \tau^2 \right]
\cdot \sum_{k=0}^{r-1} 
\E_n\left[
\I(n(\vec{\bm{1}}_{k}) > 0, n(\vec{\bm{1}}_{k+1}) = 0) \right]
\\
& \quad \ 
+ 
A(0) \cdot \sum_{k=0}^{r-1} 
\E_n\left[ 
\I(n(\vec{\bm{1}}_{k}) > 0, n(\vec{\bm{1}}_{k+1}) = 0)
\right]\\
& = 
\left[ A(0) +  \tau^2 \right]  \cdot 
\E_n\left[ \frac{\I( n(\vec{\bm{1}}_{r}) > 0 )}{n(\vec{\bm{1}}_{r})}
\right]
+ 
\left[ 2 A(0) +  \tau^2 \right] \cdot \pr\left[ n(\vec{\bm{1}}_{r}) = 0 \right],
\end{align*}
where the last equality holds because 
$\I(n(\vec{\bm{1}}_{k}) > 0, n(\vec{\bm{1}}_{k+1}) = 0) = \I(n(\vec{\bm{1}}_{k+1}) = 0) - \I(n(\vec{\bm{1}}_{k}) = 0)$ and $n(\vec{\bm{1}}_{0}) \equiv n > 0$. 
Following Lemmas \ref{lemma:recip_binomial} and \ref{lemma:br}, we deduce that
\begin{align*}
\varepsilon(r,n)
& \le 
2\left[ A(0) +  \tau^2 \right]  \cdot 
\frac{M^r}{n+1}
+ 
\left[2 A(0) +  \tau^2 \right]  \cdot 
(1-M^{-r})^{M^r \cdot n/M^r}\\
& \le 
2\left[ A(0) +  \tau^2 \right]  \cdot
\left(
\frac{M^r}{n} + e^{-n/M^r}
\right) = O\left( \frac{M^r}{n} \right),
\end{align*}
where the last equality holds because $M^r/n = o(1)$. This completes our proof.

\subsection{Asymptotic rate of $\varepsilon(r, n)$ when $\tau^2 = 0$}

Because it is more challenging to study lower bounds for the estimation error in this case, 
below we consider only upper bounds, which give sufficient conditions for the estimation error to achieve a certain rate. 

First, we give an upper bound for the estimation error. 
From \eqref{eq:epsilon_r_n_binary}  , we can bound the estimation error $\varepsilon(r, n)$ from above by 
\begin{align*}
\varepsilon(r,n)
& \le 
A(r) \cdot 
\E_n\left[ \frac{\I( n(\vec{\bm{1}}_{r}) > 0 )}{n(\vec{\bm{1}}_{r})}
\right]
+ 
\sum_{k=0}^{r-1} 
A(k)
\cdot
\E_n\left[
\I(n(\vec{\bm{1}}_{k}) > 0, n(\vec{\bm{1}}_{k+1}) = 0)
\right]
\\
& \quad \ 
+ 
\sum_{k=0}^{r-1} 
\left[
A(k) - A(r)
\right]
\cdot 
\E_n\left[ 
\I(n(\vec{\bm{1}}_{k}) > 0, n(\vec{\bm{1}}_{k+1}) = 0)
\right]\\
& \le 
A(r) \cdot 
\E_n\left[ \frac{\I( n(\vec{\bm{1}}_{r}) > 0 )}{n(\vec{\bm{1}}_{r})}
\right] 
+ 
2 \sum_{k=0}^{r-1} 
A(k)
\cdot 
\E_n\left[ 
\I(n(\vec{\bm{1}}_{k}) > 0, n(\vec{\bm{1}}_{k+1}) = 0)
\right]\\
& \le 
A(r) \cdot 
\E_n\left[ \frac{\I( n(\vec{\bm{1}}_{r}) > 0 )}{n(\vec{\bm{1}}_{r})}
\right] 
+ 
2 \sum_{k=0}^{r-1} 
A(k)
\cdot 
\pr \left[
n(\vec{\bm{1}}_{k+1}) = 0
\right]. 
\end{align*}
Note that from Lemmas \ref{lemma:recip_binomial} and \ref{lemma:np}, for any $k\ge 0$, 
\begin{align*}
\E_n\left[ 
\frac{\I( n(\vec{\bm{1}}_{k}) > 0 )}{n(\vec{\bm{1}}_{k})}
\right]
& \le 
2 \cdot
\E_n\left[
\frac{1}{n(\vec{\bm{1}}_{k})+1}
\right]
\le 
2
\frac{M^{k}}{n+1}
\le 
2
\frac{M^{k}}{n}, 
\end{align*}
and 
\begin{align*}
\pr \left[
n(\vec{\bm{1}}_{k+1}) = 0
\right] 
& = \left( 1 - M^{-(k+1)} \right)^n \le \frac{M^{k+1}}{n} = M \frac{M^k}{n}.
\end{align*}
Moreover, both terms $\E_n\left[ 
\I( n(\vec{\bm{1}}_{k}) > 0 )/n(\vec{\bm{1}}_{k})
\right]$
and 
$\pr \left[
n(\vec{\bm{1}}_{k+1}) = 0
\right]$ are less than or equal to 1. 
Thus, 
we can further bound the estimation error by 
\begin{align}\label{eq:upper_binary_zero}
\varepsilon(r,n)
& \le 
A(r) \cdot \min\left[ 2\cdot \frac{M^r}{n}, 1\right] + 
2 \sum_{k=0}^{r-1} A(k) \min \left\{ M\cdot \frac{M^{k}}{n}, 1 \right\}
\nonumber
\\
& \le 
2M \sum_{k=0}^r A(k) \cdot \min\left\{ \frac{M^k}{n}, 1 \right\} 
\le \frac{2M}{n} \sum_{k=0}^{r} M^k A(k). 
\end{align}

Second, we study the asymptotic rate of the upper bound \eqref{eq:upper_binary_zero} under varying decay rates for the resolution bias $A(r)$. 
We initially consider the exponential decay case, i.e., $A(r) \asymp e^{-\xi r}$. 
Note that  
\begin{align*}
\sum_{k=0}^{r}  M^k A(k)
& \asymp
\sum_{k=0}^{r}  M^k e^{-\xi k} 
= \sum_{k=0}^{r}  ( M e^{-\xi} )^k
= 
\begin{cases}
\frac{ 1 - ( M e^{-\xi} )^{r+1} }{
	1 - M e^{-\xi}
}, & \text{ if } M e^{-\xi} < 1,\\
r+1, & \text{ if } M e^{-\xi} = 1,\\
\frac{ ( M e^{-\xi} )^{r+1} - 1 }{
	M e^{-\xi} - 1
}, &  \text{ if } M e^{-\xi} > 1,
\end{cases}
\\
& \asymp
\begin{cases}
1, & \text{ if } M e^{-\xi} < 1,\\
r, & \text{ if } M e^{-\xi} = 1, \\
M^r A(r), & \text{ if } M e^{-\xi} > 1. 
\end{cases}
\end{align*}
From \eqref{eq:upper_binary_zero}, we know that 
\begin{align*}
\varepsilon(r,n)
& 
\le \frac{2M}{n} \sum_{k=0}^{r} M^k A(k)
\asymp 
\begin{cases}
n^{-1}, & \text{ if } M e^{-\xi} < 1,\\
r/n, & \text{ if } M e^{-\xi} = 1, \\
M^r A(r)/n, & \text{ if } M e^{-\xi} > 1. 
\end{cases}
\end{align*}
We now consider the polynomial decay case, i.e., $A(r) \asymp r^{-\xi}$. 
Note that as $k \rightarrow \infty$, 
$M^{k+1} (k+1)^{-\xi} / \{M^k k^{-\xi}\} \rightarrow M \ge 2 > 1$. 
Thus, 
there exists $K_0\ge 0$ such that when $k \ge K_0,$
$A(k)\le c k^{-\xi}$, for some constant $c$, and 
$
M^{k+1} (k+1)^{-\xi} / \{M^k k^{-\xi} \} \ge 3/2.
$
This implies that, for $r>K_0$,
\begin{align*}
M^rA(r)\le 
& \sum_{k=0}^{r} M^k A(k) \le 
\sum_{k=0}^{K_0-1} M^k A(k) + M^r r^{-\xi}c \sum_{k=K_0}^{r} \left( \frac{2}{3} \right)^{r-k}\\
& \le \sum_{k=0}^{K_0-1}M^k A(k) + M^r r^{-\xi} \cdot 3c = O\left(  M^r r^{-\xi}  \right) = O\left(  M^rA(r)   \right).
\end{align*}
Therefore, 
\begin{equation}\label{eq:epsilon-bound}
\varepsilon(r,n)
\le \frac{2M}{n} \sum_{k=0}^{r} M^k A(k)
\asymp 
\frac{1}{n} M^r A(r). 
\end{equation}
A similar argument shows that \eqref{eq:epsilon-bound} also holds when $A(r)$ has a logarithmic decay. 

Third, we consider the hard thresholding case, i.e., $A(r) = 0$ for $r \ge r_0$ and $A(r)> 0$ for $r< r_0$. 
When $r\ge r_0$, from \eqref{eq:epsilon_r_n_binary}, we have
\begin{align*}
\varepsilon(r,n)
& = 
\sum_{k=0}^{r_0-1} 
A(k) \cdot 
\left\{
\E_n\left[
\frac{\I(n(\vec{\bm{1}}_{k}) > 0, n(\vec{\bm{1}}_{k+1}) = 0)}{n(\vec{\bm{1}}_{k})} \right]
+ 
\E_n\left[ 
\I(n(\vec{\bm{1}}_{k}) > 0, n(\vec{\bm{1}}_{k+1}) = 0)
\right]
\right\}\\
& \le 
 2 \sum_{k=0}^{r_0-1} A(k) \cdot \E_n\left[ 
    \I(n(\vec{\bm{1}}_{k}) > 0, n(\vec{\bm{1}}_{k+1}) = 0)
    \right]\\
    & 
    \le 
    2 A(0) \sum_{k=0}^{r_0-1} \E_n\left[ 
    \I(n(\vec{\bm{1}}_{k}) > 0, n(\vec{\bm{1}}_{k+1}) = 0)
    \right] \\ 
   & = 2 A(0) \cdot 
    \pr\left\{ n(\vec{\bm{1}}_{r_0}) = 0 \right\} 
     = 
    2 A(0) \cdot
    \left( 1 - M^{-r_0} \right)^n, 
\end{align*}
where the last equality holds because $n(\vec{\bm{1}}_{r_0}) \sim \text{Binomial}(n, M^{-r_0})$. 
On the other hand,
\begin{align*}
    \varepsilon(r,n) 
    & \ge 
    \sum_{k=0}^{r_0-1} A(k) \cdot \E_n\left[ 
    \I(n(\vec{\bm{1}}_{k}) > 0, n(\vec{\bm{1}}_{k+1}) = 0)
    \right] \\
    & 
    \ge 
    A(r_0-1) \cdot
    \sum_{k=0}^{r_0-1} \E_n\left[ 
    \I(n(\vec{\bm{1}}_{k}) > 0, n(\vec{\bm{1}}_{k+1}) = 0)
    \right] \\
    & = 
    A(r_0-1) \cdot \left( 1 - M^{-r_0} \right)^n. 
\end{align*}
Consequently, when $\liminf_{n\rightarrow \infty} r_n \ge r_0$, 
\begin{align}\label{eq:asymp_rate_binary_zero_tau2_hard_threshold}
    \varepsilon(r_n, n) \asymp \left( 1 - M^{-r_0} \right)^n. 
\end{align}

\section{Proof of Theorem~\ref{th:disc}}
The proof strategy is the same as that for proving Theorem~\ref{th:cont}, except the estimation error instead satisfies $\varepsilon(r,n) \asymp \alpha^r/n$ for some $\alpha>1$. 
\begin{itemize}
	\item[(i)] (Hard Thresholding). 
	First, for any sequence  $\{r_n\}$, 
	\begin{align*}
	\liminf_{n\rightarrow \infty}
	\frac{A(r_n) + \alpha^{r_n}/n}{1/n}>0.  
	\end{align*}
	If this inequality does not hold, then there exists a subsequence $\{r_{n_k}\}$ such that
	\begin{align}\label{eq:proof_exp_opt_hard}
	\lim_{k\rightarrow \infty}
	\frac{A(r_{n_k})}{1/n_k}=0 \ 
	\text{ and }  \ 
	\lim_{k\rightarrow \infty}
	\frac{ \alpha^{r_{n_k}}/n_k}{1/n_k}= \lim_{k\rightarrow \infty}
	\alpha^{r_{n_k}} =0. 
	\end{align}
	In \eqref{eq:proof_exp_opt_hard}, the left-hand formula implies that $\liminf_{k\rightarrow \infty}r_{n_k}\geq r_0,$ 
	which contradicts the right-hand formula.
	Second, we can verify that 
	that $A(r_n) + \varepsilon(r_n,n) \asymp 1/n$ if and only if 
	$r_n = O(1)$ and $\liminf_{n\rightarrow \infty} r_n \geq r_0$. 
	
	\item[(ii)] (Exponential Decay).  
	First,  for any sequence $\left\{r_n=\frac{\log(n)+\log(a_n)}{\log(\alpha)+\xi}\right\}$,
	\begin{align}\label{eq:proof_exp_opt_expo_goal}
	\liminf_{n\rightarrow \infty}
	\frac{e^{-\xi r_n} + \alpha^{r_n}/n}{n^{-\xi/(\log(\alpha)+\xi)}}
	=
	\liminf_{n\rightarrow \infty} 
	\left\{
	a_n^{-\xi/\{\log(\alpha)+\xi\}} + a_n^{\log(\alpha)/\{\log(\alpha)+\xi\}}
	\right\}
	>0.
	\end{align} 
	If this inequality is false,  then there must exist a subsequence $\{a_{n_k}\}$ such that
	$
	\lim_{k\rightarrow \infty} a_{n_k}^{-1} = 0
	$
	and 
	$
	\lim_{k\rightarrow \infty} a_{n_k} = 0,
	$
	a clear contradiction.

	Second, by the equality in  \eqref{eq:proof_exp_opt_expo_goal}, $A(r_n) + \varepsilon(r_n,n) \asymp n^{-\xi/(\log(\alpha)+\xi)}$ if and only if $\{r_n=\{\log(n)+\log(a_n)\}/\{\log(\alpha)+\xi\}\}$ satisfies 
	$a_n\asymp 1$.

	\item[(iii)] (Polynomial Decay). 
	First, for any sequence $\{r_n = a_n \log(n) \}$, 
	\begin{align}\label{eq:proof_exp_opt_poly_goal}
	\liminf_{n\rightarrow \infty}
	\frac{r_n^{-\xi} + \alpha^{r_n}/n}{\log^{-\xi}(n)}
	=
	\liminf_{n\rightarrow \infty} 
	\left(
	a_n^{-\xi} + \frac{\log^\xi(n)}{n^{1-a_n\log(\alpha)}}
	\right)
	>0.  
	\end{align}
	If \eqref{eq:proof_exp_opt_poly_goal} does not hold, then there must exist a subsequence $\{a_{n_k}\}$ such that
	\begin{align*}
	\lim_{k\rightarrow\infty}a_{n_k}^{-1} = 0 \ 
	\text{ and } \ 
	\lim_{k\rightarrow \infty} 
	\frac{\log^\xi(n_k)}{n_k^{1-a_{n_k}\log(\alpha)}} = 0,
	\end{align*}
	implying that (i) $\lim_{k\rightarrow\infty}a_{n_k} = \infty$ and (ii) $\limsup_{k\rightarrow\infty}a_{n_k} \leq 1/\log(\alpha)$, a contradiction.

	Second, by the equality in \eqref{eq:proof_exp_opt_poly_goal},  $A(r_n)+\varepsilon(r_n,n) \asymp \log^{-\xi}(n)$ if and only if 
	\begin{align}\label{eq:dist_poly_proof}
		a_n^{-1} = O(1), \quad \frac{\log^\xi (n)}{ n^{1-a_n \log(\alpha)}}=O(1).
	\end{align}
	The second condition in \eqref{eq:dist_poly_proof} implies that $a_n = O(1)$. 
	Thus, \eqref{eq:dist_poly_proof} is the same as  
	\begin{align*}
	a_n \asymp 1, \quad \frac{\log^\xi (n)}{ n^{1-a_n \log(\alpha)}}=O(1).
	\end{align*}
	
	\item[(iv)] (Logarithmic Decay). 
	First,  for any sequence $\{r_n = a_n \log(n)\}$, 
	\begin{align}\label{eq:proof_poly_opt_log_goal}
	\liminf_{n\rightarrow \infty}\frac{\log^{-\xi}(r_n) + \alpha^{r_n}/n}{[\log\log(n)]^{-\xi}} & = 
	\liminf_{n\rightarrow \infty}
	\left\{
	\left[
	\frac{\log(a_n)}{\log\log(n)} + 1
	\right]^{-\xi}
	+ \frac{[\log\log(n)]^\xi}{n^{1-a_n\log(\alpha)}}
	\right\} >0. 
	\end{align}
	If \eqref{eq:proof_poly_opt_log_goal} does not hold, then there must exist a subsequence $\{a_{n_k}\}$ such that
	\begin{align*}
	\lim_{k\rightarrow \infty} \left[
	\frac{\log(a_{n_k})}{\log\log(n_k)} + 1
	\right]^{-1} = 0 \ 
	\text{ and } \ 
	\lim_{k\rightarrow \infty}\frac{[\log\log(n_k)]^\xi}{n_k^{1-a_{n_k}\log(\alpha)}}=0, 
	\end{align*}
	which imply respectively that (i) $\lim_{k\rightarrow\infty}a_{n_k} = \infty$ and (ii) $\limsup_{k\rightarrow\infty}a_{n_k} \leq 1/\log(\alpha)$, a contradiction.

	Second, by the equality in 
	\eqref{eq:proof_poly_opt_log_goal}, $A(r_n) + \varepsilon(r_n,n) \asymp [\log\log(n)]^{-\xi}$ if and only if 
	\begin{align*}
	\liminf_{n\rightarrow \infty} \frac{\log(a_n)}{\log\log(n)} >-1, \quad \frac{[\log\log(n)]^\xi}{n^{1-a_n \log(\alpha)}} =  O( 1 ).
	\end{align*}
\end{itemize}

\section{Proof of Theorem \ref{thm:binary_zero_tau}}
\begin{itemize}
	\item[(i)] (Hard Thresholding). 
	First, for any sequence  $\{r_n\}$, 
	\begin{align}\label{eq:proof_exp_opt_hard_goal_binary_tau2_zero}
	\liminf_{n\rightarrow \infty}
	\frac{A(r_n) + \varepsilon(r_n,n)}{(1-M^{-r_0})^n} = 
	\liminf_{n\rightarrow \infty}
	\left\{
	\frac{A(r_n)}{(1-M^{-r_0})^n} + \frac{\varepsilon(r_n,n)}{(1-M^{-r_0})^n}
	\right\}
	>0.  
	\end{align}
	If \eqref{eq:proof_exp_opt_hard_goal_binary_tau2_zero} does not hold, then there exists a subsequence $\{r_{n_k}\}$ such that as $k \rightarrow \infty$, 
	\begin{align}\label{eq:proof_binary_zero_tau2_hard_threshold}
	    \frac{A(r_{n_k})}{(1-M^{-r_0})^{n_k}} \rightarrow 0,
	    \ \ \ 
	    \text{and}
	    \ \ \ 
	    \frac{\varepsilon(r_{n_k}, n_k)}{(1-M^{-r_0})^{n_k}} \rightarrow 0.
	\end{align}
	Note that the left-hand condition in \eqref{eq:proof_binary_zero_tau2_hard_threshold} implies that $\liminf_{k\rightarrow \infty} r_{n_k} \ge r_0$. 
	From \eqref{eq:asymp_rate_binary_zero_tau2_hard_threshold}, this further implies that $\varepsilon(r_{n_k}, n_k) \asymp (1-M^{-r_0})^{n_k}$, which contradicts the right-hand condition in \eqref{eq:proof_binary_zero_tau2_hard_threshold}.

	Second, from \eqref{eq:proof_exp_opt_hard_goal_binary_tau2_zero}, we can verify that 
	that $A(r_n) + \varepsilon(r_n,n) \asymp (1-M^{-r_0})^n$ if and only if 
	$\liminf_{n\rightarrow \infty} r_n \ge r_0$.

	\item[(ii)] (Exponential Decay).
	\begin{itemize}
		\item[(1)] When $e^{-\xi} < 1/M$,  for any sequence $\{r_n\}$, we have 
		\begin{align*}
		\liminf_{n\rightarrow \infty}
		\frac{A(r_n) + n^{-1}}{n^{-1}} = 
		\liminf_{n\rightarrow \infty}
		\left\{
		nA(r_n)+ 1
		\right\}
		>0.  
		\end{align*}
		We can then verify that $A(r_n) + \overline{\varepsilon}(r_n,n) \asymp n^{-1}$ if and only if 
		$n e^{-\xi r_n} = O(1)$. 
		
		\item[(2)] When $e^{-\xi} = 1/M$, $A(r) + \overline{\varepsilon}(r,n) \asymp M^{-r} + r/n$. 
		By the same logic as for proving Theorem \ref{th:cont}(ii) with $\xi  = \log(M)$ and $\alpha = 1$, 
		we can show that the optimal rate for $\overline{L}_n$ is $\overline{L}_n \asymp n^{-1} \log(n)$, and it is achieved if and only if 
		$
		r_n = a_n \log(n)
		$
		with $a_n$ satisfying $a_n \asymp 1$ and $n^{1-a_n\log(M)}/\log(n) = O(1)$. 
		
		\item[(3)] When $e^{-\xi} > 1/M$, for any sequence $\{r_n = a_n \log(n) \}$, 
		\begin{align}\label{eq:binary_zero_exp_ratio}
			\liminf_{n\rightarrow \infty}  
			\frac{e^{-\xi r_n} + e^{-\xi r_n} M^{r_n}/n }{n^{-\xi/\log(M)}} 
			& = 
			\liminf_{n\rightarrow \infty}  
			\left\{
			n^{-\xi (a_n - \log^{-1}(M))} + n^{(\log(M) - \xi) (a_n - \log^{-1}(M) )}
			\right\} 
			\nonumber
			\\
			& > 0. 
		\end{align}
		If \eqref{eq:binary_zero_exp_ratio} does not hold, then there exist a subsequence $\{a_{n_k}\}$ such that \\
		$
		n^{-( a_{n_k} - \log^{-1}(M))} = o(1)
		$
		and 
		$
		n^{a_{n_k} - \log^{-1}(M)} = o(1),
		$
		a contradiction. 
		
		Second, by the equality in \eqref{eq:binary_zero_exp_ratio}, 
		$A(r_n)+ \overline{\varepsilon}(r_n,n) \asymp n^{-\xi/\log(M)}$ if and only if 
		$
		n^{a_n - \log^{-1}(M)}  \asymp 1,
		$
		which is further equivalent to $
		n^{a_n\log(M) - 1}  \asymp 1.
		$
	\end{itemize}

	\item[(iii)] (Polynomial Decay). 
	First, for any sequence $\{r_n = a_n \log(n) \}$,
	\begin{align}\label{eq:binary_zero_poly_ratio}
	\liminf_{n\rightarrow \infty}
	\frac{r_n^{-\xi} + r_n^{-\xi} M^{r_n}/n}{\log^{-\xi}(n)}
	& =
	\liminf_{n\rightarrow \infty}
	\left\{
	a_n^{-\xi} + a_n^{-\xi} n^{a_n \log(M) - 1}
	\right\}
	>0.  
	\end{align}
	If \eqref{eq:binary_zero_poly_ratio} does not hold, then there exists a subsequence $\{a_{n_k}\}$ such that
	\begin{align*}
	\lim_{k\rightarrow\infty}a_{n_k}^{-1} = 0 \ 
	\text{ and } \ 
	\lim_{k\rightarrow \infty} 
	a_{n_k}^{-\xi} n_k^{a_{n_k} \log(M) - 1} = 0
	\end{align*}
	which implies that (i) $\lim_{k\rightarrow\infty}a_{n_k} = \infty$ and (ii) $a_{n_k} = O(1)$, a contradiction.   
	
	Second, by the equality in \eqref{eq:binary_zero_poly_ratio},  $A(r_n)+ \overline{\varepsilon}(r_n,n) \asymp \log^{-\xi}(n)$ if and only if 
	\begin{align}\label{eq:binary_zero_poly_cond}
		a_n^{-1} = O(1), \quad a_n^{-\xi} n^{a_n \log(M) - 1}=O(1).
	\end{align}
	Note that the right-hand condition in \eqref{eq:binary_zero_poly_cond} implies that $a_n = O(1)$. Thus, \eqref{eq:binary_zero_poly_cond} is also equivalent to 
	\begin{align*}
	a_n \asymp 1, \quad n^{a_n \log(M) - 1}=O(1).
	\end{align*}
	
	\item[(iv)] (Logarithmic Decay). 
	First,  for any sequence $\{r_n = a_n \log(n)\}$, 
	\begin{align}\label{eq:proof_poly_opt_log_goal_no_intrin}
	& \quad \ \liminf_{n\rightarrow \infty}\frac{\log^{-\xi}(r_n) + \log^{-\xi}(r_n) M^{r_n}/n }{[\log\log(n)]^{-\xi}} 
	\nonumber
	\\
	& = 
	\liminf_{n\rightarrow \infty}
	\left\{
	\left(
	\frac{\log(a_n)}{\log\log(n)} + 1
	\right)^{-\xi}
	+ 
	\left(
	\frac{\log(a_n)}{\log\log(n)} + 1
	\right)^{-\xi}
	n^{a_n \log(M) - 1}
	\right\} >0.
	\end{align}
	If \eqref{eq:proof_poly_opt_log_goal_no_intrin} does not hold, then there exists a subsequence $\{a_{n_k}\}$ such that
	\begin{align*}
	\lim_{k\rightarrow \infty} \left(
	\frac{\log(a_{n_k})}{\log\log(n_k)} + 1
	\right)^{-1} = 0 \ 
	\text{ and } \ 
	\lim_{k\rightarrow \infty}
	\frac{
	n_k^{a_{n_k} \log(M) - 1}
	}
	{
	\left(
	\frac{\log(a_{n_k})}{\log\log(n_k)} + 1
	\right)^{\xi}
	}	
	=0, 
	\end{align*}
	which implies that (i) $\lim_{k\rightarrow\infty}a_{n_k} = \infty$ and (ii) $a_{n_k} = O(1)$, a contradiction.   
	
	Second, by the equality in 
	\eqref{eq:proof_poly_opt_log_goal_no_intrin}, $A(r_n) + \overline{\varepsilon}(r_n,n) \asymp [\log\log(n)]^{-\xi}$ if and only if 
	\begin{align}\label{eq:binary_zero_log_cond}
	\liminf_{n\rightarrow \infty} \frac{\log(a_n)}{\log\log(n)} >-1, 
	\ \ 
	\text{and}
	\ \ 
	\left(
	\frac{\log(a_n)}{\log\log(n)} + 1
	\right)^{-\xi}
	n^{a_n \log(M) - 1}
	=  O( 1 ).
	\end{align}
	The right-hand condition in \eqref{eq:binary_zero_log_cond} implies that $a_n = O(1)$, which in turn implies that $\limsup_{n\rightarrow \infty}\frac{\log(a_n)}{\log\log(n)} \le 0$. 
	Thus, \eqref{eq:binary_zero_log_cond} is also equivalent to 
	\begin{align*}
	\liminf_{n\rightarrow \infty} \frac{\log(a_n)}{\log\log(n)} >-1, 
	\ \ 
	\text{and}
	\ \ 
	n^{a_n \log(M) - 1}
	=  O( 1 ).
	\end{align*}
\end{itemize}

\section{Proof of Theorem \ref{thm:lower_bound_binary_tau2_zero}}

First, for the exponential decay case, we have for any sequence $\{r_n\}$, 
    \begin{align*}
			\liminf_{n\rightarrow \infty}  
			\frac{e^{-\xi r_n} +\varepsilon(r_n,n) }{n^{-\xi/\log(M)}} 
			& = 
			\liminf_{n\rightarrow \infty}  
			\left\{
			\left( \frac{M^{r_n}}{n} \right)^{-\xi/\log(M)} 
			+ 
			\frac{\varepsilon(r_n, n)}{n^{-\xi/\log(M)}}
			\right\} 
			> 0. 
	\end{align*}
If this inequality is false, 
then there exists a subsequence $\{r_{n_k}\}$ such that 
as $k \rightarrow \infty$, 
$M^{r_{n_k}}/n_k \rightarrow \infty$ and $\varepsilon(r_{n_k}, n_k)/n_k^{-\xi/\log(M)} \rightarrow 0$. 
From Lemma \ref{lemma:lowerbound_large_r}, 
there exists a countably infinte set $S$ and a subsequence $\{\tilde{r}_{n}: n \in S\}$ such that $\lim_{n \in S, n \rightarrow \infty} M^{\tilde{r}_{n}}/n = \tilde{c}$ for some constant $\tilde{c}>0$ and  
$\liminf_{n \in S, n \rightarrow \infty} \varepsilon(r_{n}, n) / A(\tilde{r}_{n}) > 0$. 
These imply that as $n \in {S}$ goes to infinity, we have
$$
\frac{A(\tilde{r}_{n})}{n^{-\xi/\log(M)}} = \frac{A(\tilde{r}_{n})}{\varepsilon(r_{n}, n)} \cdot \frac{\varepsilon(r_{n}, n)}{n^{-\xi/\log(M)}} \rightarrow 0,
$$
but contradictorily also
\begin{align*}
    \frac{ A(\tilde{r}_{n}) }{ n^{-\xi/\log(M)} }
    & \asymp
    \frac{e^{-\xi \tilde{r}_{n}}}{ n^{-\xi/\log(M)} }
    = 
    \frac{M^{-\xi \tilde{r}_{n}/\log(M)}}{ n^{-\xi/\log(M)} }
    = 
    \left( \frac{M^{\tilde{r}_{n}}}{n} \right)^{-\xi/\log(M)}
    \rightarrow 
    \tilde{c}^{-\xi/\log(M)} > 0.
\end{align*}

For the polynomial decay case, we have for any sequence $\{r_n\}$, 
\begin{align*}
	\liminf_{n\rightarrow \infty}
	\frac{r_n^{-\xi} + \varepsilon(r_n, n)}{\log^{-\xi}(n)}
	& =
	\liminf_{n\rightarrow \infty}
	\left\{
	\left( \frac{r_n}{\log(n)} \right)^{-\xi}
	+ 
	\frac{\varepsilon(r_n, n)}{\log^{-\xi}(n)}
	\right\}
	>0.  
\end{align*}
If this inequality is false, then there exists a subsequence $\{r_{n_k}\}$ such that as $k \rightarrow \infty$, 
$r_{n_k}/\log(n_k) \rightarrow \infty$, 
and 
$\varepsilon(r_{n_k}, n_k)/\log^{-\xi}(n_k) \rightarrow 0$. 
This implies that as $k \rightarrow \infty$, 
\begin{align*}
    \frac{M^{r_{n_k}} }{n_k}
    & = \frac{\exp\{[r_{n_k}/\log(n_k)] \cdot \log(n_k) \cdot \log(M)\}}{n_k}
    =
    n_k^{[r_{n_k}/\log(n_k)]\log(M) - 1} \rightarrow \infty.
\end{align*}
From Lemma \ref{lemma:lowerbound_large_r}, 
there exist a countably infinite set $S$ and a subsequence $\{\tilde{r}_{n}: n \in S\}$ 
such that $\lim_{n \in S, n \rightarrow \infty} M^{\tilde{r}_{n}}/n = \tilde{c}$ for some constant $\tilde{c}>0$ and  
$\liminf_{n \in S, n \rightarrow \infty} \varepsilon(r_{n}, n) / A(\tilde{r}_{n}) > 0$. 
These imply that as $n \in S$ goes to infinity, we have 
$$
 \frac{A(\tilde{r}_{n})}{\log^{-\xi}(n)}
    = 
    \frac{A(\tilde{r}_{n})}{\varepsilon(r_{n}, n)} \cdot \frac{\varepsilon(r_{n}, n)}{\log^{-\xi}(n)}
    \rightarrow 0,
$$
but contradictorily also
\begin{align*}
    \frac{A(\tilde{r}_{n})}{\log^{-\xi}(n)}
    & \asymp
    \left( \frac{\tilde{r}_{n}}{\log(n)} \right)^{-\xi}=
    \left\{
    \left[\frac{\log(M^{\tilde{r}_n}/n)}{\log (n)} + 1 \right] 
    \frac{1}{\log (M)}
    \right\}^{-\xi}
    \rightarrow 
    \log^{\xi} (M),
\end{align*}
where the last convergence holds because $\lim_{n \in S, n \rightarrow \infty} M^{\tilde{r}_{n}}/n = \tilde{c}$.

For the logarithmic decay case, 
we have for any sequence $\{r_n\}$,
	\begin{align*}
	\liminf_{n\rightarrow \infty}\frac{\log^{-\xi}(r_n) + \varepsilon(r_n, n) }{[\log\log(n)]^{-\xi}} 
	= 
	\liminf_{n\rightarrow \infty}
	\left\{
	\left( \frac{\log(r_n)}{\log\log(n)} \right)^{-\xi}
	+ \frac{\varepsilon(r_n, n)}{[\log\log(n)]^{-\xi}}
	\right\} > 0. 
	\end{align*}
	If this inequality is false, then there exists a subsequence $\{r_{n_k}\}$ such that as $k \rightarrow \infty$, 
	$\log(r_{n_k})/\log\log(n_k) \rightarrow \infty$ and 
	$\varepsilon(r_{n_k}, n_k)/\{\log\log(n_k)\}^{-\xi} \rightarrow 0$. 
	These further imply that as $k\rightarrow \infty$, 
	\begin{align*}
	    \frac{M^{r_{n_k}}}{n_k}
	    & = 
	    n_k^{[r_{n_k}/\log(n_k)]\log(M) - 1} 
	    = 
	    n_k^{\log(M) \cdot \exp \left[ \log\log(n_k) \cdot 
	\left\{
	\frac{\log(r_{n_k})}{\log\log(n_k)} -  1
	\right\} \right] - 1}
	    \rightarrow \infty. 
	\end{align*}
    From Lemma \ref{lemma:lowerbound_large_r}, 
    there exist a countably infinite set $S$
    and
    a subsequence $\{\tilde{r}_{n}\}$ such that $\lim_{n \in S, n \rightarrow \infty} M^{\tilde{r}_{n}}/n = \tilde{c}$ for some finite constant $\tilde{c}>0$ and  
    $\liminf_{n \in S, n \rightarrow \infty} \varepsilon(r_{n}, n) / A(\tilde{r}_{n}) > 0$. 
    These imply that as $n\in S$ goes to infinity,  we have 
    \begin{align*}
        \frac{A(\tilde{r}_{n})}{[\log\log(n)]^{-\xi}}
        = 
        \frac{A(\tilde{r}_{n})}{\varepsilon(r_{n}, n)} \frac{\varepsilon(r_{n}, n)}{[\log\log(n)]^{-\xi}}
        \rightarrow 0;
    \end{align*}
    but also contradictorily 
    \begin{align*}
        \frac{A(\tilde{r}_{n})}{[\log\log(n)]^{-\xi}}
        & \asymp
        \left[ \frac{\log(\tilde{r}_{n})}{\log\log(n)} \right]^{-\xi}
        \\
        & = 
        \left\{ \log\left[ \left( \frac{\log(M^{\tilde{r}_n}/n)}{\log(n)} + 1 \right) \frac{1}{\log (M)} \right]/\log\log(n) + 1 \right\}^{-\xi}
        \rightarrow 1,
    \end{align*}
    where the last convergence holds because $\lim_{n \in S, n \rightarrow \infty} M^{\tilde{r}_{n}}/n = \tilde{c}$.

\section{Explore the Practicality of the MR Methods}\label{sec:practical}

\subsection{Cross-validation}\label{sec:cross}
We want to choose the resolution level $R$ such that the estimated prediction function, $g(\vec{\bm{x}}_{R};\hat{\bm{\theta}}_{R})$, has the smallest prediction error \eqref{eq:decomposition}, or the smallest prediction error \eqref{eq:decomposition_average_training} averaging over the training set. 
A usual strategy is to first estimate the prediction error at each resolution $r$, 
and then estimate the optimal resolution by the $r$-value that minimizes the estimated prediction error. 
To avoid over-fitting, we typically split the training set randomly into two parts: 
one for estimating the prediction function and the other for estimating the prediction error. 
Here we adopt the leave-one-out cross-validation approach;  see, e.g., \citet{Stone1978} for a review. 
Specifically, 
at each resolution $r$, for $1\leq j\leq n$, we use the $n-1$ samples $\{y_i, \vec{\bm{x}}_{ir} \}_{i\neq j}$ to obtain an estimator $\hat{\bm{\theta}}_{r,j}$ of the parameter $\bm{\theta}_r^*$. 
The corresponding prediction function is then  
$
g(\vec{\bm{x}}_{r};\hat{\bm{\theta}}_{r,j}),
$
and the prediction of the $j$th sample's response is thus $\hat{y}_j \equiv g(\vec{\bm{x}}_{jr}; \hat{\bm{\theta}}_{r,j})$. 
Importantly, the prediction for the $j$th sample depends only on the other samples excluding itself. 
We can then estimate the prediction error at resolution $r$ by the average prediction error for all $n$ samples from the cross-validation: 
\begin{align}\label{eq:CV}
\CV_n(r) = \frac{1}{n} \sum_{j=1}^{n} \loss_{\odot} \left( y_j, \hat{y}_j \right),
\end{align}
where the subscript $n$ emphasizes the dependence on the sample size. 
Let $\PE_n(r) \equiv \E_n \E[\loss_{\odot}( Y, g(\vec{\bm{X}}_{R}; \hat{\bm{\theta}}_R)]$ denote 
the prediction error \eqref{eq:decomposition_average_training} averaging over the training set of size $n$. 
We can show that the cross-validation error $\CV_n(r)$ in \eqref{eq:CV} at resolution $r$ with sample size $n$ is an unbiased estimator for the prediction error \eqref{eq:decomposition_average_training} at resolution $r$ but with sample size $n-1$, i.e., $\E_n[\CV_n(r)] = \PE_{n-1}(r)$. 
When $r$ is not too close to $n$, we can expect the prediction errors \eqref{eq:decomposition_average_training} to have similar values at sample sizes $n$ and $n-1$, i.e., $\PE_{n}(r) \approx \PE_{n-1}(r)$;  
hence $\CV_n(r)$ can serve as a good estimator for the prediction error at resolution $r$. 

To estimate the optimal resolution, 
we can use the resolution level $\hat{R}_n$ that minimizes the cross-validation error $\CV_n(r)$ over a reasonable range of $r$, say from $0$ to some $\overline{r}_n>0$. 
In general, when we believe the intrinsic variance is not zero,  we do not want $\overline{r}_n$ to be too large compared to the sample size $n$. 
First, when the resolution $r$ is too large, the estimator for $\bm{\theta}_r^*$ from 
minimizing the empirical risk
may not be unique.
That is, there may be multiple minimizers for the empirical loss 
$
n^{-1}
\sum_{i=1}^{n}
\loss_{\odot}\left(y_i, g(\vec{\bm{x}}_{ir};\bm{\theta}_r) \right), 
$
and thus the estimated prediction function itself has some variability. 
Second, as we discussed before, $\CV_n(r)$ is essentially estimating the prediction performance at sample size $n-1$, which can be quite different from that at sample size $n$ when the resolution level $r$ becomes close to $n$.  
Third, and more importantly, 
as discussed in Sections \ref{sec:theory_linear} and \ref{sec:theory_tree}, when there is positive intrinsic variance, 
a necessary condition for 
$\varepsilon(r, n) = o(1)$ is often that $\dim(\bm{\theta}_r)/n = o(1)$, which generally requires that $r = o(n)$. 

However, 
in practice with a finite sample size $n$, these rate results often provide only a rough idea of 
the choice of $\overline{r}_n$. 
Besides, as we demonstrated in Sections~\ref{sec:linear_zero_tau} and \ref{sec:determin_categorial}, when there is no intrinsic error, it is possible that the optimal resolution can be of the same size of $n$ and even close to $n$ up to certain constant. 
Therefore, in the following simulation, we choose $\overline{r}_n$ to be almost the largest $r$-value where the empirical risk minimizer exists.
It turns out cross-validation is fairly robust even when we search over a large range of possible resolutions. 
Nevertheless,
obtaining general finite-sample theoretical properties of the estimated optimal resolution $\hat{R}_n$ as well as the corresponding prediction performance is very challenging, especially when $\overline{r}_n$ is relatively or even moderately large compared to $n$.  
Therefore, below we report a simulation study based on a special case of linear models as in Section \ref{sec:linear_model}, as a first step to understand the finite-sample
properties of the estimated optimal resolution $\hat{R}_n$. 
All theoretical derivations are collected at the end.

\subsection{Normal linear model with infinitely many continuous covariates}\label{sec:simu_example}
We consider again 
model \eqref{eq:linear_model}, 
where $Y$ given $\vec{\bm{X}}_{\infty}$ follows a linear model. 
At each resolution $r$, 
we regress the responses $y_i$'s on the covariates $\vec{\bm{x}}_{ir}$'s in the training set to ascertain the least squares coefficient $\hat{\bm{\theta}}_r$, and then use $g(\vec{\bm{x}}_{r}, \hat{\bm{\theta}}_r)$ as our prediction. 
Because the prediction function is linear in covariates, 
the prediction performance at all resolution levels is invariant under a Gram--Schmidt orthogonalization of the original covariates. 
Therefore, without loss of generality, we further assume the covariates $X_1, X_2, \ldots, $ are i.i.d. realizations of $\mathcal{N}(0,1)$. 
The ultimate risk is still $\tau^2$, 
the resolution bias reduces to $
\sum_{k=r+1}^\infty \beta_k^2,
$
and the estimation error simplifies to 
$
\|\hat{\bm{\theta}}_r-\bm{\theta}^*_r\|_2^2
$
with 
$
\bm{\theta}^*_r = (\beta_0, \beta_1, \ldots, \beta_r)^\top. 
$
Moreover, 
from Appendices \ref{sec:simu_example} and \ref{sec:proof_linear}, for the linear model with i.i.d.\ standard normal covariates, the prediction error 
$\PE_n(r)$ as in \eqref{eq:decomposition_average_training} and \eqref{eq:pred_loss_linear} averaged over the training set of size $n$ simplifies to
\begin{align}\label{eq:pred_error_linear}
	\PE_n(r)
	= \tau^2 + A(r) + 
	\frac{A(r)+\tau^2}{n-r-2}\left(
	\frac{n-2}{n}+r
	\right)
	= \left[
	\tau^2 + A(r)
	\right]
	\frac{(n+1)(n-2)}{n(n-r-2)}. 
\end{align}
Below we consider three strategies to estimate the prediction error at each resolution. 

We will use $\tilde{\bm{Y}} = (y_1, y_2, \ldots, y_n)^\top \in \mathbb{R}^n$
and 
$\tilde{\bm{X}}_{r} = (\bm{x}_{1\vec{r}}, \bm{x}_{2\vec{r}}, \ldots, \bm{x}_{n\vec{r}} )^\top \in \mathbb{R}^{n\times (r+1)}$
to denote the response vector and covariate matrix for the training set, 
and 
$\bm{H}_r = \tilde{\bm{X}}_{r} (
\tilde{\bm{X}}_{r}^\top \tilde{\bm{X}}_{r}
)^{-1} \tilde{\bm{X}}_{r}^\top \in \mathbb{R}^{n\times n}$ to denote the projection matrix onto the column space of $\tilde{\bm{X}}_{r}$. 
Then at each resolution $r$, the least squares coefficient is 
$
\hat{\bm{\theta}}_{r} =
( \tilde{\bm{X}}_{r}^\top \tilde{\bm{X}}_{r})^{-1} \tilde{\bm{X}}_{r}^\top \tilde{\bm{Y}}, 
$
and the mean squared error is 
$
\hat{\sigma}^2_r = n^{-1}\| \tilde{\bm{Y}} - \tilde{\bm{X}}_{r}\hat{\bm{\theta}}_{r} \|_2^2. 
$

First, we consider the leave-one-out cross validation in Section \ref{sec:cross}. For our linear model,
$\CV_n(r)$ has the following equivalent form that is much easier to compute: 
\begin{align}\label{eq:CV_linear}
\CV_n(r) & 
= \frac{1}{n} \sum_{i=1}^{n}
\left(
\frac{y_i - [\bm{H}_r \tilde{\bm{Y}}]_i } {1 - [\bm{H}_r]_{ii}}
\right)^2, 
\end{align}
where 
$[\bm{H}_r \tilde{\bm{Y}}]_i$ is the $i$th coordinate of $\bm{H}_r \tilde{\bm{Y}}$, 
and 
$[\bm{H}_r]_{ii}$ is the $i$th diagonal element of $\bm{H}_r$. 
Second, we consider a robust version of the model selection criterion in the style of AIC for M-estimation \citep{Tharmaratnam2013}. 
In this linear model case,  
the information-criterion type estimator for the prediction error at resolution $r$ is 
\begin{align}\label{eq:IC}
\IC_n(r) 
= 
\hat{\sigma}_r^2 \cdot \frac{n+2(r+1)}{n}
= \hat{\sigma}^2_r + 2 \hat{\sigma}^2_r \frac{r+1}{n}, 
\end{align} 
where the second term serves as a penalty for the resolution level or equivalently the number of unknown parameters. 
Third, we use an unbiased estimator for the prediction error \eqref{eq:pred_error_linear}, which has 
the following form: 
\begin{align}\label{eq:UE}
\UE_n(r)  
= \hat{\sigma}^2_r \cdot
\frac{(n-2)(n+1)}{(n-r-2)(n-r-1)} = 
\hat{\sigma}^2_r + 
\hat{\sigma}^2_r \left[
\frac{(n-2)(n+1)}{(n-r-2)(n-r-1)} - 1
\right].
\end{align}
Again, we can view the second term in $\UE_n(r)$ as a penalty for the resolution level $r$. 
However, when $r$ is large and even becomes close to $n$, compared to $\IC_n(r)$ in \eqref{eq:IC}, the penalty in $\UE_n(r)  $ in \eqref{eq:UE} is much larger. Indeed, as we will show in  Section~\ref{sec:simulation_tech}, whereas the bias in $\CV_n(r)$ for estimating the prediction error $\PE_n(r)$ is rather controllable, $\IC_n(r)$ can dramatically underestimate $\PE_n(r)$ when $r$ is close to $n$.

For the following simulations, we generate the training set as i.i.d.\  samples from
\begin{align*}
& Y = \beta_0 + \sum_{j=1} \beta_j X_j + \varepsilon, 
\quad
X_1, X_2, \ldots \overset{i.i.d.}{\sim} \mathcal{N}(0,1), \quad 
\varepsilon \sim 
\mathcal{N}(0, \tau^2), 
\quad 
\vec{\bm{X}}_{\infty} \ind \varepsilon. 
\end{align*}
We consider three choices for the $\beta_j$'s, which correspond to exponential, polynomial and logarithmic decay resolution biases $A(\cdot)$'s:
$\beta_0 = 0$, and for $j\ge 1$,
\begin{align*}
\beta_j = 
\begin{cases}
\sqrt{e^{-(j-1)} - e^{-j}}, & \text{for exponential decay } A(\cdot),\\
\sqrt{1/j - 1/(j+1)}, & \text{for polynomial decay } A(\cdot),\\
\sqrt{\log(2)}\sqrt{1/\log(j+1) - 1/\log(j+2)}, & \text{for logarithmic decay } A(\cdot).
\end{cases}
\end{align*}

\subsection{Empirical Findings}
We begin by setting $\tau^2 = 0.5$ and $n=50$.  Figure \ref{fig:estimate_pred_loss} displays the comparisons, where each sub-figure plots the logarithm of the average value of the estimated prediction error over 500 simulated training sets, as well as the logarithm of the true average prediction error, against the resolution level $r$. The top row corresponds to $\tau^2>0$, which is the focus of this sub-section; the bottom row is for the setting where $\tau^2=0$, to be discussed 
shortly. 
When $r$ is small, all three estimators are approximately unbiased, but with $\IC_n(\cdot)$ deteriorating very quickly when $r$ becomes moderate.
When $r$ approaches $n$, $\CV_n(\cdot)$ overestimates the prediction error. All these numerical results are consistent with the theoretical calculations in Section~\ref{sec:simulation_tech}.

\begin{table}[h]
    \centering
	\caption{Estimated resolution and prediction error when $n = 50$, with $\tau^2=\frac{1}{2}$.}\label{tab:tau2_half_n50}
	\begin{tabular}{cccccc}
		\toprule
		Type / $r_{\opt}$ / $\PE_n(r_{\opt})$ & Method & $\hat{R}$ & $95\%$ QR & std. $\PE_n(\hat{R})$ &  $95\%$ QR  \\
		\midrule
		Exponential & Oracle & -- & -- & $1.00$ & $[0.92, \  1.18]$ \\
		$r_{\opt} = 4$ & CV & $6$ & $[2, \  20]$ & $1.59 $ & $[0.93, \  1.89]$ \\
		$\PE_n(r_{\opt}) = 0.5767$ & UE & $7$ & $[2, \ 47]$ &  $2.33$ & $[0.92, \ 10.86]$ \\
		&  IC & $47$ & $[46, \ 47]$  & $37.89$ & $[4.59 , \ 172.44]$\\
		\midrule
		Polynomial & Oracle & -- & -- & $1.00$ & $[0.87, \  1.24]$ \\
		$r_{\opt} = 7$ & CV & $10$ & $[2, \  44]$ & $2.38$ & $[0.90, \  7.27]$ \\
		$\PE_n(r_{\opt}) = 0.7463$ & UE & $11$ & $[2, \ 47]$ &  $2.81$ & $[0.91, \ 10.54]$ \\
		& IC & $47$ & $[45, \ 47]$  & $31.31$ & $[4.04, \ 176.27]$\\
		\midrule
		Logarithmic & Oracle & -- & -- & $1.00$ & $[0.89, \  1.21]$ \\
		$r_{\opt} = 6$ & CV & $9$ & $[2, \  41]$ & $1.42$ & $[0.91, \  4.47]$ \\
		$\PE_n(r_{\opt}) = 0.9714$ & UE & $10$ & $[2, \ 46]$ &  $3.61$ & $[0.91, \ 12.26]$ \\
		& IC & $47$ & $[46, \ 47]$  & $27.77$ & $[4.09, \ 117.36]$\\
		\bottomrule
	\end{tabular}
\end{table}

\begin{table}[htbp]
    \centering
	\caption{Estimated resolution and prediction error when $n = 200$, with $\tau^2=\frac{1}{2}$}\label{tab:tau2_half_n200}
	\begin{tabular}{cccccc}
		\toprule
		Type / $r_{\opt}$ / $\PE_n(r_{\opt})$ & Method & $\hat{R}$ & $95\%$ QR & std. $\PE_n(\hat{R})$ &  $95\%$ QR  \\
		\midrule
		Exponential & Oracle & -- & -- & $1.00$ & $[0.97, \  1.04]$ \\
		$r_{\opt} = 6$ & CV & $7$ & $[4, \  16]$ & $1.02$ & $[0.98, \  1.13]$ \\
		$\PE_n(r_{\opt}) = 0.5208$ & UE & $7$ & $[4, \ 16]$ &  $1.02$ & $[0.98, \ 1.12]$ \\
		&  IC & $197$ & $[196, \ 197]$  & $174.69$ & $[19.84, \ 691.16]$\\
		\midrule
		Polynomial & Oracle & -- & -- & $1.00$ & $[0.95, \  1.07]$ \\
		$r_{\opt} = 17$ & CV & $18$ & $[8, \  34]$ & $1.22$ & $[0.96, \  1.13]$ \\
		$\PE_n(r_{\opt}) = 0.6108$ & UE & $19$ & $[8, \ 35]$ &  $1.35$ & $[0.96, \ 1.13]$ \\
		& IC & $197$ & $[196, \ 197]$  & $117.58$ & $[16.98, \ 485.41]$\\
		\midrule
		Logarithmic & Oracle & -- & -- & $1.00$ & $[0.95, \  1.07]$ \\
		$r_{\opt} = 17$ & CV & $19$ & $[7, \  39]$ & $1.03$ & $[0.96, \  1.16]$ \\
		$\PE_n(r_{\opt}) = 0.8085$ & UE & $20$ & $[7, \ 42]$ &  $1.76$ & $[0.96, \ 1.20]$ \\
		& IC & $197$ & $[196, \ 197]$  & $152.57$ & $[17.61, \ 799.91]$
		\\
		\bottomrule
	\end{tabular}
\end{table}

Tables \ref{tab:tau2_half_n50} and \ref{tab:tau2_half_n200} continue the comparisons with the training set sizes $50$ and $200$, respectively. 
In both tables, the search range of the resolution level $[0,\overline{r}_n]$, is set to be $[0, n-3]$. 
In both tables, the first column describes the decay rate of the resolution bias, the theoretical optimal resolution $r_{\opt}$ minimizing the prediction error in \eqref{eq:pred_error_linear}, and the corresponding minimum prediction error. The second column describes the method, where ``Oracle'' means using the theoretical optimal $r_{\opt}$ for prediction. 
The third and fourth columns show the average and $95\%$ quantile range (QR) of the estimated optimal resolution using different methods over 500 simulated data sets. 
The fifth and sixth columns show the average and $95\%$ quantile range of the average prediction error, standardized (std.)  by the oracle error $\PE_n(r_\opt)$, 
using the prediction function $g(\vec{\bm{x}}_{r}, \hat{\bm{\theta}}_r)$ 
at the estimated resolution $r$ over 500 simulated data sets. 

Tables \ref{tab:tau2_half_n50} and \ref{tab:tau2_half_n200} further confirm our theoretical calculations in Section~\ref{sec:simulation_tech} that both cross validation and unbiased estimation perform much better than the information criterion, which overestimates the optimal resolution dramatically.
Both tables also indicate that the optimal resolutions under polynomial decay and logarithmic decay are very similar, which some may consider as contradicting Theorem \ref{th:cont}, which  suggests that the optimal resolutions under exponential, polynomial and Logarithmic decay resolution biases are, respectively, 
in the increasing orders of $\log(n)$, $n^{1/2}$ and $n/\log(n)$. 
However,  we must keep in mind that these asymptotic rates are \textit{asymptotic}.  To illustrate this point, we plot in 
Figure \ref{fig:exact_optimal_rate_linear}(a) the optimal resolution against the sample size for resolution biases of different decay rates. We see when $n$ is larger than $250$, the optimal resolution under logarithmic decay is larger than that under polynomial decay, and they follow two increasingly distinctive curves as $n$ increases. 
Furthermore, the optimal resolutions are approximately linear functions of their corresponding rate-optimal ones, and some simple linear regression fitting can help us understand the coefficients before these rates. We use the exponential decay case as an example. 
Figure \ref{fig:exact_optimal_rate_linear}(b) plots the optimal resolution against the rate $\log(n)$ suggested by Theorem \ref{th:cont}, with a fitted regression line using ordinary least squares. 
From Figure \ref{fig:exact_optimal_rate_linear}(b), the optimal resolution is roughly linear in $\log(n)$, with the regression line $r \approx 1.7 + \log(n)$. These results also further help confirming Theorem \ref{th:cont}. 

\begin{figure}[ht]
	\centering
	\begin{subfigure}{.39\textwidth}
		\centering
		\includegraphics[width=1\linewidth]{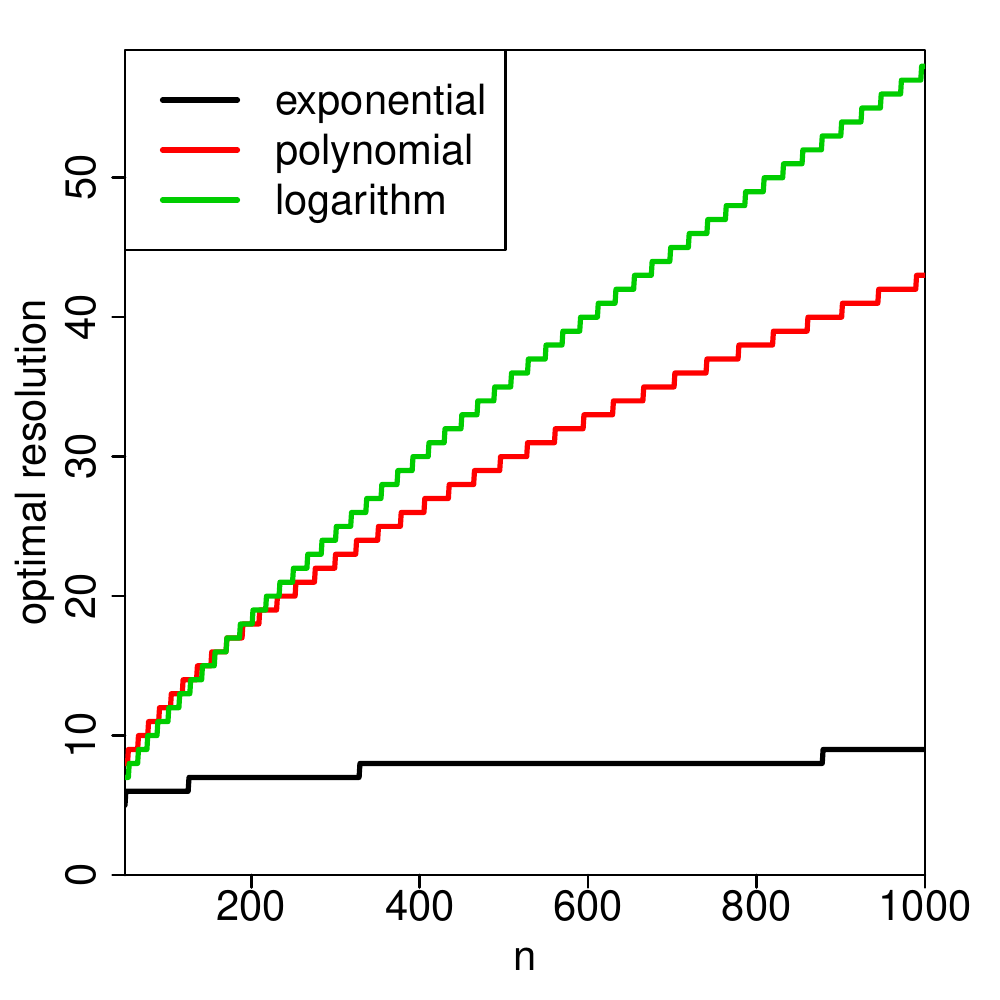}
		\caption{\centering}
	\end{subfigure}%
	\begin{subfigure}{.39\textwidth}
		\centering
		\includegraphics[width=1\linewidth]{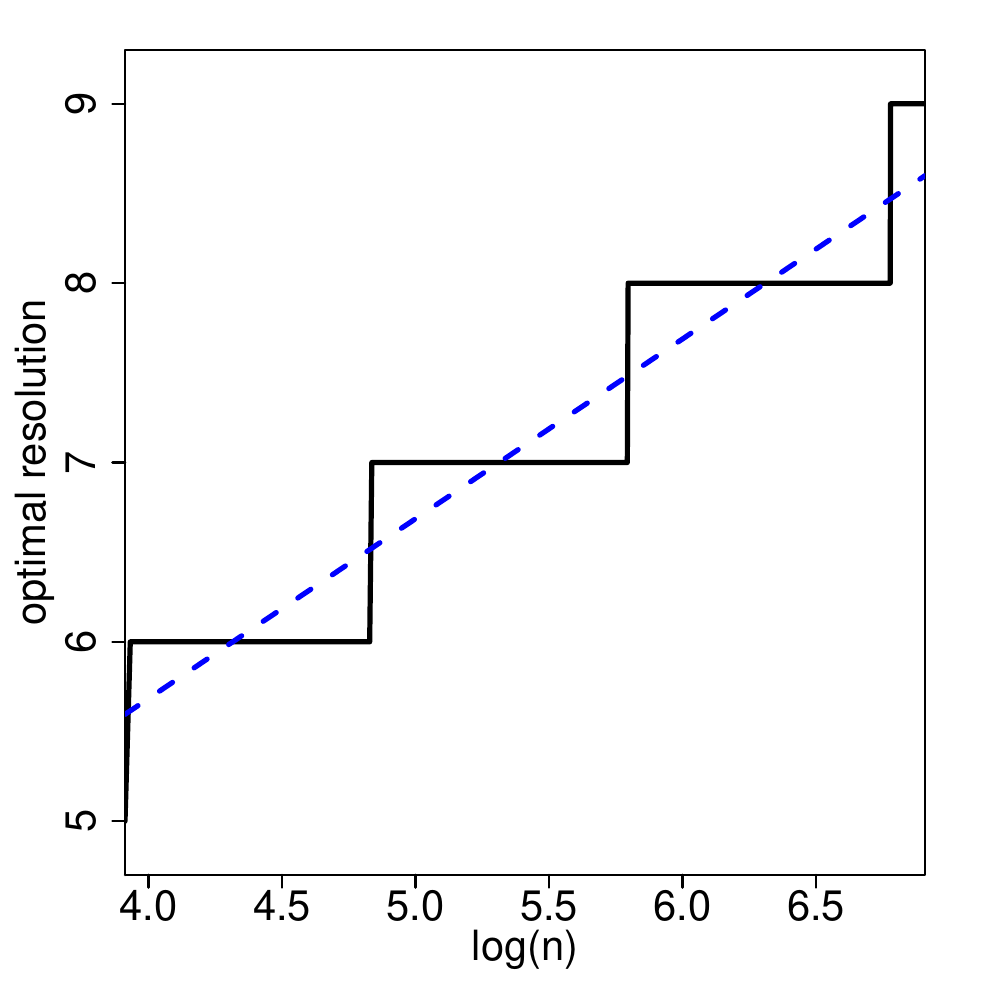}
		\caption{}
	\end{subfigure}
	\caption{
	Optimal resolution as sample size increases, with varying decay rates for the resolution bias, under the linear model with $\tau^2 = 0.5$.
	(a) plots the optimal resolution against the sample size for exponential, polynomial and logarithmic decay rates of the resolution bias. 
	(b) plots the optimal resolution against the rate suggested from Theorem \ref{th:cont} (i.e., $\log(n)$) for the exponential decay resolution bias, with a fitted linear regression line. 
	}\label{fig:exact_optimal_rate_linear}
\end{figure}   

In terms of prediction error, Tables \ref{tab:tau2_half_n50} and \ref{tab:tau2_half_n200} demonstrate that cross validation and the unbiased estimation lead to much smaller prediction error than the information criterion, and cross validation seems to slightly outperform unbiased estimation. 
We observe that under cross validation or unbiased estimation, occasionally 
the average of the standardized prediction error (SPE) can be larger than its $97.5\%$ quantile. 
This is because the estimated resolution can be close to $n$, although with a very small probability, making SPE heavy-tail.
Specifically, Figure \ref{fig:scatterhist_std_pe} 
shows the scatter plot of the SPE using UE versus that using CV, as well as their histograms, from the 500 simulated training sets. 
It shows that the SPE can take very large values but only occasionally, and CV seems to be more robust than UE. 
Under exponential decay, all the estimated resolutions from either CV or UE are below 27; 
under polynomial decay, most estimated resolutions are below 53, while CV has one exception with value 197 and UE has two exceptions both with value 197; 
under logarithmic decay, most estimated resolutions are below 64, while UE has five exceptions with values 196 and 197.
In practice, we can avoid 
the extreme values of SPE
by restricting $\overline{r}_n$, the upper limit of our search for optimal $R_n$, to a smaller number compared to $n$. 

More importantly, 
it is worthwhile to also pay attention to the entire estimated prediction error curve from CV or UE, instead of only focusing on the resolution minimizing it. 
For example, 
Figure \ref{fig:est_pe_one_iteration} shows the logarithm of the estimated prediction error at each resolution using CV and UE from one simulated training set of size 50, where in (a) the data are generated from polynomial decay with intrinsic variance $1/2$ and in (b) the data are generated from exponential decay with zero intrinsic variance. 
The latter case will be discussed shortly. 
From Table \ref{tab:tau2_half_n50} and \ref{tab:tau_0_n50}, 
the corresponding optimal resolutions are respectively 7 and 47. 
Although in both Figure \ref{fig:est_pe_one_iteration}(a) and (b) the estimated resolutions have the same value 47 (i.e., the maximum resolution under search), 
the patterns of the estimated prediction error as a function of the resolution $r$ are very different. 
In particular, the estimated prediction error in (b) monotonically decreases in $r$, while that in (a) shows an approximately U shape excluding the last point with a big drop.

\begin{figure}[ht]
	\centering
	\begin{subfigure}{.33\textwidth}
		\centering 
		\includegraphics[width=1\linewidth]{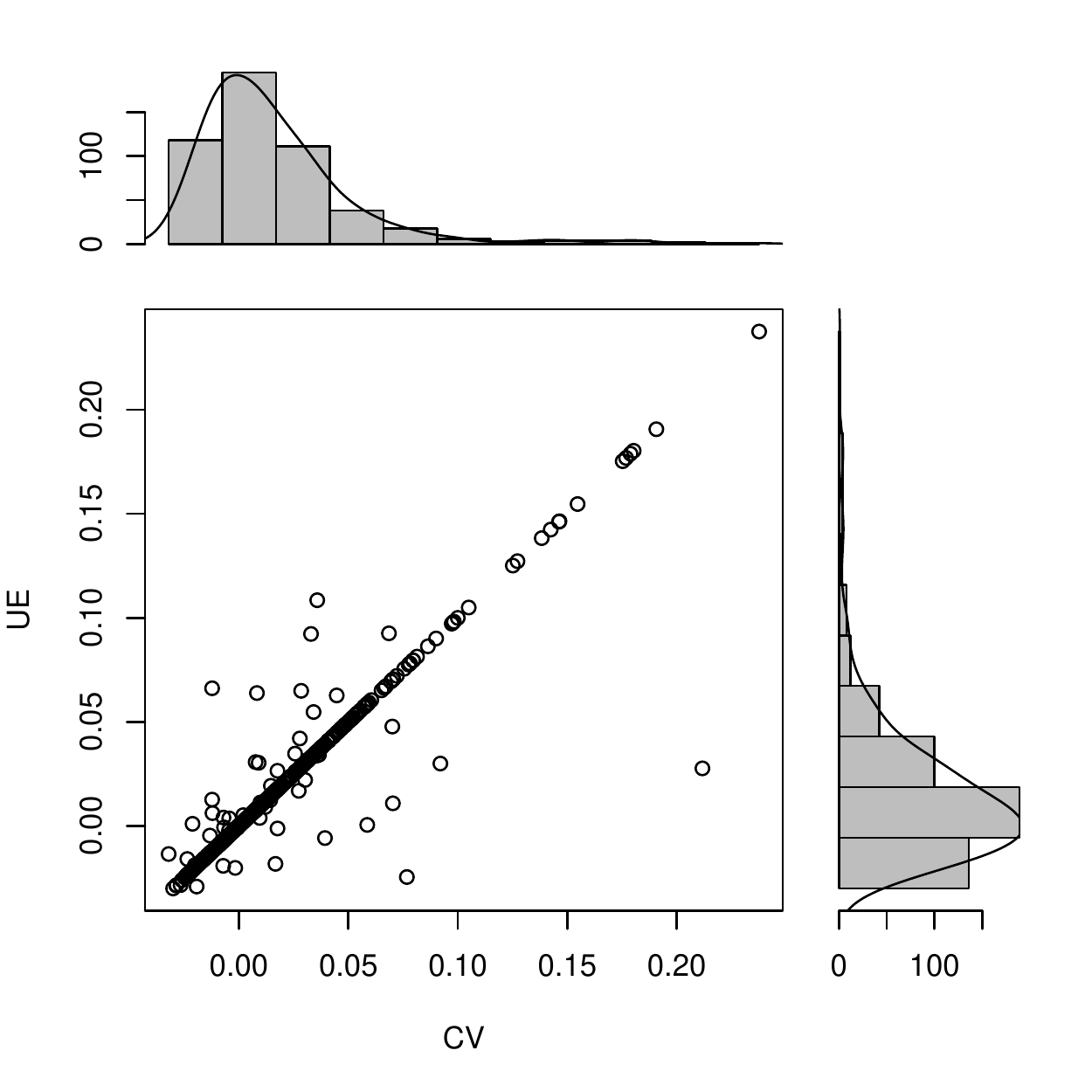}
		\caption{Exponential, $\tau^2=\frac{1}{2}$}
	\end{subfigure}%
	\begin{subfigure}{.33\textwidth}
		\centering
		\includegraphics[width=1\linewidth]{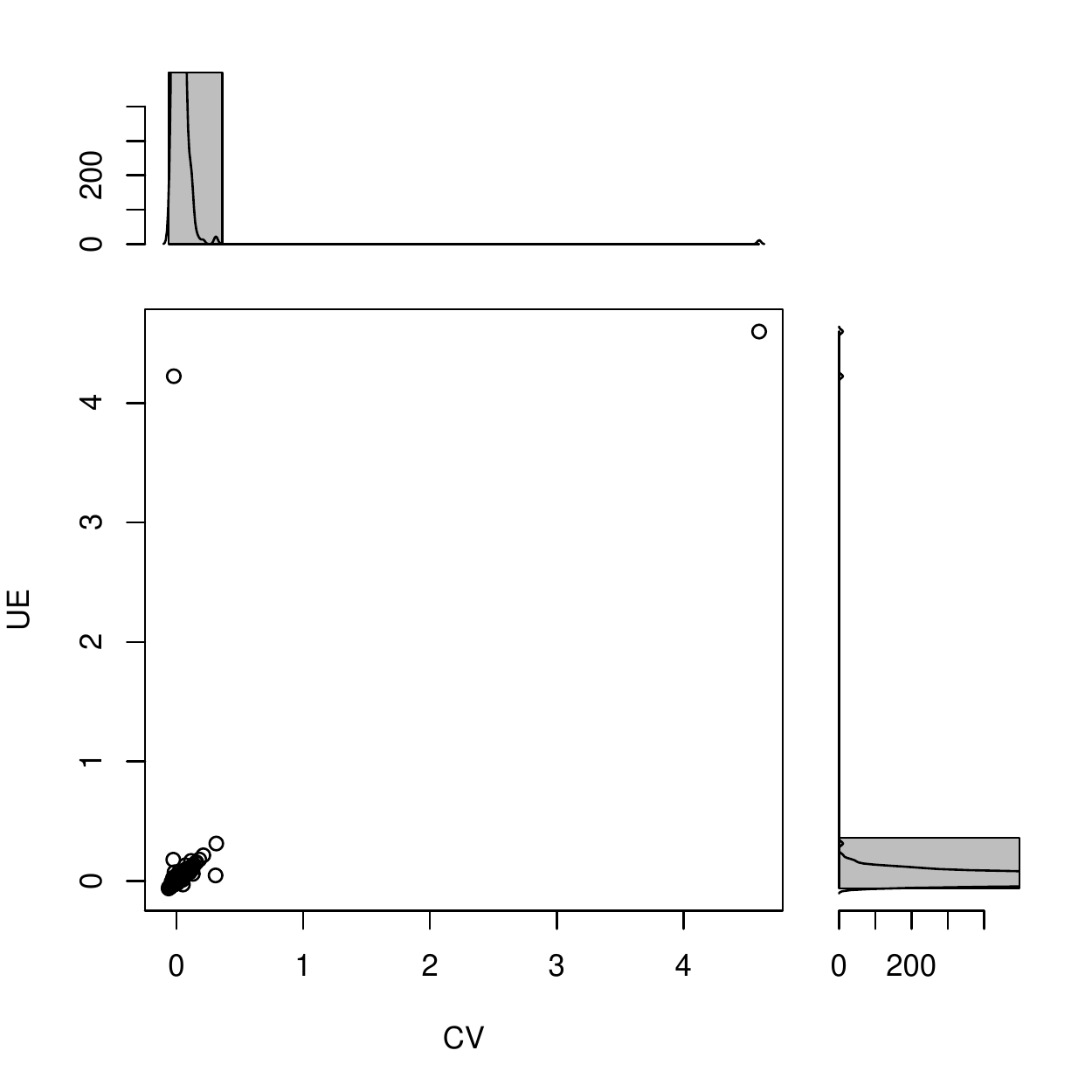}
		\caption{Polynomial, $\tau^2=\frac{1}{2}$}
	\end{subfigure}%
	\begin{subfigure}{.33\textwidth}
		\centering
		\includegraphics[width=1\linewidth]{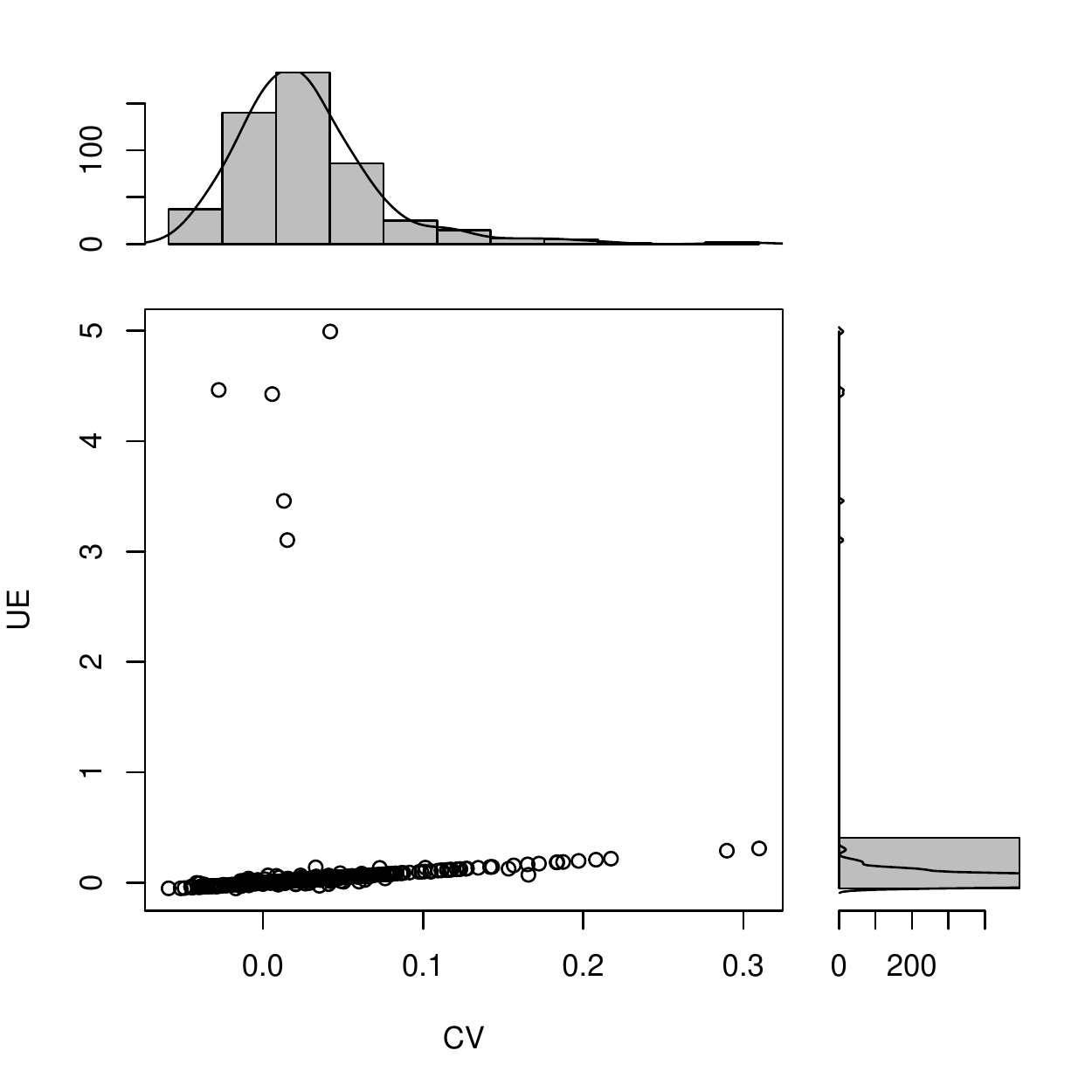}
		\caption{Logarithmic,$\tau^2=\frac{1}{2}$}
	\end{subfigure}
	\caption{Scatter plots of the logarithm of estimated prediction errors from CV and UE, as well as their histograms, from 500 simulated data sets under different decay rates of the resolution bias and $1/2$ intrinsic variance $\tau^2$.}\label{fig:scatterhist_std_pe}
\end{figure}

\begin{figure}[ht]
	\centering
	\begin{subfigure}{.39\textwidth}
		\centering
		\includegraphics[width=1\linewidth]{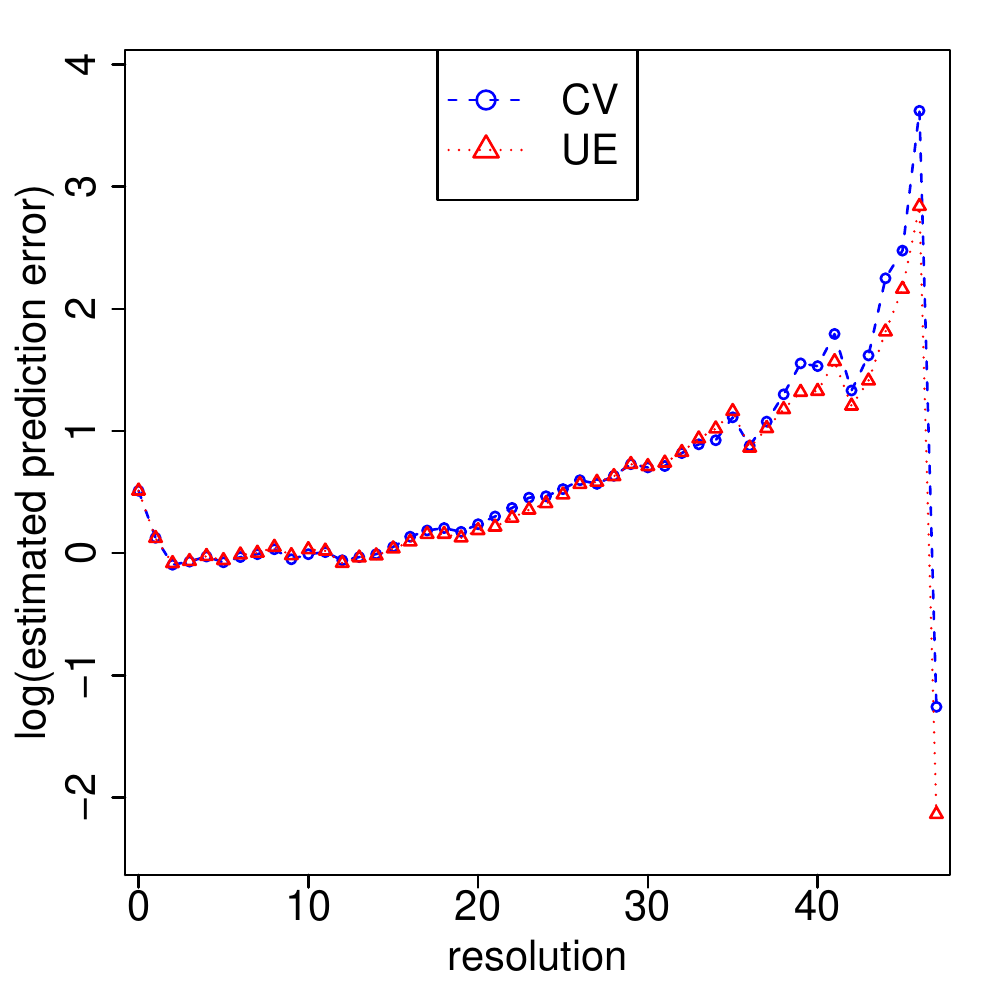}
		\caption{\centering Polynomial, $\tau^2=\frac{1}{2}$}
	\end{subfigure}%
	\begin{subfigure}{.39\textwidth}
		\centering
		\includegraphics[width=1\linewidth]{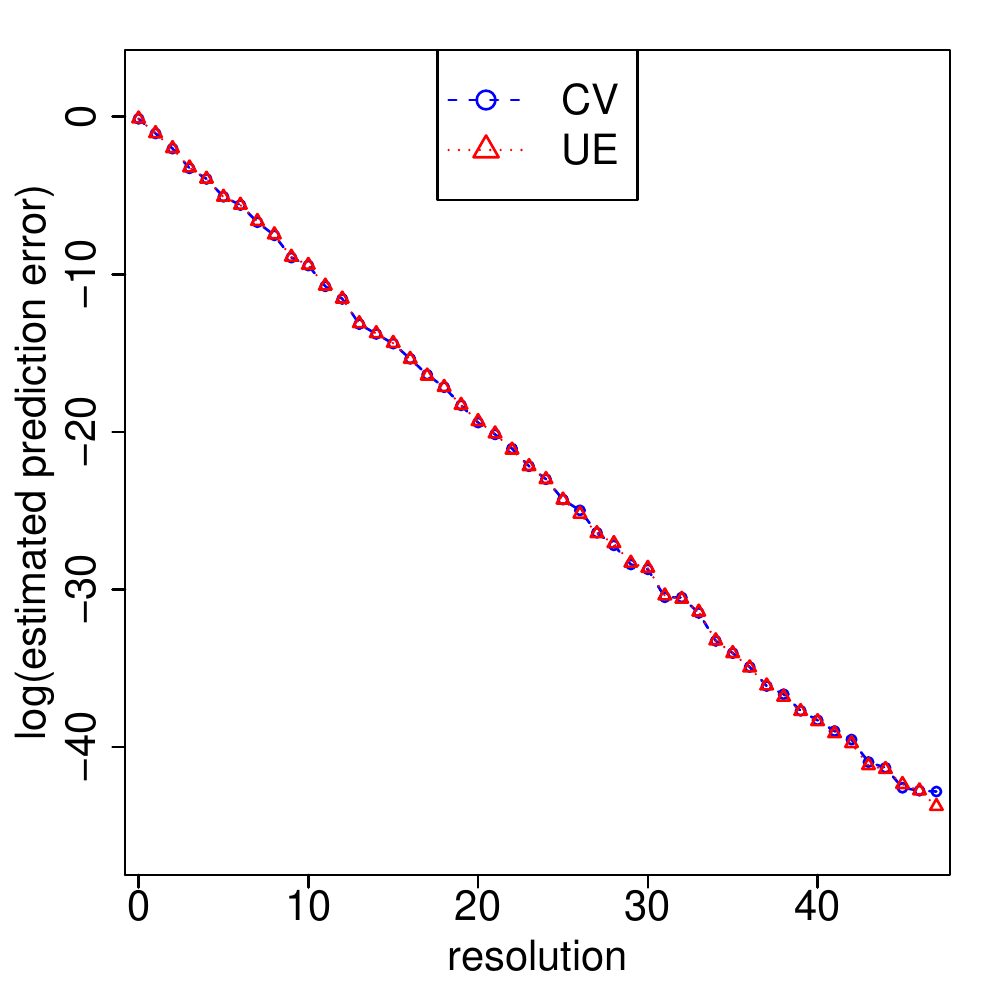}
		\caption{Exponential, $\tau^2=0$}
	\end{subfigure}
	\caption{Logarithms of the estimated prediction errors from CV and UE from one simulated data set, under polynomial decay with $1/2$ intrinsic variance and exponential decay with zero intrinsic variance. 
	}\label{fig:est_pe_one_iteration}
\end{figure}  

\begin{table}[htbp]
    \centering
	\caption{Estimated resolution and prediction error when $n = 50$, with $\tau^2=0$.}\label{tab:tau_0_n50}
	\begin{tabular}{cccccc}
		\toprule
		Type / $r_{\opt}$ / $\PE_n(r_{\opt})$ & Method & $\hat{R}$ & $95\%$ QR & std. $\PE_n(\hat{R})$ &  $95\%$ QR  \\
		\midrule
		Exponential & Oracle & -- & -- & $1.08$ & $[0.12, \  4.38]$ \\
		$r_{\opt} = 47$ & CV & $46$ & $[44, \  47]$ & $1.91$ & $[0.14, \  9.52]$ \\
		$\PE_n(r_{\opt}) = 1.90 \times 10^{-19}$ & UE & $46$ & $[45, \ 47]$ &  $1.21$ & $[0.13, \ 5.08]$ \\
		&  IC & $47$ & $[47, \ 47]$  & $1.09$ & $[ 0.12 , \ 4.38]$\\
		\midrule
		Polynomial & Oracle & -- & -- & $0.99$ & $[0.71, \  1.48]$ \\
		$r_{\opt} = 23$ & CV & $27$ & $[13, \  47]$ & $1.57$ & $[0.75, \  6.20]$ \\
		$\PE_n(r_{\opt}) = 0.0816$ & UE & $28$ & $[13, \ 47]$ &  $1.86$ & $[0.76, \ 8.07]$ \\
		& IC & $47$ & $[46, \ 47]$  & $10.78$ & $[1.46, \ 54.04]$\\
		\midrule
		Logarithmic & Oracle & -- & -- & $1.00$ & $[0.83, \  1.34]$ \\
		$r_{\opt} = 12$ & CV & $15$ & $[4, \  44]$ & $1.51$ & $[0.87, \  6.47]$ \\
		$\PE_n(r_{\opt}) = 0.357$ & UE & $16$ & $[5, \ 47]$ &  $2.20$ & $[0.87, \ 13.60]$ \\
		& IC & $47$ & $[46, \ 47]$  & $21.78$ & $[2.79, \ 109.50]$\\
		\bottomrule
	\end{tabular}
\end{table} 
We remark here that as reviewed in Section~\ref{sec:sieve}, mathematically, the construction of MR is essentially the same as constructing a sieve.
It is therefore no surprise that our findings here echo some findings in the literature of sieve methods. For example, as pointed out by many researchers, 
compared to AIC, 
the cross-validation is often strikingly more effective for sieve methods and for other smoothing problems \citep{Stone1978, Scott1981, Utreras1979, wahba1981, Geman1982}, yet its theoretical properties are not entirely clear.  

The bottom row of Figure \ref{fig:estimate_pred_loss} shows the estimated prediction errors when $\tau^2=0$; everything else is the same as for the top row. Similarly, 
Table \ref{tab:tau_0_n50} is the counterpart of Table~\ref{tab:tau2_half_n50} but with $\tau^2=0$. Compared to the results with $\tau^2=0.5$, we see the general patterns are similar except in the case with exponential decay resolution bias, where the prediction error decreases monotonically with $r$. Hence the larger the resolution the better the prediction, as suggested by Theorem \ref{th:contzero}.  It also (accidentally) makes the IC procedure acceptable in this case because of its preference for the largest possible $r$.

\subsection{Technical details for prediction error estimate in Appendix \ref{sec:simu_example}}\label{sec:simulation_tech} 

To derive an unbiased estimator for $\PE_n(r)$,  we first calculate the expectation of the mean squared error at each resolution $r$. 
By definition, the fitted residual at resolution $r$ is 
$
	\tilde{\bm{Y}} - \tilde{\bm{X}}_{r}\hat{\bm{\theta}}_{r} = 
	\tilde{\bm{Y}} - \bm{H}_r \tilde{\bm{Y}}
	= 
	\left( \bm{I}_r - \bm{H}_r \right)
	(\tilde{\bm{Y}} - \tilde{\bm{X}} \bm{\theta}_{r}^*  ), 
$
where the last equality follows from a property of the projection matrix $\bm{H}_r$. 
By definition,  conditional on $\tilde{\bm{X}}$, 
$\tilde{\bm{Y}} - \tilde{\bm{X}} \bm{\theta}_{r}^* $ follows a multivariate normal distribution with mean zero and covariance matrix $\{\tau^2 + A(r)\} \cdot \bm{I}_n$. 
This implies that $\hat{\sigma}^2$ has the following conditional expectation: 
\begin{align}
	\E_n \left( \hat{\sigma}^2 \big| \tilde{\bm{X}} \right)
	& = \frac{1}{n} 
	\E_n \left(
	\left\| \tilde{\bm{Y}} - \tilde{\bm{X}}\hat{\bm{\theta}}_{r} \right\|_2^2
	\big| \tilde{\bm{X}}
	\right)
	= 
	\frac{1}{n} 
	\E_n \left[
	\left\|
	\left( \bm{I}_r - \bm{H}_r \right)
	(\tilde{\bm{Y}} - \tilde{\bm{X}} \bm{\theta}_{r}^*  )
	\right\|^2
	\big| \tilde{\bm{X}}
	\right]\nonumber
	\\
	& = 
	\frac{1}{n}
	\E_n \left[
	\text{tr} 
	\left\{
	\left( \bm{I}_r - \bm{H}_r \right)
	(\tilde{\bm{Y}} - \tilde{\bm{X}} \bm{\theta}_{r}^*  )
	(\tilde{\bm{Y}} - \tilde{\bm{X}} \bm{\theta}_{r}^*  )^\top 
	\left( \bm{I}_r - \bm{H}_r \right)
	\right\}
	\big| \tilde{\bm{X}}
	\right] \nonumber\\
	& = 
	\frac{1}{n}
	\text{tr} 
	\left\{
	\left( \bm{I}_r - \bm{H}_r \right)
	\cdot
	\E_n \left[
	(\tilde{\bm{Y}} - \tilde{\bm{X}} \bm{\theta}_{r}^*  )
	(\tilde{\bm{Y}} - \tilde{\bm{X}} \bm{\theta}_{r}^*  )^\top 
	\bigg| \tilde{\bm{X}}
	\right]
	\cdot
	\left( \bm{I}_r - \bm{H}_r \right)
	\right\}\nonumber\\
	& = 
	\frac{1}{n}\left[\tau^2+A(r)\right]
	\text{tr} 
	\left\{
	\left( \bm{I}_r - \bm{H}_r \right)
	\right\} = 
	\frac{n-r-1}{n}\left[\tau^2+A(r)\right], \label{eq:mean_mse_linear}
\end{align}
because  $\text{tr}(\bm{H}_r)=r+1$; recall $\dim(\bm{\theta}_r)=r+1.$ Simple algebra then shows $\UE_n(r)$ of \eqref{eq:UE} is unbiased for $\PE_n(r)$ of \eqref{eq:pred_error_linear}.

To study the biases in $\CV_n(r)$ and $\IC_n(r)$ for estimating  $\PE_n(r)$, we first note that from the discussion in Section \ref{sec:cross}, 
\begin{align}\label{eq:mean_cv_linear}
	\E_n\left[\CV_n(r)\right] & = \PE_{n-1}(r) = 
	\left[
	\tau^2 + A(r)
	\right]
	\frac{n(n-3)}{(n-1)(n-r-3)}.
\end{align}
From \eqref{eq:IC} and \eqref{eq:mean_mse_linear}, we have
\begin{align}\label{eq:mean_ic_linear}
    \E_n \left[\IC_n(r) \right]
    = \E_n [\hat{\sigma}_r^2] \cdot \frac{n+2(r+1)}{n}
    =
    \left[\tau^2+A(r)\right]
    \frac{(n-r-1)(n+2r+2)}{n^2}.
\end{align}
Consequently, 
from \eqref{eq:mean_cv_linear}, we have 
\begin{align*}
     \E_n\left[\CV_n(r)\right] - \PE_n(r) 
	= 
	\left[
	\tau^2 + A(r)
	\right]
	\left[
	\frac{1 - 3/n}{(1-1/n)\{1-(r+3)/n\}}
	- 
	\frac{(1+1/n)(1-2/n)}{1-(r+2)/n}
	\right].
\end{align*}
Using the Taylor expansion $(1-x)^{-1}=1+x+O(x^2)$ when $x=o(1)$, we can easily verify that the above expression is $[\tau^2 + A(r)]^2 O(n^{-2})$
when $r=O(1)$. It follows that
\begin{align*}
    \frac{\E_n\left[\CV_n(r)\right]}{\PE_n(r)}
    & = 
    1 + 
    \frac{O(n^{-2})}{\frac{(n+1)(n-2)}{n(n-r-2)}} 
    = 1 + O(n^{-2}).
\end{align*}
Furthermore, when $0 \le r\le n-4$, we have
\begin{align*}
    \frac{\E_n\left[\CV_n(r)\right]}{\PE_n(r)}
    = 
    \frac{n^2(n-3)}{(n^2-1)(n-2)}
    \cdot
    \frac{n-r-2}{n-r-3}
    = 1 + \frac{1}{n-r-3} + o(1).
\end{align*}
Similarly, from \eqref{eq:mean_ic_linear}, when $r=O(1)$, 
\begin{align*}
    & \quad \ \E_n \left[\IC_n(r) \right] - \PE_{n}(r)
     = 
    \left[\tau^2+A(r)\right]
    \left[
    \left(1 - \frac{r+1}{n} \right)
    \left( 1 + \frac{2r+2}{n} \right)
    -
    \frac{(1+1/n)(1-2/n)}{1-(r+2)/n}
    \right]\\
    & = 
    \left[\tau^2+A(r)\right]
    \left[
    \left( 1 - \frac{r+1}{n} + \frac{2r+2}{n} \right)
    - 
    \left( 1 + \frac{1}{n} - \frac{2}{n} + \frac{r+2}{n} \right)
    +
    O(n^{-2})
    \right]\\
    & = 
    \left[\tau^2+A(r)\right] \cdot
    O(n^{-2}),
\end{align*}
which implies that 
\begin{align*}
    \frac{\E_n \left[\IC_n(r) \right]}{\PE_{n}(r)}
    & = 
    1 + 
    \frac{O(n^{-2})}{\frac{(n+1)(n-2)}{n(n-r-2)}} 
    = 1 + O(n^{-2}).
\end{align*}
However, when $r = n - o(n)$, 
\begin{align*}
    \frac{ \E_n \left[\IC_n(r) \right]}{\PE_{n}(r)}
    & = 
    \frac{(n-r-1)(n+2r+2)}{n^2} \cdot \frac{n(n-r-2)}{(n+1)(n-2)}
    \\
    & = 
    \frac{o(n) \cdot O(n)}{n^2} \cdot \frac{n \cdot o(n)}{(n+1)(n-2)}
    = o(1), 
\end{align*}
which means $\IC_n(r)$ tends to grossly underestimate $\PE_n(r)$ when $r$ approaches $n$.

\end{document}